   \def\MR#1{}
\long\def\@savemarbox#1#2{\global\setbox#1\vtop{\hsize\marginparwidth 
  \@parboxrestore\tiny\raggedright #2}}
\renewcommand*{\backref}[1]{}
\renewcommand*{\backrefalt}[4]{
  \ifcase #1
  [No citations.]
  \or [#2]
  \else [#2]
  \fi }
\numberwithin{equation}{section}
\theoremstyle{plain}
\newtheorem{theorem}[equation]{Theorem}
\newtheorem{corollary}[equation]{Corollary}
\newtheorem{lemma}[equation]{Lemma}
\newtheorem*{namedtheorem}{\theoremname}
\newcommand{\theoremname}{testing}
\newenvironment{named}[1]{\renewcommand{\theoremname}{#1}\begin{namedtheorem}}{\end{namedtheorem}}
\theoremstyle{definition}
\newtheorem{definition}[equation]{Definition}
\newtheorem{remark}[equation]{Remark}
\newcommand{\calT}{{\mathcal{T}}}
\newcommand{\cut}{\backslash\backslash}
\newcommand{\calH}{\mathcal{H}}
\newcommand{\calB}{\mathcal{B}}
\newcommand{\calX}{\mathcal{X}}
\def\chaptermark#1{}
\def\chapter{%
  \if@openright\cleardoublepage\else\clearpage\fi
  \thispagestyle{plain}\global\@topnum\z@
  \@afterindenttrue \secdef\@chapter\@schapter}
\def\@chapter[#1]#2{\refstepcounter{chapter}%
  \ifnum\c@secnumdepth<\z@ \let\@secnumber\@empty
  \else \let\@secnumber\thechapter \fi
  \typeout{\chaptername\space\@secnumber}%
  \def\@toclevel{0}%
  \ifx\chaptername\appendixname \@tocwriteb\tocappendix{chapter}{#2}%
  \else \@tocwriteb\tocchapter{chapter}{#2}\fi
  \chaptermark{#1}%
  \addtocontents{lof}{\protect\addvspace{10\p@}}%
  \addtocontents{lot}{\protect\addvspace{10\p@}}%
  \@makechapterhead{#2}\@afterheading}
\def\@schapter#1{\typeout{#1}%
  \let\@secnumber\@empty
  \def\@toclevel{0}%
  \ifx\chaptername\appendixname \@tocwriteb\tocappendix{chapter}{#1}%
  \else \@tocwriteb\tocchapter{chapter}{#1}\fi
  \chaptermark{#1}%
  \addtocontents{lof}{\protect\addvspace{10\p@}}%
  \addtocontents{lot}{\protect\addvspace{10\p@}}%
  \@makeschapterhead{#1}\@afterheading}
\newcommand\chaptername{Chapter}
\def\@makechapterhead#1{\global\topskip 7.5pc\relax
  \begingroup
  \fontsize{\@xivpt}{18}\bfseries\centering
    \ifnum\c@secnumdepth>\m@ne
      \leavevmode \hskip-\leftskip
      \rlap{\vbox to\z@{\vss
          \centerline{\normalsize\mdseries
              \uppercase\@xp{\chaptername}\enspace\thechapter}
          \vskip 3pc}}\hskip\leftskip\fi
     #1\par \endgroup
  \skip@34\p@ \advance\skip@-\normalbaselineskip
  \vskip\skip@ }
\def\@makeschapterhead#1{\global\topskip 7.5pc\relax
  \begingroup
  \fontsize{\@xivpt}{18}\bfseries\centering
  #1\par \endgroup
  \skip@34\p@ \advance\skip@-\normalbaselineskip
  \vskip\skip@ }
\def\appendix{\par
  \c@chapter\z@ \c@section\z@
  \let\chaptername\appendixname
  \def\thechapter{\@Alph\c@chapter}}
\newcounter{chapter}
\newif\if@openright
\title{Some fast algorithms for curves in surfaces}
\author{Marc Lackenby}
\address{Mathematical Institute, University of Oxford, \newline Woodstock Road, Oxford OX2 6GG, United Kingdom}
\thanks{The author was partially supported by EPSRC grant EP/Y004256/1. For the purpose of open access, the author has applied a CC BY public copyright licence to any author accepted manuscript arising from this submission.}
\begin{document}

\begin{abstract} 
We present some algorithms that provide useful topological information about curves in surfaces. One of the main algorithms
computes the geometric intersection number of two properly embedded 1-manifolds $C_1$ and $C_2$ in a compact orientable surface $S$. The surface $S$ is presented via a triangulation or a handle structure,
and the 1-manifolds are given in normal form via their normal coordinates. The running time is bounded above by a polynomial function
of the number of triangles in the triangulation (or the number of handles in the handle structure), and the logarithm of the weight of $C_1$ and $C_2$.
This algorithm represents an improvement over previous work, since its running time depends polynomially on the size of the triangulation of $S$ and it can 
deal with closed surfaces, unlike many earlier algorithms. Another algorithm, with similar bounds on its running time, can determine whether $C_1$ and $C_2$ are isotopic.
We also present a closely related algorithm that can be used to place a standard 1-manifold into normal form.
\end{abstract}
\maketitle

\section{Introduction}
\label{Sec:Intro}
Surfaces occupy a central position in low-dimensional topology and geometry. They are mostly studied by means of properly embedded 1-manifolds (that is, simple closed curves and arcs, that are pairwise disjoint, and with the endpoints of the arcs in the boundary of the surface). A foundational result is that two properly embedded essential 1-manifolds $C_1$ and $C_2$ in a compact surface $S$ can be isotoped into an essentially unique position where $|C_1 \cap C_2|$ is minimal. The \emph{geometric intersection number} $i(C_1, C_2)$ is then $|C_1 \cap C_2|$. This minimal position for $C_1$ and $C_2$ is easy to compute in principle, because when $C_1$ and $C_2$ are not in minimal position, then there is a bigon or half-bigon (as shown in Figure \ref{Fig:BigonHalfBigon}) in their complement, and this can be used to reduce the number of points of intersection. However, when $C_1$ and $C_2$ intersect many times, the computation of their geometric intersection number is not at all straightforward. Nevertheless, the following result shows that it can be computed extremely efficiently.

\begin{theorem}[{\sc Geometric intersection number}]
\label{Thm:GeometricIntersectionNumber}
Let $\mathcal{T}$ be a triangulation for a compact orientable surface $S$. Let $C_1$ and $C_2$ be normal 1-manifolds properly embedded in $S$. Then there is an algorithm that provides the geometric intersection number of $C_1$ and $C_2$. The running time of this algorithm is bounded above by a polynomial function of $|\mathcal{T}|$, $\log w(C_1)$ and $\log w(C_2)$.
\end{theorem}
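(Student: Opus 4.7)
The plan splits $i(C_1,C_2)$ into two pieces: a \emph{raw} transverse intersection count $|C_1\cap C_2|$ of $C_1$ and $C_2$ as they currently sit in normal form (once a nesting of their endpoints on each edge has been fixed once and for all), and twice the number of bigons and half-bigons of $C_1\cup C_2$ in $S$ that remain to be removed to reach minimal position. The first piece is a local computation: in each triangle $\Delta$, the arcs of $C_k$ ($k=1,2$) fall into three parallel families indexed by the vertex of $\Delta$ they cut off, with multiplicities that are simple linear combinations of the three edge weights of $C_k$ along the sides of $\Delta$. Under a canonical nesting, the number of crossings inside $\Delta$ is a bilinear expression in these six multiplicities, so summing over triangles yields $|C_1\cap C_2|$ in time polynomial in $|\mathcal T|$, $\log w(C_1)$ and $\log w(C_2)$.

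The harder piece is the bigon correction. A naive bigon-by-bigon approach is hopeless, since the number of innermost bigons can be of order $w(C_1)w(C_2)$, which we cannot afford to enumerate. The idea, in the spirit of the Agol--Hass--Thurston technique for normal surfaces, is that because $C_1$ and $C_2$ are both normal, every innermost bigon lies in a \emph{parallel stack} of isotopic bigons, and the whole stack is determined by a base disk of combinatorial complexity $O(|\mathcal T|)$ together with a multiplicity written in binary. I would locate these stacks directly from the normal coordinates of $C_1$ and $C_2$, count the intersections absorbed by each stack, and then perform a bulk compression that updates the normal coordinates so as to decrease the total weight by a guaranteed constant factor. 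Iterating, after $O(\log(w(C_1)+w(C_2)))$ rounds the weights drop below a polynomial in $|\mathcal T|$, at which point the residual bigons can be removed directly.

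The main obstacle is establishing this constant-factor decrease: one must prove that whenever $C_1\cup C_2$ admits any bigon or half-bigon at all, some parallel stack detected from the normal coordinates absorbs a constant fraction of the current total weight, so that a single round of bulk compression always makes genuine geometric progress and the recursion cannot stall. This is essentially a combinatorial structure theorem for the bigons between two transverse normal 1-manifolds on a triangulated surface, phrased in terms of their normal coordinates. Once it is in place, correctness follows from a standard Reidemeister-II-style argument that a bigon removal decreases $|C_1\cap C_2|$ by exactly $2$ without changing the isotopy classes of $C_1$ and $C_2$, and the running-time bound follows by summing the per-round cost, each of which is polynomial in $|\mathcal T|$ and in the (decreasing) log-weights.
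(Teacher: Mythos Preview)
Your plan has a genuine gap at precisely the point you flag as ``the main obstacle''. You assert that every innermost bigon between two normal $1$-manifolds lies in a parallel stack whose base disk has combinatorial complexity $O(|\mathcal T|)$, and that removing such a stack decreases the total weight by a constant factor. Neither claim is established, and the first is false as stated: an innermost bigon between two normal curves can have weight comparable to $w(C_1)+w(C_2)$. (Take $C_1$ and $C_2$ isotopic simple closed curves in distinct normal positions; the two bigons between them each carry roughly half the total weight, and there is no ``stack'' of smaller bigons underneath.) Without a bound on the complexity of a single bigon you cannot locate it ``directly from the normal coordinates'' in time polylogarithmic in the weights, and without the constant-factor decrease the recursion does not terminate in $O(\log w)$ rounds. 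You also elide the re-normalisation step after a bulk bigon removal, which can itself increase weight and undo whatever progress you made.

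The paper avoids working with bigons between two general normal curves altogether. Instead it first changes the triangulation: using the Lackenby--Yazdi result ({\sc Simplify normal 1-manifold}), a polynomially long sequence of flips and twists converts $\mathcal T$ into a triangulation $\mathcal T'''$ in which $C_2$ is \emph{simplicial}, while tracking $C_1$ through the moves. Once $C_2$ lies in the $1$-skeleton, a bigon for $(C_1,C_2)$ behaves exactly like a simplifying disc for $C_1$ in the dual handle structure, and the paper's normalisation machinery ({\sc Minimal position for 1-manifolds}, Theorem~\ref{Thm:MinimalPosition}) removes bigons using the \emph{bigon number}, not the weight, as the progress measure; Lemma~\ref{Lem:BoundBigon} shows this number is $O(|\mathcal T|)$ from the outset. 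That is how the paper gets a polynomial bound without ever proving anything like your structure theorem.
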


In this result, and throughout this paper, we require the surface $S$ to be compact. It may be closed or have non-empty boundary.  In this result, we also require $S$ to be orientable, but in some later theorems, we can drop this hypothesis. 
Here, $|\mathcal{T}|$ is the number of triangles of $\mathcal{T}$. Recall that a 1-manifold $C$ in $S$ is \emph{normal} if it intersects each triangle in properly embedded arcs, each of which has endpoints in the interior of distinct edges of the triangle. The number $w(C)$ is the \emph{weight} of a 1-manifold $C$, which is its number of points of intersection with the 1-skeleton of $\calT$. Note that in the case where $\mathcal{T}$ is a 1-vertex triangulation for a closed surface $S$, then $|\mathcal{T}|$ is equal to $2 - 2\chi(S)$ and hence the upper bound on the running time depends polynomially on $\chi(S)$. In the case where $S$ has non-empty boundary and $\mathcal{T}$ has one vertex on each boundary component and no vertices in the interior of $S$, then the upper bound on the running time again depends polynomially on $\chi(S)$.

\begin{figure}[h]
\centering
\includegraphics[width=0.35\textwidth]{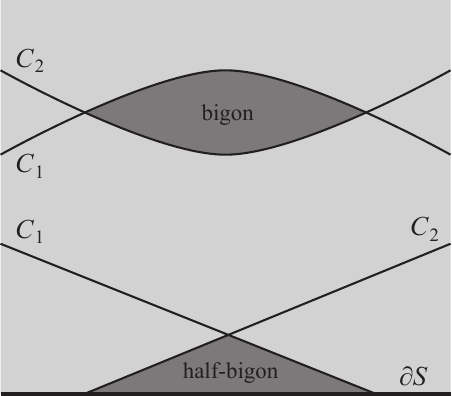}
\caption{A bigon and a half-bigon for two properly embedded 1-manifolds $C_1$ and $C_2$}
\label{Fig:BigonHalfBigon}
\end{figure}

The computation of geometric intersection number has been studied in detail by several authors, including Bell and Webb \cite{BellWebb}, Dynnikov \cite{Dynnikov},
Schaefer, Sedgwick and \u{S}tefankovi\u{c} \cite{SSS2}, Despr\'e and Lazarus \cite{DespreLazarus}, Dubois \cite{dubois} and Chang and de Mesmay \cite{ChangdeMesmay}. In Section \ref{Subsec:PreviousWork}, we will compare those works with ours, highlighting what is new in Theorem \ref{Thm:GeometricIntersectionNumber}.


In a similar vein, we have the following result.

\begin{theorem}[{\sc Isotopy of 1-manifolds}]
\label{Thm:IsotopyProblem}
Let $\mathcal{T}$ be a triangulation for a compact orientable surface $S$. Let $C_1$ and $C_2$ be normal essential 1-manifolds properly embedded in $S$. Then there is an algorithm that determines whether $C_1$ and $C_2$ are isotopic. The running time of this algorithm is bounded above by a polynomial function of $|\mathcal{T}|$, $\log w(C_1)$ and $\log w(C_2)$.
\end{theorem}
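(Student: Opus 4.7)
The plan is to reduce the isotopy question to a polynomial-size batch of geometric-intersection-number computations, each handled by \refthm{GeometricIntersectionNumber}. The structural fact I would invoke is classical: the isotopy class of an essential properly embedded 1-manifold $C$ in a compact orientable surface $S$ is determined by the sequence of intersection numbers $i(C,\gamma)$ as $\gamma$ ranges over any filling collection of essential simple closed curves and essential simple arcs in $S$. This is the integer-weight consequence of the injectivity of the intersection-number embedding of the space of measured laminations of $S$ into $\mathbb{R}^{\Gamma}$ associated to a filling collection $\Gamma$; two essential 1-manifolds have identical intersection numbers with every member of such a $\Gamma$ if and only if they are isotopic, and this handles components and multiplicities correctly because intersection numbers add over components.

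The first step is to construct, from the triangulation $\mathcal{T}$, a filling collection $\Gamma=\{\gamma_1,\ldots,\gamma_k\}$ of essential simple closed curves and, when $\partial S\neq\emptyset$, essential simple arcs, such that $k$ is polynomially bounded in $|\mathcal{T}|$ and each $\gamma_j$ is in normal form with $w(\gamma_j)$ also polynomially bounded in $|\mathcal{T}|$. A concrete candidate is a pants decomposition built greedily from the 1-skeleton of $\mathcal{T}$, augmented by a curve dual to each pants curve and (when $\partial S\neq\emptyset$) by a spanning family of essential arcs joining boundary components, all promoted to normal form. Checking that such a $\Gamma$ genuinely fills $S$ and meets the polynomial bounds is a combinatorial exercise on $\mathcal{T}$.

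Given $\Gamma$, the rest is immediate: for each $j\in\{1,\ldots,k\}$ and each $\ell\in\{1,2\}$, invoke \refthm{GeometricIntersectionNumber} on the pair $(C_\ell,\gamma_j)$. Each call runs in time polynomial in $|\mathcal{T}|$, $\log w(C_\ell)$ and $\log w(\gamma_j)=O(\log|\mathcal{T}|)$, and since $k$ is polynomial in $|\mathcal{T}|$, the total running time fits the stated bound. The algorithm outputs ``isotopic'' precisely when $i(C_1,\gamma_j)=i(C_2,\gamma_j)$ for every $j$; correctness is then furnished by the structural fact recalled above.

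I expect the principal obstacle to be the first step: producing a genuinely filling system $\Gamma$ with the requisite polynomial bounds on both $k$ and $w(\gamma_j)$, uniformly for closed surfaces and for surfaces with boundary, and with sufficiently many arcs in $\Gamma$ to detect any arc components of $C_1$ and $C_2$. Once $\Gamma$ is in hand, the remainder of the argument consists only of calls to \refthm{GeometricIntersectionNumber} and arithmetic on polynomial-size integer lists.
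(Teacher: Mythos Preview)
Your structural fact is not correct as stated: a \emph{filling} collection does not in general separate isotopy classes by intersection numbers. Already on the torus (or the once-punctured torus), the pair $\alpha=(1,0)$, $\beta=(0,1)$ fills, yet the curves $(1,1)$ and $(1,-1)$ satisfy $i((1,\pm 1),\alpha)=i((1,\pm 1),\beta)=1$ while being non-isotopic. The same phenomenon persists in higher genus: a pants decomposition together with one dual curve per pants curve fills, but cannot detect the sign of twisting. What you actually need is a \emph{parametrising} collection in the sense of the $9g-9$ theorem (or its analogue for surfaces with boundary): for each pants curve one must include two transversals differing by a twist, so that the twist coordinate can be recovered from the intersection data. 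Such a collection does exist with cardinality linear in $|\chi(S)|$ and with each member of weight polynomial in $|\calT|$, and once you have it the injectivity on $\mathcal{ML}(S)$ (hence on multicurves) is a genuine theorem. So your scheme is salvageable, but the word ``filling'' must be replaced throughout by the correct hypothesis, and you should cite or prove the parametrisation statement rather than asserting it.

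Even repaired, your route is quite different from the paper's. The paper does not compare $C_1$ and $C_2$ against a fixed test system; instead it first reduces to connected $C_1$, $C_2$, then uses Theorem~\ref{Thm:SimplifyNormal1Manifold} to retriangulate so that $C_2$ becomes simplicial, applies Theorem~\ref{Thm:MinimalPosition} to put $C_1$ in minimal position with respect to $C_2$ directly, and finally checks whether they are disjoint and cobound an annulus or square via Theorem~\ref{Thm:CutAlong1Manifold}. Your approach has the virtue of being a black-box reduction to Theorem~\ref{Thm:GeometricIntersectionNumber}; the paper's approach avoids having to construct and certify a parametrising system and instead reuses the machinery already built for Theorem~\ref{Thm:GeometricIntersectionNumber}.
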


A similar result was proved by Schleimer \cite[Theorem A.7]{Schleimer:Word}, who provided a new solution to the conjugacy problem for $\pi_1(S)$. Two simple closed curves represent conjugate elements of $\pi_1(S)$ if and only if they are isotopic. Schleimer's algorithm takes, as its input, `compressed' words in $\pi_1(S)$ and hence runs in polynomial time as a function of $\log w(C_1)$ and $\log w(C_2)$ with respect to a suitable triangulation $\calT$. But how the running time in Schleimer's algorithm depends on the triangulation $\mathcal{T}$ and, more generally on $\chi(S)$, is not explicitly addressed in \cite{Schleimer:Word}. In \cite[Theorem 2(a)]{SSS1}, another algorithm was given to determine whether two simple closed curves are isotopic, but it requires $S$ to have non-empty boundary and the dependence of its running time on $\chi(S)$ is unclear.

Our next main result gives an algorithm that places a 1-manifold into normal form. For simplicity, we initially consider the case of surfaces with a given triangulation. Later, we will alternatively endow our surfaces with a handle structure.

A 1-manifold $C$ properly embedded in a surface $S$ with a triangulation $\calT$ is \emph{standard} if it misses the vertices of $\calT$, is transverse to the edges of $\calT$ and intersects each triangle in a collection of properly embedded arcs. A \emph{simplifying disc} for $C$ is a disc $D$ lying in a triangle $\Delta$ of the triangulation such that
\begin{enumerate}
\item $\partial D$ is the union of two arcs $\alpha$ and $\beta$ that intersect at their endpoints;
\item $\alpha = D \cap C$;
\item $\beta = D \cap \partial \Delta$ is a subset of an edge of $\Delta$.
\end{enumerate}
See Figure \ref{Fig:SimplifyingCurve}.
The \emph{simplification number} $\mathrm{simp}(C)$ is the number of simplifying discs for $C$.

\begin{figure}[h]
\centering
\includegraphics[width=0.5\textwidth]{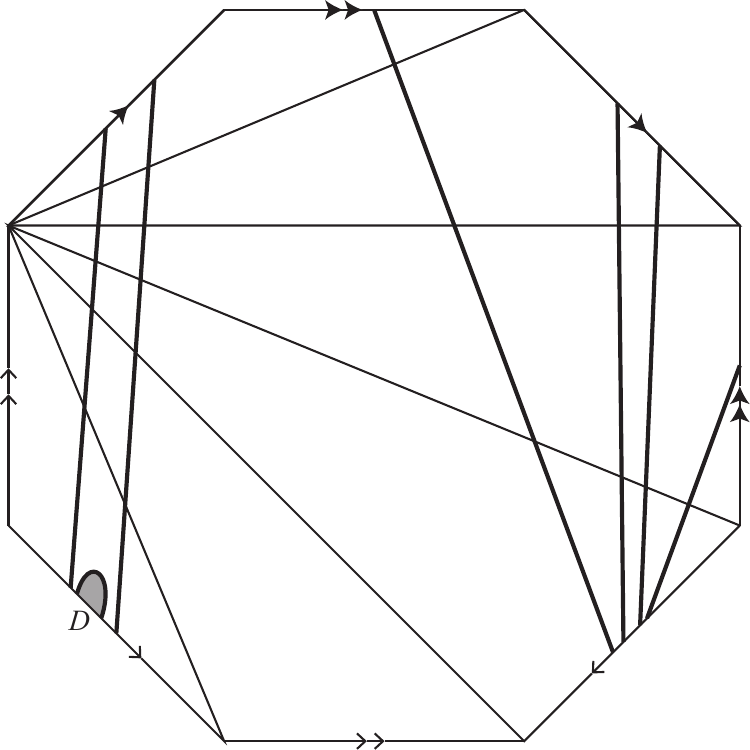}
\caption{A standard curve in a triangulated surface, with a simplifying disc $D$}
\label{Fig:SimplifyingCurve}
\end{figure}

\begin{theorem}[{\sc Normalise standard 1-manifold in a triangulation}]
\label{Thm:NormaliseStandardCurves}
There is an algorithm that takes, as its input, the following data:
\begin{enumerate}
\item a triangulation $\mathcal{T}$ for a compact surface $S$;
\item a standard 1-manifold $C$ properly embedded in $S$;
\end{enumerate}
and provides, as its output, a normal 1-manifold $\overline{C}$ that is obtained from $C$ by an ambient isotopy and possibly removing some components that are parallel to arcs in $\partial S$ or that bound discs in $S$. The running time for the algorithm is bounded above by a polynomial function of the number of triangles in $\mathcal{T}$, the simplification number of $C$ and the logarithm of the weight of $C$.
\end{theorem}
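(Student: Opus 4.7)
The plan is to encode $C$ by its local arc pattern in each triangle of $\mathcal{T}$ and perform normalisation by batched arithmetic operations on this encoding, rather than by processing individual simplifying discs one at a time (which could require $\Theta(w(C))$ steps). For each triangle $\Delta$, I would record the number of arcs of $C \cap \Delta$ of each combinatorial type: three ``normal'' types (connecting pairs of distinct edges) and three ``non-normal'' types (arcs with both endpoints on a single edge, stacked in a nested fashion). These counts are non-negative integers of bit-length $O(\log w(C))$, and a simplifying disc corresponds precisely to a triangle-edge pair $(\Delta,e)$ whose non-normal count is positive.

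The basic operation is a \emph{batched push}: given a triangle $\Delta$ with $k \geq 1$ parallel non-normal arcs on an edge $e$, push all $k$ of them simultaneously across $e$ into the adjacent triangle $\Delta'$. This isotopy reduces the intersection with $e$ by $2k$, and amalgamates the pushed arcs with the existing arcs of $C \cap \Delta'$ by pairing the $2k$ endpoints on $e$ according to their innermost-first nesting. The updated arc counts in $\Delta'$, together with any new non-normal arcs produced on a different edge $e''$ of $\Delta'$, are a closed-form function of the old counts and $k$, computable by integer arithmetic in time polynomial in $\log w(C)$.

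After such a push, the newly created non-normal arcs on $e''$ may themselves need to be pushed across $e''$, so the process cascades. The sequence of triangle-edge pairs visited by a cascade forms a walk in a combinatorial graph of size $O(|\mathcal{T}|)$ whose transitions are determined by the normal-arc patterns in the other triangles. I would follow this walk until one of three things happens: (i) the arcs are absorbed into normal arcs of some triangle and disappear; (ii) they close up into a component bounding a disc in $S$ or parallel to an arc in $\partial S$, in which case the theorem permits me to delete that component outright; or (iii) the walk revisits a triangle-edge pair and enters a cycle, at which point I invoke a fast-forwarding argument in the spirit of Agol--Hass--Thurston: by repeatedly squaring the ``one-lap'' transition map, I would determine in $O(\log w(C))$ iterations how many laps around the cycle are completed before a residual sub-block drops into case (i) or (ii). Each cascade thus runs in time polynomial in $|\mathcal{T}|$ and $\log w(C)$.

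The hard part will be the analysis underlying the cascading step: showing that the transition map on arc counts induced by one lap of the propagation cycle is $\ZZ$-affine in $k$, so that the doubling step is valid and the exit condition can be detected by solving linear arithmetic inequalities of bit-length $O(\log w(C))$; and exhibiting a potential function, built from $\mathrm{simp}(C)$ together with a count of ``active'' triangle-edge channels, which strictly decreases at each cascade so that the total number of cascades is bounded by a polynomial in $|\mathcal{T}|$ and $\mathrm{simp}(C)$. The resulting 1-manifold $\overline{C}$ has no simplifying disc and is therefore normal, and by construction it is obtained from $C$ by an ambient isotopy together with the deletion of the trivial components encountered in case (ii).
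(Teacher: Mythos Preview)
Your proposal shares the essential strategy with the paper: batch parallel simplifications together and handle the case where pushes cycle back on themselves by a fast-forwarding argument. However, your organisation is genuinely different and, as it stands, has a real gap at the monotonicity step.

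The paper does not push across one edge at a time. Instead, for a simplifying disc $D$ it defines the \emph{isotopy region} $R$ to be $D$ together with the entire component of the parallelity bundle of $S\cut C$ incident to $D$; this region may run through many triangles. The key lemma is that an isotopy across $R$ never increases $\mathrm{simp}(C)$. From there, a \emph{maximal enlargement} of $R$ handles many parallel copies at once, and the cycling case is isolated as a \emph{Reeb region} (a M\"obius band or annulus in which the parallel arcs spiral) which is unwound in a single move. The paper then shows that after $O(|\calT|+\mathrm{simp}(C))$ such moves, $\mathrm{simp}$ strictly drops, giving the polynomial bound on the number of outer iterations.

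Your single-edge batched push does \emph{not} enjoy the monotonicity of $\mathrm{simp}$: the paper's own Figure illustrating an increase in simplification number is exactly a one-edge push. So your cascade, viewed step by step, can raise $\mathrm{simp}$ before it falls, and the promised potential ``$\mathrm{simp}(C)$ plus a count of active channels'' is the whole difficulty rather than a bookkeeping afterthought. You would either have to prove a new monotone quantity for per-edge pushes, or else follow each cascade to its terminus \emph{before} committing any changes --- at which point you have recomputed the parallelity bundle and are doing what the paper does, just in different language.

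Two smaller points. First, your encoding with exactly three non-normal types per triangle is not sufficient: a single edge can carry several disjoint nested stacks of non-normal arcs, separated by normal arcs; the number of such stacks is bounded by $\mathrm{simp}(C)$, so this is repairable, but it means your state is not a fixed-length tuple per triangle. Second, the cascade need not be a simple walk once pre-existing non-normal arcs in the receiving triangle interact with the pushed block; with a clean block of $2k$ endpoints and only normal arcs in $\Delta'$ one does get non-normal output on at most one edge, but pre-existing simplifying discs in $\Delta'$ break that dichotomy. The paper sidesteps both issues by working in $S\cut C$ via the parallelity bundle rather than in the triangles.
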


In order for the statement of the theorem to be complete, we need to explain how a standard 1-manifold in a surface is entered into the algorithm. 
To do this, we first explain how to describe a system of disjoint properly embedded arcs $C$ in a triangle $\Delta$, where these arcs are disjoint from the vertices of $\Delta$. Suppose that the edges of $\Delta$ are oriented in some way. The arcs intersect the edges some numbers of times, $n_1$, $n_2$ and $n_3$. Since each edge is oriented, we may identify these points of intersection with the $i$th edge with the integers $[1,n_i] \cap \mathbb{Z}$. We say that two arcs $\alpha_0$ and $\alpha_1$ of $C$ are \emph{parallel} if there is an embedding of $[0,1] \times [0,1]$ in $\Delta$ such that 
\begin{enumerate}
\item $[0,1] \times \{ 0 \} = \alpha_0$, 
\item $[0,1] \times \{ 1 \} = \alpha_1$, 
\item $([0,1] \times [0,1]) \cap \partial \Delta = \{ 0,1 \} \times [0,1]$,
\item this square is disjoint from the vertices of $\Delta$,
\item the intersection between $[0,1] \times [0,1]$ and each component of $C$ is either empty or an arc of the form $[0,1] \times \{ t \}$. 
\end{enumerate}
This forms an equivalence relation on the components of $C$ and we refer to the equivalence classes as \emph{parallelism classes}. The endpoints of the arcs in a parallelism class consist of two intervals $[a,b] \cap \mathbb{Z}$ and $[c,d] \cap \mathbb{Z}$ in the intervals $[1,n_i] \cap \mathbb{Z}$ and $[1,n_j] \cap \mathbb{Z}$ corresponding to the relevant edges, and the arcs themselves determine an order-preserving or order-reversing bijection $[a,b] \cap \mathbb{Z} \rightarrow [c,d] \cap \mathbb{Z}$, which is termed a \emph{pairing}. The collection of all these pairings is termed a \emph{pairing system} for $\Delta$. See Figure \ref{Fig:PairingSystem}. It is clear that there is a simple procedure for determining whether a collection of pairings determines a pairing system. In fact, a collection of pairings gives a pairing system if and only if the following all hold:
\begin{enumerate}
\item the ranges and domains of all the pairings form a partition of $[1,n_1] \cap \mathbb{Z}$, $[1,n_2] \cap \mathbb{Z}$ and $[1,n_3] \cap \mathbb{Z}$; 
\item a pairing is order-reversing if and only if the orientations of the edges containing the range and domain are consistent around $\partial \Delta$; 
\item the ranges and domains of no two pairings interleave around $\partial \Delta$.
\end{enumerate}
Thus, a standard 1-manifold $C$ in a triangulated surface is given by the following data:
\begin{enumerate}
\item an orientation on each edge;
\item a non-negative integer $n(e)$ for each edge $e$; this is the number of points of intersection between $C$ and $e$;
\item a pairing system for each face, where the number of endpoints of the arcs on each edge $e$ is $n(e)$.
\end{enumerate}
The above definition of a pairing system in a triangle readily generalises to the case of polygons, in an obvious way.

\begin{figure}[h]
\centering
\includegraphics[width=0.5\textwidth]{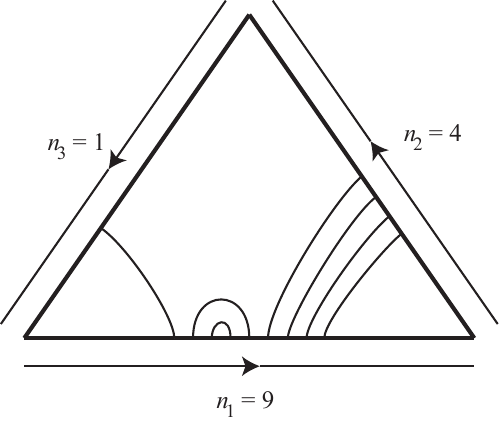}
\caption{Shown is a system of arcs in triangle. This system is specified by picking orientations on the edges, as shown, and setting $n_1 = 9$, $n_2 = 4$ and $n_3 = 1$. The arcs are given by three pairings. For example, one set of arcs is given by the pairing $[2, 3] \cap \mathbb{Z} \rightarrow [4,5] \cap \mathbb{Z}$, sending $x$ to $7-x$, and where the domain and range are both viewed as subsets of $[1,n_1] \cap \mathbb{Z}$.}
\label{Fig:PairingSystem}
\end{figure}

\subsection{Comparison with previous work}
\label{Subsec:PreviousWork}
As mentioned earlier, there has been a lot of previous work on the computation of geometric intersection number of curves in surfaces, which we now briefly summarise. 

In  \cite{SSS1}, Schaefer, Sedgwick and \u{S}tefankovi\u{c} used the theory of straight line programs to provide very efficient solutions to many computational problems about 1-manifolds properly embedded in surfaces. They built on this theory in \cite{SSS2} to provide an algorithm to compute the geometric intersection number of two properly embedded 1-manifolds. One significant difference between their work and ours is that they restricted to the case where $S$ has non-empty boundary. It turns out when $\partial S$ is non-empty, many algorithmic questions become somewhat easier. One way of understanding this is to note that the fundamental group of $S$ is then free, and so closed curves then have a \emph{unique} cyclically reduced representative in $\pi_1(S)$. Our algorithms, on the other hand, permit $S$ to be closed. Another significant distinction between Theorem \ref{Thm:GeometricIntersectionNumber} and the work in \cite{SSS2} is that our algorithm has running time that depends only polynomially on $|\calT|$. 

An alternative approach to computing geometric intersection numbers was given by Bell and Webb in \cite{BellWebb}. This relied on prior work of Bell \cite{Bell}, which provided a rapid method for modifying triangulations of surfaces. Given two 1-manifolds $C_1$ and $C_2$ properly embedded in $S$ expressed as normal curves with respect to an ideal triangulation $\calT$, Bell and Webb were able to change $\calT$ to another ideal triangulation $\calT'$ that intersects $C_1$ in a very simple form. They could also keep track of how $C_2$ lies with respect to $\calT'$, and then ensure that $C_2$ and $C_1$ have no complementary bigon. The geometric intersection number of $C_1$ and $C_2$ could then be directly read off. Like in \cite{SSS2}, Bell and Webb require $S$ to have non-empty boundary, since they make substantial use of the fact that an essential simple closed curve has a unique normal representative with respect to an ideal triangulation. Also as in \cite{SSS2}, they make no analysis of how the running time depends on $S$. Nevertheless, the approach taken by Bell and Webb is possibly closest to the one we use, since we also use modifications of our given triangulation.

A third approach to geometric intersection numbers was by Dynnikov \cite{Dynnikov}, who introduced a novel method of representing embedded curves on surfaces. He also also used a novel method of expressing mapping classes of surfaces via matrices, so that composition of mapping classes can be computed via matrix multiplication. However, again, Dynnikov required the surface $S$ to have non-empty boundary.

Despr\'e and Lazarus \cite{DespreLazarus} analysed a related but different problem. They considered 1-manifolds $C$ in the surface $S$ with $C \cap \partial S = \partial C$, but which might not be embedded. Such a curve has a well-defined self-intersection number, which is the minimal number of self-intersections among representatives with only double points, up to homotopy that keeps $\partial C$ in $\partial S$. They give an efficient method for computing this self-intersection number. 
Despr\'e and Lazarus had to focus on a connected 1-manifold. On the other hand, Chang and de Mesmay \cite{ChangdeMesmay} and Dubois \cite{dubois} considered the same problem, but for 1-manifolds with multiple components. By applying this to a 1-manifold that is the union of two embedded 1-manifolds $C_1$ and $C_2$, this gives another method for computing their geometric intersection number. The algorithms of Despr\'e and Lazarus, Dubois and Chang and de Mesmay have polynomial dependence of the genus of the surface and permit the surface to be closed, but also depend polynomially on the initial number of self-intersection points of the given 1-manifold $C$. Hence, when applied to the union of two 1-manifolds $C_1$ and $C_2$, the running time does not depend polynomially on $\log w(C_1)$ and $\log w(C_2)$.

The methods that we present in this paper rely (like \cite{SSS2, BellWebb, Dynnikov}) very heavily on the assumption that the 1-manifolds are embedded. Nevertheless, there is some hope that our methods may be adapted and possibly combined with those of \cite{DespreLazarus, dubois, ChangdeMesmay} to efficiently compute the geometric intersection number of two 1-manifolds $C_1$ and $C_2$, which may be non-embedded but which have a relatively small number of self-intersections.

\subsection{Structure of the paper} In Section \ref{Sec:HSNormal}, we introduce handle structures for surfaces and normal and standard 1-manifolds within them. We also state versions of Theorems \ref{Thm:GeometricIntersectionNumber} and \ref{Thm:NormaliseStandardCurves} for handle structures instead of triangulations. In Section \ref{Sec:AHT}, we recall the algorithm of Agol, Hass and Thurston \cite{AHT} and explain some of its uses in the study of normal and standard 1-manifolds. In Section \ref{Sec:Cut}, we examine the handle structure obtained by cutting along a standard 1-manifold, and explain how it can be computed efficiently. In Section \ref{Sec:Normalise}, we prove Theorem \ref{Thm:NormaliseStandardCurves} and its analogue for handle structures. This section is the heart of the paper, as its main ideas recur at several points later. In Section \ref{Sec:MinimalPositionCurves}, we show how to place two 1-manifolds $C$ and $P$ into minimal position, via an isotopy of $C$, in the important case where $P$ is simplicial in the triangulation. In Section \ref{Sec:GeometricIntersectionNumber},
this is used, along with a theorem of the author and Yazdi \cite{LackenbyYazdi}, to complete the proof of Theorems \ref{Thm:GeometricIntersectionNumber} and \ref{Thm:IsotopyProblem}. In Section \ref{Sec:Pattern}, we consider a more general situation, where the surface $S$ contains a `pattern' which is a union of disjoint properly embedded 1-manifolds and graphs (with restrictions on the vertex degrees). We show how to arrange for a properly embedded 1-manifold to intersect a pattern `minimally'. The reason for studying patterns is that, in the analysis of hierarchies for 3-manifolds, the boundary of each 3-manifold in the hierarchy naturally inherits a pattern \cite{HakenHomeomorphism, Matveev}.

\section{Handle structures and normal 1-manifolds}
\label{Sec:HSNormal}

\subsection{Handle structures}
In Theorems \ref{Thm:GeometricIntersectionNumber} and \ref{Thm:NormaliseStandardCurves}, the surface $S$ was given by a triangulation. This is because triangulations are a very typical way of presenting a surface. However, it is frequently useful to deal with handle structures instead. (See, for example, Definition \ref{Def:InheritedHS} where a useful construction involving handle structures is presented, and which has no direct analogue for triangulations.) As we will see, there are versions of Theorems \ref{Thm:GeometricIntersectionNumber} and \ref{Thm:NormaliseStandardCurves} for handle structures. 
In this section, we introduce some of the terminology for handle structures. 

\begin{definition}
A \emph{handle structure} for a surface $S$ is a decomposition of $S$ into three sets $\calH^0$, $\calH^1$, $\calH^2$, subject to the following conditions:
\begin{enumerate}
\item $\calH^0$ is a union of disjoint closed discs;
\item each component of $\calH^1$ is a disc of the form $[0,1] \times [0,1]$ with $([0,1] \times [0,1]) \cap \calH^0 = \{ 0 , 1 \} \times [0,1]$;
\item each component of $\calH^2$ is a disc $D$ with $\partial D = D \cap (\calH^0 \cup \calH^1)$.
\end{enumerate}
Each component of $\calH^i$ is called an \emph{$i$-handle}. See, for example, Figure \ref{Fig:HSPants}. For each 1-handle $[0,1] \times [0,1]$, the arc $[0,1] \times \{ 1/2 \}$ is its \emph{core} and $\{ 1/2 \} \times [0,1]$ is its \emph{co-core}.
\end{definition}

\begin{figure}[h]
\centering
\includegraphics[width=0.7\textwidth]{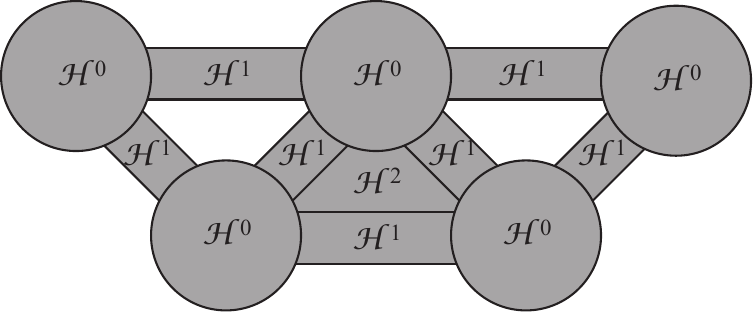}
\caption{A handle structure on a 3-holed sphere}
\label{Fig:HSPants}
\end{figure}

\begin{definition}
Let $\calH$ be a handle structure for a surface $S$. Its \emph{size} $||\calH||$ is the number of handles.
\end{definition}

\begin{remark}
It is quite common to work instead with the number of 0-handles of $\calH$, denoted $|\calH|$. This is only a good measure of the complexity of $\calH$ when the 0-handles are constrained to fall into one of finitely many pre-arranged types. We do not wish to make that hypothesis in this paper, and we therefore work instead with $||\calH||$.
\end{remark}

\begin{definition}
Let $\calH$ be a handle structure for a surface $S$. Its \emph{underlying cell structure} is the cell structure on $S$ where each handle is a 2-cell. Each 1-cell arises either as a component of intersection between two handles or between a handle and $\partial S$. Each 0-cell arises either as a component of intersection between three handles or between two handles and $\partial S$.
\end{definition}

For a cell structure $\calX$ and $i \in \{ 0,1,2 \}$, $\calX^i$ denotes the set of $i$-cells.

\begin{lemma} 
\label{Lem:Number1Cells}
The number of 1-cells in the underlying cell structure is $6|\calH^1| \leq 6||\calH||$.
\end{lemma}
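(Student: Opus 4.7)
The plan is to compute $|\calC^1|$ by a short Euler-characteristic argument.

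First I would show that every 0-cell of the underlying cell structure has valence exactly $3$. At an interior 0-cell, three handles meet at a point, and their three pairwise intersections provide three 1-cells emanating from that point. At a 0-cell on $\bdy S$, two handles and $\bdy S$ come together; the mutual intersection of the two handles runs into the interior, while $\bdy S$ contributes one 1-cell on each side of the 0-cell. Either way the valence is $3$, and summing valences over the 0-cells gives $2|\calC^1| = 3|\calC^0|$, so $|\calC^0| = \tfrac{2}{3}|\calC^1|$.

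Next I would apply Euler's formula $|\calC^0| - |\calC^1| + |\calC^2| = \chi(S)$ to the underlying cell structure. Its 2-cells are by definition exactly the handles, so $|\calC^2| = ||\calH||$, and substituting the identity above yields $|\calC^1| = 3(||\calH|| - \chi(S))$. Rewriting $\chi(S) = |\calH^0| - |\calH^1| + |\calH^2|$, which is the standard handle-decomposition formula obtained by collapsing each handle onto its core, gives
\[
|\calC^1| = 3(|\calH^0| + |\calH^1| + |\calH^2|) - 3(|\calH^0| - |\calH^1| + |\calH^2|) = 6|\calH^1|,
\]
and the bound $6|\calH^1| \leq 6||\calH||$ is immediate.

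The only point requiring care is the valence count: one must confirm from the paper's definitions that no 0-cell is the meeting point of four or more objects. This is built in, since a 0-cell is defined as a component of intersection of exactly three handles, or of exactly two handles with $\bdy S$, so no additional transversality hypothesis is needed.
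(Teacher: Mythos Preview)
Your argument is correct but takes a different route from the paper. The paper simply counts the 1-cells directly: each 1-handle has two 1-cells along its long sides (where it meets 2-handles or $\partial S$), giving $2|\calH^1|$ such 1-cells; and the 1-cells in the boundaries of the 0-handles alternate between the $2|\calH^1|$ attaching arcs of the 1-handles and an equal number of arcs in $\calH^0\cap(\calH^2\cup\partial S)$, giving $4|\calH^1|$ 1-cells there; the total is $6|\calH^1|$. Your Euler-characteristic approach is slightly longer but produces the intermediate identity $|\calC^1|=3(||\calH||-\chi(S))$, which explains structurally why only $|\calH^1|$ enters the final answer; the paper's direct count is more elementary and avoids any appeal to $\chi(S)$.
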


\begin{proof}
Each 1-cell lies in the boundary of a 0-handle or a 1-handle. The number of 1-cells in the boundary of a 1-handle but not the boundary of a 0-handle is $2|\calH^1|$. The number of 1-cells in the boundary of a 0-handle is $4|\calH^1|$.
\end{proof}

A cell structure on a surface $S$ is specified by giving a list of cells, together with their attaching maps onto the lower-dimensional cells. These attaching maps are specified as follows. Each 1-cell is oriented and its initial and terminal 0-cells are specified. The boundary of each 2-cell is specified as a sequence of 1-cells, together with information about whether each 1-cell is traversed consistently or inconsistently with the orientation. 

A handle structure $\calH$ on a surface $S$ is specified by giving its underlying cell structure $\calX$, together with the index of each handle. The amount of data required is then bounded above by a polynomial function of $||\calH||$ for the following reason. The attaching maps of the 1-cells require $2|\calX^1| + ||\calH||$ pieces of information, each of which is either the indexing number of a 0-cell or a spacing digit specifying a break between one 2-cell and the next. We may give each indexing number using at most $\log (||\calH|| + 1)$ digits. The attaching maps of the 2-cells require at most $2|\calX^1|$ pieces of information, each of which is the number of a 1-cell as well as its orientation. The number of 1-cells of $\calX$ is at most $6||\calH||$, by Lemma \ref{Lem:Number1Cells}. So the amount of data required to specify $\calH$ is at most $O(||\calH|| \log (||\calH||+1))$ bits and at least $||\calH||$ bits.

\subsection{Standard and normal 1-manifolds}

\begin{definition} 
Let $S$ be a surface with a handle structure $\calH$. Then a 1-manifold $C$ properly embedded in $S$ is \emph{standard} if
\begin{enumerate}
\item it intersects each 0-handle in a collection of properly embedded arcs;
\item it intersects each 1-handle in a collection of properly embedded arcs that respect the handle's product structure and run parallel to the core of the handle;
\item it is disjoint from the 2-handles.
\end{enumerate}
\end{definition}

\begin{definition}
The \emph{weight} $w(C)$ of a standard 1-manifold $C$ is the number of components of $C \cap \calH^1$.
\end{definition}

\begin{remark} 
For simple closed curves $C$, $w(C)$ is a reasonable measure of their complexity. However, when $C$ has arc components, some of these may have weight zero. Therefore, $w(C) + |\partial C|$ is a better measure of the complexity of $C$.
\end{remark}

\begin{definition} 
Let $\calH$ be a handle structure for a compact surface $S$. Let $\calX$ be the associated cell structure of $S$, with a 2-cell for each handle. Then a \emph{pairing system} for a standard 1-manifold $C$ is the following:
\begin{enumerate}
\item it misses $H_0 \cap \calH^1 \cap \calH^2$ and $H_0 \cap \calH^1 \cap \partial S$;
\item a choice of orientation on each 1-cell in $\calX$;
\item a non-negative integer for each 1-cell that gives the number of points of intersection
between $C$ and the 1-cell;
\item a pairing system on each 2-cell of $\calH^0$ and $\calH^1$, that respects the product structure on each component of $\calH^1$, and that agrees on the boundary of the 2-cell with the data given in (2).
\end{enumerate}
\end{definition}

\begin{definition}
Let $\calH$ be a handle structure for a compact surface $S$, and let $H_0$ be a 0-handle of $\calH$. An arc properly embedded in $H_0$ is \emph{normal} if it satisfies the following conditions:
\begin{enumerate}
\item it does not have endpoints in the same component of $H_0 \cap \calH^1$;
\item it does not have endpoints in the same component of $H_0 \cap \partial S$;
\item it does not have one endpoint in $H_0 \cap \calH^1$ and the other endpoint in an adjacent component of $H_0 \cap \partial S$.
\end{enumerate}
\end{definition}

\begin{definition}
Let $\calH$ be a handle structure for a compact surface $S$. A 1-manifold $C$ properly embedded in $S$ is \emph{normal} if it is standard and, in addition, each component of $C \cap \calH^0$ is normal.
\end{definition}

\begin{definition}
\label{Def:SimpInHS}
Let $S$ be a surface with a handle structure $\calH$. Let $C$ be a standard 1-manifold properly embedded in $S$. Then a \emph{simplifying disc} for $C$ is a disc $D$ lying in a 0-handle $H_0$ such that $\partial D$ is the union of two arcs $\alpha$ and $\beta$ that intersect at their endpoints, satisfying the following:
\begin{enumerate}
\item $\alpha = D \cap C$; 
\item $\beta = D \cap \partial H_0$;
\item $\beta$ intersects $\calH^1$ in at most one arc;
\item $\beta$ intersects $\partial S$ in at most one arc.
\end{enumerate}
If $D$ intersects $\partial S$, it is a \emph{boundary-simplifying disc}. Otherwise it is an \emph{interior-simplifying disc}.
\end{definition}

\begin{figure}[h]
\centering
\includegraphics[width=0.9\textwidth]{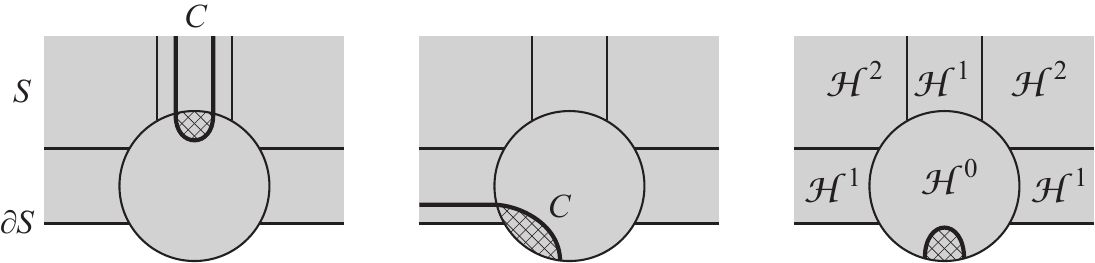}
\caption{The three types of a simplifying disc for a 1-manifold in a surface. On the left is an interior-simplifying disc. In the middle and on the right are boundary-simplifying discs}
\label{Fig:IncreaseSimplifyingIndex}
\end{figure}

\begin{lemma}
\label{Lem:NormalNoSimplifying}
Let $\calH$ be a handle structure for a compact surface $S$, and let $C$ be a standard 1-manifold properly embedded in $S$. Then $C$ is normal if and only if it has no simplifying disc.
\end{lemma}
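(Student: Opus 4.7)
The plan is to prove both directions by a straightforward analysis of the three conditions defining normality, using an innermost-disc argument for the harder direction.

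For the easy direction ($C$ normal $\Rightarrow$ no simplifying disc), I would argue by contrapositive. Suppose $D \subset H_0$ is a simplifying disc, with $\partial D = \alpha \cup \beta$ as in Definition~\ref{Def:SimpInHS}. The two endpoints of $\alpha$ lie on $\beta \subset \partial H_0$, and $\beta$ hits $\calH^1$ in at most one arc and $\partial S$ in at most one arc. I would split into three cases according to which of these arcs are present. If $D$ is interior-simplifying, then $\beta \subset \calH^1$, so both endpoints of $\alpha$ lie in a single component of $H_0 \cap \calH^1$, violating (1) of normality. If $D$ is boundary-simplifying with $\beta$ entirely in $\partial S$, then both endpoints of $\alpha$ lie in a single component of $H_0 \cap \partial S$, violating (2). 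Finally, if $\beta$ has one arc in $\calH^1$ and one in $\partial S$, they share a 0-cell of $\calC$, and so $\alpha$ has one endpoint in $H_0 \cap \calH^1$ and the other in an adjacent component of $H_0 \cap \partial S$, violating (3).

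For the converse, suppose $C$ is not normal; I need to build a simplifying disc. Choose a 0-handle $H_0$ and an arc component $\alpha$ of $C \cap H_0$ that witnesses the failure of one of the three conditions. In each case the two endpoints of $\alpha$ sit on a natural arc $\beta \subset \partial H_0$: either a subarc of a single component of $H_0 \cap \calH^1$ (case (1)), or of a single component of $H_0 \cap \partial S$ (case (2)), or the union of two such arcs meeting at a 0-cell (case (3)). Since $H_0$ is a disc, $\alpha \cup \beta$ bounds a disc $D$ in $H_0$. The remaining issue is that $D$ may contain other arcs of $C \cap H_0$, so $D$ need not yet be a simplifying disc.

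To fix this, I would run an innermost argument. Among all arc components of $C \cap H_0$ that witness the same failure type (1), (2), or (3) as $\alpha$, choose one whose associated $\beta$ is minimal, measured by the number of endpoints of $C$ lying in the interior of $\beta$. If that number is positive, some arc $\alpha'$ of $C \cap H_0$ has an endpoint inside $\beta$; since arcs of $C \cap H_0$ are pairwise disjoint and cannot cross $\beta \subset \partial H_0$, the other endpoint of $\alpha'$ must also lie on $\beta$, and then $\alpha'$ either witnesses the same failure with a strictly shorter $\beta$ (in cases (1) and (2) trivially, and in case (3) by a small case check on whether $\alpha'$ crosses the 0-cell) or witnesses one of the other failure types. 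Iterating, I reach an arc whose $\beta$ meets $C$ only at its endpoints; then $\partial D = \alpha \cup \beta$ with $\alpha = D \cap C$ and $\beta = D \cap \partial H_0$, and the constraints on $\beta$ show $D$ satisfies (3) and (4) of Definition~\ref{Def:SimpInHS}.

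The only subtle point, and the one I would double-check carefully, is case (3) of the innermost argument: an interior endpoint of $\beta$ on the $\calH^1$-side may belong to an arc $\alpha'$ whose other end lies on the $\partial S$-side of $\beta$, and one has to verify that such an $\alpha'$ either inherits a shorter $\beta$ of type (3) or instead produces one of type (1) or (2). Once the case analysis is settled, the lemma follows.
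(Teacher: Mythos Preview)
Your proof is correct and follows essentially the same strategy as the paper: the easy direction is the same observation that $D \cap C$ is a non-normal arc, and the converse uses an innermost argument inside the 0-handle.

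The paper's execution of the converse is slightly more streamlined than yours, and it is worth seeing how. Rather than tracking which of the three normality conditions fails and building the arc $\beta$ accordingly, the paper simply cuts $H_0$ along the non-normal arc $\alpha$ and observes that one component $D$ of $H_0 \cut \alpha$ meets $\calH^1$ in at most one arc and $\partial S$ in at most one arc (this is immediate from the failure of any of (1)--(3)). Then any outermost arc of $C$ inside $D$ separates off a simplifying disc, without any need to check which normality condition that outermost arc violates. This avoids your case analysis entirely, in particular the ``subtle point'' you flag in case (3). Your induction on the number of endpoints of $C$ inside $\beta$ is a perfectly valid substitute, but the paper's version shows that the case split is unnecessary.
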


\begin{proof} If $C$ has a simplifying disc $D$, then $D \cap C$ is a non-normal arc. Conversely, if $C$ has a non-normal arc $\alpha$ in some 0-handle $H_0$, then some component $D$ of $H_0 \cut \alpha$ is a disc $D$ that intersects $\calH^1$ in at most one arc and $\partial S$ in at most one arc. This disc may contain arcs of intersection with $C$, but then an outermost such arc separates off a simplifying disc for $C$.
\end{proof}


\begin{definition}
\label{Def:SimpNumber}
The \emph{simplification number} $\mathrm{simp}(C)$ of $C$ is the sum of the number of boundary-simplifying discs and twice the number of interior-simplifying discs.
\end{definition}

In the proof of Lemma \ref{Lem:SimpDoesNotGoUp}, we explain why boundary-simplifying discs and interior-simplifying discs are given different weightings in the above definition.

The following result is an analogue of Theorem \ref{Thm:NormaliseStandardCurves} for handle structures, and is one of the main theorems of this paper.

\begin{theorem}[{\sc Normalise a standard 1-manifold in a handle structure}]
\label{Thm:Normalise1Manifold}
There is an algorithm that takes as its input the following:
\begin{enumerate}
\item a handle structure $\calH$ for a compact surface $S$;
\item a standard 1-manifold $C$ properly embedded in $S$;
\end{enumerate}
and outputs a normal 1-manifold $\overline{C}$ that is obtained from $C$ by an isotopy plus possibly discarding some curves that bound discs and boundary-parallel arcs. The running time for the algorithm is bounded above by a polynomial function of $||\calH||$, $\mathrm{simp}(C)$ and $\log (w(C) + |\partial C|)$.
\end{theorem}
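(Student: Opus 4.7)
The plan is to iteratively push $C$ across simplifying discs, using the pairing system representation so that each individual step runs in time polynomial in $||\calH||$ and $\log(w(C)+|\partial C|)$, and so that the total number of pushes is bounded by $\mathrm{simp}(C)$. By \reflem{NormalNoSimplifying}, it suffices to produce a standard 1-manifold with no simplifying disc, together with the component-discarding already permitted in the conclusion. The essential point is that although $w(C)$ may be exponentially large, the input is presented by pairing system data of total size polynomial in $||\calH||$ and $\log(w(C)+|\partial C|)$, and every operation we perform modifies only a bounded number of pairings per affected 0-handle, changing the non-negative integer weights $n(e)$ by amounts manipulated in binary.

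To detect a simplifying disc, I use that $C$ has one in a 0-handle $H_0$ precisely when one of its arcs in $H_0$ is non-normal. Given the pairing system in $H_0$, whether such a non-normal arc exists is determined by examining, for each pairing, the components of $H_0 \cap \calH^1$ and $H_0 \cap \partial S$ containing its domain and range: we look for a pairing whose domain and range lie on the same component of $H_0 \cap \calH^1$, on the same component of $H_0 \cap \partial S$, or on a $\calH^1$-component adjacent to a $\partial S$-component. Since the number of parallelism classes in $H_0$ is bounded polynomially in the number of 1-cells on $\partial H_0$, which is $O(||\calH||)$ by \reflem{Number1Cells}, the search over all 0-handles runs in time polynomial in $||\calH||$.

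Having located a simplifying disc $D$, I push $C$ across $D$ in bulk: the pairing corresponding to $D$ represents an entire parallelism class of simplifying arcs, and all of them are pushed simultaneously across $D$. Depending on whether $D$ is interior- or boundary-simplifying, the effect is either to transfer the class across a 1-handle into a neighbouring 0-handle, or to remove the class onto $\partial S$. In every case the combinatorial change is local: a bounded number of pairings in the affected 0-handle(s) are split, merged, or deleted, and the integer weights $n(e)$ on the incident 1-cells change by an amount equal to the size of the pushed class. Because the weights are stored in binary and only a bounded number of pairings change, one push can be executed in time polynomial in $||\calH||$ and $\log(w(C)+|\partial C|)$. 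After each push, any components that now lie entirely in a single 0-handle and bound a disc there, or that are boundary-parallel arcs in a single 0-handle, are recognised from the pairing system and discarded, as explicitly allowed by the statement of the theorem.

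The main obstacle is to establish the strict decrease $\mathrm{simp}(C_{\mathrm{new}}) < \mathrm{simp}(C)$ under each push. Pushing across $D$ destroys the chosen simplifying disc but may create new simplifying discs on the far side(s) of the incident 1-handle(s). The weighting in \refdef{SimpNumber}---boundary-simplifying discs counted once, interior-simplifying discs counted twice---is designed precisely to make the accounting balance: an interior-simplifying disc, when pushed, meets two neighbouring 0-handles and may generate at most one new simplifying disc in each, while a boundary-simplifying disc is adjacent to only one 0-handle on its far side and so may generate at most one new one. This is the content of the monotonicity lemma advertised in the remark following \refdef{SimpNumber}; verifying it by a careful case analysis of the possible configurations of $\beta = D \cap \partial H_0$ and the pushed arcs' new endpoints is the crux. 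Granting it, the algorithm terminates after at most $\mathrm{simp}(C)$ pushes, and multiplying the per-step cost by $\mathrm{simp}(C)$ gives the desired overall bound in $||\calH||$, $\mathrm{simp}(C)$, and $\log(w(C)+|\partial C|)$.
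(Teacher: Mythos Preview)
Your proposal has a genuine gap at its crux: the claimed strict decrease $\mathrm{simp}(C_{\mathrm{new}}) < \mathrm{simp}(C)$ after pushing a parallelism class of simplifying arcs across a 1-handle is false. The paper gives an explicit counterexample in Figure~\ref{Fig:IncreaseSimplifyingIndex} and states outright that ``$\mathrm{simp}(C')$ might actually be greater than $\mathrm{simp}(C)$''. The $2$-versus-$1$ weighting of interior versus boundary simplifying discs does not salvage the naive accounting: when an interior-simplifying arc is pushed through a 1-handle, the new arc landing in the neighbouring 0-handle can itself be an \emph{interior}-simplifying arc, contributing $2$ rather than $1$; and simultaneously the arc that was directly behind the pushed class in the original 0-handle may now bound a fresh simplifying disc there. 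The remark after Definition~\ref{Def:SimpNumber} that you cite points not to a monotonicity lemma for pushes across simplifying discs, but to Lemma~\ref{Lem:SimpDoesNotGoUp}, which concerns the much larger \emph{isotopy region}.

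The paper's fix is to enlarge the simplifying disc $D$ to an isotopy region $R$ consisting of $D$ together with the entire incident component of the parallelity (and possibly semi-parallelity) bundle of $S \cut C$; only for this larger move does one get $\mathrm{simp}(C') \le \mathrm{simp}(C)$ (Lemma~\ref{Lem:SimpDoesNotGoUp}), and even then not strict inequality. To force a strict drop one needs further machinery: maximal enlargements of $R$ (Definition~\ref{Def:MaximalEnlargement}), a bound on how many such isotopies can occur before $\mathrm{simp}$ drops (Lemma~\ref{Lem:NotSelfReturning}), and separate handling of the \emph{self-returning} case via Reeb and semi-Reeb regions (Definitions~\ref{Def:Reeb}, \ref{Def:SemiReeb} and Lemmas~\ref{Lem:SelfReturningHoriz}--\ref{Lem:ReebIsotopy}). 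Your bulk-push idea is a step in the right direction, but it corresponds only to a fragment of the maximal-enlargement move and does not by itself yield monotonicity.
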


\subsection{The size of a pairing system}
\label{SubSec:SizePairing}

Pairing systems are a very natural way of specifying standard 1-manifolds in surfaces.
In this section, we investigate the amount of data required to specify a pairing system.

We will be given a handle structure $\calH$ for a surface $S$. We will also be given a standard 1-manifold $C$, as a pairing system. Recall from above that this means that each handle is viewed as a polygon, and then a pairing system is given in each polygon. We therefore introduce the following definition.

\begin{definition}
Let $P$ be a polygon, and let $C$ be a union of disjoint properly embedded arcs in $P$, disjoint from the vertices of $P$. Then a \emph{simplifying disc} for $C$ is a disc $D$ embedded in $P$ satisfying the following:
\begin{enumerate}
\item $\partial D$ is the union of two arcs $\alpha$ and $\beta$ that intersect at their endpoints;
\item $\alpha = D \cap C$;
\item $\beta = D \cap \partial P$ is a subset of an edge of $P$.
\end{enumerate}
The simplification number $\mathrm{simp}(C,P)$ is the number of simplifying discs.
\end{definition}

When we view a 0-handle $H_0$ of $\calH$ as a polygon, then any simplifying disc for $C \cap H_0$ in the above sense is a simplifying disc for $C$ in the sense of Definition \ref{Def:SimpNumber}. Hence, $\mathrm{simp}(C) \geq \mathrm{simp}(C, \calH^0)$, where the latter is the sum of $\mathrm{simp}(C, H_0)$ over each 0-handle $H_0$.


The following lemma bounds the number of parallelism classes of arcs in a polygon in terms of their simplification number.

\begin{lemma}
\label{Lem:ArcTypesInPolygon}
Let $P$ be a polygon with $k$ sides. Let $C$ be a non-empty union of disjoint properly embedded arcs. Then the number of parallelism classes among the components of $C$ is at most
$2(k + \mathrm{simp}(C,P)) - 3$.
\end{lemma}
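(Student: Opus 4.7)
The plan is to choose one arc per parallelism class, producing a subcollection $C' \subseteq C$ whose number of components $n$ equals the number of parallelism classes. Since $P$ is a disc, cutting along $C'$ yields exactly $n+1$ disc regions. For each region $R$, let $k(R)$ be the number of its sides lying on $C'$; by alternation around $\partial R$ of arc-sides and $\partial P$-segments, this also equals the number of sides of $R$ lying on $\partial P$. Summing, $\sum_R k(R) = 2n$ and the number of regions is $n+1$, so $\sum_R (k(R)-1) = n-1$.

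Call a region $R$ \emph{vertex-free} if every $\partial P$-side of $R$ lies in the interior of a single edge of $P$. The classification proceeds by distinguishing regions according to whether $k(R) \in \{1,2\}$ or $k(R) \geq 3$ (no region has $k(R) = 0$ since $C'$ is non-empty), and in the former case whether they are vertex-free. The central claim is that every vertex-free region $R$ with $k(R) \leq 2$ contains a simplifying disc for $C$. If $k(R) = 1$ then $R$ is bounded by an arc of $C'$ and a single edge-segment of $P$, and either $R$ is already a simplifying disc for $C$, or an innermost arc of $C$ inside $R$ cuts one off. If $k(R) = 2$ with boundary arcs $\alpha,\beta \in C'$, then either some arc of $C \setminus C'$ inside $R$ has both endpoints on one $\partial P$-side (and an innermost such cuts off a simplifying disc for $C$), or every arc of $C \setminus C'$ in $R$ runs between the two opposite $\partial P$-sides; in the latter case one can reparametrise $R$ as a rectangle $[0,1]^2$ simultaneously making $\alpha$, $\beta$, and all these intermediate arcs horizontal, witnessing $\alpha \sim \beta$ and contradicting the choice of $C'$. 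Since distinct regions are disjoint in $P$, distinct vertex-free regions contribute distinct simplifying discs for $C$.

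Writing $A$ for the number of vertex-free regions with $k(R) \leq 2$, $B$ for the number of non-vertex-free regions with $k(R) \leq 2$, and $D$ for the number of regions with $k(R) \geq 3$, one has $A + B + D = n+1$, $A \leq \mathrm{simp}(C,P)$, and $B \leq k$ since each non-vertex-free region of $k(R) \leq 2$ contains at least one vertex of $P$ on its boundary and each vertex of $P$ lies on the boundary of exactly one region. From $\sum_R(k(R)-1) = n-1$ together with $k(R)-1 \geq 2$ whenever $k(R) \geq 3$, we also get $2D \leq n-1$. Combining these inequalities,
\[
n+1 \;=\; A + B + D \;\leq\; \mathrm{simp}(C,P) + k + \frac{n-1}{2},
\]
which rearranges to $n \leq 2(k + \mathrm{simp}(C,P)) - 3$, as claimed. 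I expect the main obstacle to be the transitivity argument in the $k(R) = 2$ case above: one must verify carefully that disjoint arcs of $C \setminus C'$ traversing the rectangle $R$ admit a simultaneous reparametrisation making them horizontal in a single product structure $[0,1]^2 \to R$, so that $\alpha$ and $\beta$ are parallel in the precise sense defined in the paper. Once this standard but slightly delicate parametrisation step is in hand, the rest of the argument is purely combinatorial bookkeeping.
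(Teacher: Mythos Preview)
Your proof is correct and follows essentially the same approach as the paper: pick one representative per parallelism class, cut the polygon into regions, and use an Euler-count to bound the number of regions with few arc-sides by $k + \mathrm{simp}(C,P)$. The only organisational difference is that the paper first subdivides $P$ by inserting a vertex in each simplifying disc for $C$ (so that $\overline{C}$ has no simplifying discs in the subdivided polygon $P'$ with $v = k + \mathrm{simp}(C,P)$ vertices) and then runs a clean complexity argument with no case analysis, whereas you keep the original polygon and instead split the low-$k(R)$ regions into vertex-free (bounded by $\mathrm{simp}(C,P)$) and non-vertex-free (bounded by $k$); these are two presentations of the same idea.
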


\begin{proof}
We start by subdividing the edges of the polygon $P$, by placing an extra vertex in each simplifying disc for $C$. The resulting polygon $P'$
has $v = k + \mathrm{simp}(C,P)$ vertices. Let $\overline{C}$ be the arcs obtained by taking one representative of each arc type of $C$.
Note that $\overline{C}$ has no simplifying discs in $P'$.
Then $P'$ is cut into discs by $\overline{C}$. For each such disc, define its \emph{complexity} to be the number of arcs of $\overline{C}$ in its boundary, 
plus twice the number of vertices of $P'$ that it contains, minus $2$. Then the sum of the complexities of these discs is equal to $2 |\overline{C}| + 2 v - 2 |P' \cut \overline{C}| = 2v - 2$. 
A disc with negative complexity would have a single arc of $\overline{C}$ in its boundary and no vertices of $P'$ and hence would be a simplifying disc for $\overline{C}$, which does not exist. 
A disc with zero complexity must have two arcs of $\overline{C}$ in its boundary and no vertices,
and therefore these two arcs are parallel. However, by construction, no two arcs of $\overline{C}$ are parallel. Hence, the number of components of 
$P' \cut \overline{C}$ is at most $2v-2$. So, the number of components of $\overline{C}$ is at most $2v-3$.
\end{proof}

The following lemmas are immediate consequences.

\begin{lemma} 
\label{Lem:NumberOfPairingsCurve}
Let $\calH$ be a handle structure for a compact surface $S$. 
Let $C$ be a standard 1-manifold properly embedded in $S$. Then the number of pairings in its associated pairing system
is bounded above by $9|\calH^1| + 2\mathrm{simp}(C) \leq 9||\calH|| + 2\mathrm{simp}(C)$.
\end{lemma}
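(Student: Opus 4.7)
The plan is to bound the contributions to the pairing system coming from the 2-cells of $\calH^1$ and of $\calH^0$ separately.

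For a 1-handle $H_1$, the standardness hypothesis forces every arc of $C \cap H_1$ to respect the product structure and run parallel to the core. All such arcs are therefore mutually parallel, so they form a single parallelism class and contribute at most one pairing per 1-handle. The total contribution from $\calH^1$ is thus at most $|\calH^1|$.

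For a 0-handle $H_0$, I would view it as a polygon whose sides are the 1-cells of the underlying cell structure lying on $\partial H_0$, and write $k(H_0)$ for the number of sides. Applying \reflem{ArcTypesInPolygon} to $C \cap H_0$ (when it is nonempty) bounds the number of parallelism classes, and hence the number of pairings contributed by $H_0$, by $2(k(H_0) + \mathrm{simp}(C, H_0)) - 3$. Summing over all 0-handles, discarding the negative term, and invoking the identity $\sum_{H_0} k(H_0) = 4|\calH^1|$ that was established inside the proof of \reflem{Number1Cells}, I obtain a total contribution from $\calH^0$ of at most
\[
8|\calH^1| + 2\,\mathrm{simp}(C,\calH^0).
\]
Every polygon-style simplifying disc for $C \cap H_0$ is a simplifying disc for $C$ in the sense of \refdef{SimpInHS}, as is recorded immediately after that definition; hence $\mathrm{simp}(C,\calH^0) \le \mathrm{simp}(C)$.

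Adding the two contributions gives $9|\calH^1| + 2\,\mathrm{simp}(C)$, and the second inequality follows from $|\calH^1| \le \|\calH\|$. I do not expect any real obstacle here: the proof is essentially bookkeeping. The only place requiring a touch of care is matching up the polygon notion of a simplifying disc inside a 0-handle with the handle-theoretic notion used to define $\mathrm{simp}(C)$, but this matching has already been recorded in the excerpt.
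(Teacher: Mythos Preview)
Your proposal is correct and follows essentially the same route as the paper: bound the $1$-handle contribution by $|\calH^1|$, view each $0$-handle as a polygon with $2|\calH^1\cap H_0|$ sides, apply \reflem{ArcTypesInPolygon}, sum using $\sum_{H_0} k(H_0)=4|\calH^1|$, and finish with $\mathrm{simp}(C,\calH^0)\le\mathrm{simp}(C)$. The only cosmetic difference is that you cite the identity $\sum k(H_0)=4|\calH^1|$ from the proof of \reflem{Number1Cells}, whereas the paper re-derives it on the spot from the fact that each $1$-handle meets the $0$-handles in two arcs.
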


\begin{proof} 
Note that $C$ intersects only the 0-handles and 1-handles of $\calH$. In each 1-handle, there is at most one parallelism class of arcs in $C$. So we focus on the 0-handles.
Each 0-handle $H_0$ can be viewed as a polygon with $2|\calH^1 \cap H_0|$ sides. Each 1-handle has two components of intersection with the 0-handles. Hence, the sum, over all 0-handles, of the number of sides is $4|\calH^1|$. So, by Lemma \ref{Lem:ArcTypesInPolygon}, the number of arc types of $C$ in the 0-handles is at most $8|\calH^1| + 2 \mathrm{simp}(C,\calH^0)$. As observed above, $\mathrm{simp}(C,\calH^0) \leq \mathrm{simp}(C)$. The lemma follows immediately.
\end{proof}

\begin{definition}
Let $\calH$ be a handle structure for a compact surface $S$. 
Let $C_0$ and $C_1$ be disjoint connected standard 1-manifolds properly embedded in $S$. Then they are \emph{normally parallel} if there is an 
embedding of $C \times [0,1]$ in $S$ for some 1-manifold $C$, such that $C \times \{ 0 \} = C_0$, $C \times \{ 1 \} = C_1$, and for each $t \in [0,1]$,
$C \times \{ t \}$ is standard and properly embedded.
\end{definition}

\begin{lemma}
\label{Lem:NumberOfCurveComponents}
Let $\calH$ be a handle structure of a compact surface $S$. Let $C$ be a standard 1-manifold properly embedded in $S$. Suppose that no two components of $C$ are normally parallel. Then $|C|$ is bounded above by $10 ||\calH|| + 2 \mathrm{simp}(C)$.
\end{lemma}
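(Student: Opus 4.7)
The plan is to combine \reflem{NumberOfPairingsCurve} with an extremality argument showing that every component of $C$ has an arc that is outermost in some parallelism class's stack.

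By \reflem{NumberOfPairingsCurve}, the total number of parallelism classes of $C$ across the $0$- and $1$-handles of $\calH$ is at most $9||\calH||+2\,\mathrm{simp}(C)$. Within each class $v$, the arcs of $C$ in $v$ form a linearly ordered stack, and I call the arc at the top of the stack the \emph{top-extremal} arc of $v$. There is exactly one top-extremal arc per class, so the total number of top-extremal arcs is also at most $9||\calH||+2\,\mathrm{simp}(C)$.

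The crux is to show that every component $c$ of $C$ contains at least one top-extremal arc. Suppose for contradiction that it does not. Then in every parallelism class $v$ visited by $c$, the topmost arc of $c$ in $v$ sits strictly below the top of $v$'s stack, and I let $c^+_v$ denote the component of $C$ containing the arc immediately above; by construction $c^+_v \neq c$. The heart of the argument is then to show that $c^+_v$ is independent of $v$. This is an endpoint-matching argument: whenever the walk of $c$ crosses a shared edge $e$ of the handle structure, the set $C \cap e$ carries an intrinsic linear order, and ``the point one step above a given $p \in C \cap e$'' is the same whether viewed from either side of $e$. Consequently, the upper-neighbour arcs on either side of $e$ share an endpoint and lie in a single component of $C$, so iterating around the walk of $c$ forces all the $c^+_v$ to agree with a single component $c^+$. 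The parallelism strips between $c$ and $c^+$ in each class then assemble into an embedded product in $S$, so $c$ and $c^+$ are normally parallel, contradicting the hypothesis. Sending each component of $C$ to one of its top-extremal arcs therefore gives an injection, whence $|C| \leq 9||\calH||+2\,\mathrm{simp}(C) \leq 10||\calH||+2\,\mathrm{simp}(C)$.

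The main technical obstacle is the endpoint-matching step when $c$ passes through some class $v$ more than once, i.e.\ has multiple arcs in $v$'s stack; this occurs in particular for a closed curve wrapping multiply through a $1$-handle. Then one must track $c$'s topmost arc in each class separately and verify that, as the two endpoints of this topmost arc continue into adjacent handles---possibly into arcs of $c$ at positions other than the topmost in the adjacent class---the upper-neighbour identification still propagates consistently via the orderings on the shared edges. Arc components require only a mild adaptation: the endpoints of $c$ on $\partial S$ are treated as additional extremal features of the configuration.
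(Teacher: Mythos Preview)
Your argument has a real gap, essentially the one you flag in your last paragraph without resolving. The endpoint-matching establishes only that the point of $C\cap e$ immediately adjacent to $p$ is intrinsic to the edge $e$; it does not establish that the arc on the other side containing that point is the upper neighbour \emph{within its parallelism class}. For that you would need the per-class ``top'' choices to agree across every shared edge, which is a transverse-orientability condition on the parallelity $I$-bundle of $S\cut C$, and this fails precisely when some component of that bundle is a M\"obius band. Even granting compatible orientations, your walk along $c$ does not connect $c^+_v$ to $c^+_{v'}$ when the topmost arc of $c$ in $v$ continues to a non-topmost arc of $c$ in $v'$: the upper neighbour you are then tracking in $v'$ may well be another arc of $c$, not an arc of $c^+_{v'}$. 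Concretely, if some component $B$ of the parallelity bundle is a M\"obius band with $\partial_h B = c$, then $c$ passes through every class it meets at least twice, with adjacent arcs of $c$ flanking each parallelity handle of $B$; such a $c$ need not possess any top-extremal arc and is not normally parallel to anything, so the injection you seek does not exist in general.

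The paper's proof is quite different and sidesteps these issues. It bounds the number of complementary regions $|S\cut C|$ by counting non-parallelity $0$-handles (via \reflem{NumberOfPairingsCurve}) and observing that any region consisting entirely of parallelity handles is either an annulus (forbidden by the hypothesis) or a M\"obius band; the M\"obius bands are absorbed into $b_1(S;\mathbb{Z}_2)\le ||\calH||$, and the elementary inequality $|S\cut C|\ge |C|-b_1(S;\mathbb{Z}_2)$ finishes the count. Those M\"obius-band regions are exactly the obstruction to your extremal-arc approach, and the $b_1$ term is what accounts for them.
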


\begin{proof}
It is a general fact that $|S \cut C| \geq |C| - b_1(S;\mathbb{Z}_2)$. The reason is that if we cut $S$ along $C$ one component at a time, then we either increase the number of complementary regions or we find an element of $H^1(S;\mathbb{Z}_2)$ that is linearly independent from previously found elements. Furthermore, $b_1(S;\mathbb{Z}_2)$ is bounded above by $|\calH^1| \leq ||\calH||$. As argued in the proof of  Lemma \ref{Lem:NumberOfPairingsCurve}, the number of arc types of $C \cap \calH^0$ is bounded above $8|\calH^1| + 2\mathrm{simp}(C)$. Hence, the number of 0-handles of $S \cut C$ that do not lie between parallel normal arcs of $C$ is at most $|\calH^0| + 8 |\calH^1| + 2\mathrm{simp}(C)$. Consider any component of $S \cut C$ that does not contain at least one such 0-handle. It cannot be an annulus or disc, since it would then co-bound normally parallel components of $C$. Hence, it must be a M\"obius band, which therefore contributes to $b_1(S;\mathbb{Z}_2)$. Therefore, $|S \cut C|$ is at most $8||\calH|| + 2\mathrm{simp}(C) + b_1(S;\mathbb{Z}_2)$. We deduce that $|C|$ is bounded above by $10||\calH|| + 2\mathrm{simp}(C)$.
\end{proof}



We now consider how much information is required to specify a pairing system giving a 1-manifold $C$. The method for giving a pairing system is explained in Section \ref{Sec:Intro}. Each 1-cell must be oriented. The number of 1-cells is $6|\calH^1| \leq 6||\calH||$. The number of intersection points between each 1-cell and $C$ must be given. The total number of intersection points is $2w(C) + |\partial C|$, since each component of $C \cap \calH^1$ contributes $2$ to this sum, and each component of $\partial C$ contributes $1$. Hence, the maximal number of intersection points with any 1-cell lies between $(2w(C) + |\partial C|)/6||\calH||$ and $2w(C) + |\partial C|$. So, to give these numbers of intersection points requires of the order of $||\calH|| \log (w(C) + |\partial C|)$ bits of data. Finally, there is a pairing for each arc type, and the number of arc types lies between $\max \{ 1, \mathrm{simp}(C) \}$ and $9||\calH|| + 2\mathrm{simp}(C)$, by Lemma \ref{Lem:NumberOfPairingsCurve}. Hence, the total amount of data is bounded above and below by linear functions of $(||\calH|| + \mathrm{simp}(C)) \log (w(C) + |\partial C|)$.

\subsection{A vector representation of normal curves and surfaces}
\label{Sec:Vector}

In this subsection, we present an alternative way of specifying a standard 1-manifold $C$ in a surface $S$ with a handle structure $\calH$.
Let $\calX$ be the cell structure for $S$ that is associated with $\calH$. We subdivide $\calX$ by
adding vertices to the interior of the 1-cells, one in the boundary of each simplifying disc, disjoint from $C$. After this has been done, each arc of intersection
between $C$ and each 2-cell joins distinct edges of the 2-cell. Thus, we may encode $C$ as follows. For each 2-cell
and for each pair of distinct edges of that 2-cell, we simply count the number of arcs of $C$ running between these
two edges. This list of non-negative integers is the \emph{vector} $(C)$. Thus, we can specify $C$ simply by 
specifying the subdivision of the 1-cells of the cell structure of $\calH$ and by giving the vector $(C)$.

Note that the amount of information required to specify a standard 1-manifold $C$ in this way is at most a polynomial function of $\mathrm{simp}(C)$, $||\calH||$, and $\log (w(C) + |\partial C|)$, 
for the following reason. Before the subdivision of the cell structure, the total number of 1-cells was $6 |\calH^1|$ by Lemma \ref{Lem:Number1Cells}.
The number of subdivisions that we perform is at most $\mathrm{simp}(C)$, and each such subdivision increases the number of 1-cells by
$1$. So afterwards, this number is at most $6|\calH^1| + \mathrm{simp}(C)$. Thus, the number of 1-cells in the boundary of the 2-cells is at most $12|\calH^1| + 2\mathrm{simp}(C)$.
So the number of arc types in the 2-cells is at most $(12 |\calH^1| + 2 \mathrm{simp}(C))^2$. For each arc type, we specify the number of arcs of this type,
and the total number of arcs is at most $2 w(C) + |\partial C|/2$, because each arc of $C$ alternates between 0-handles and 1-handles of $\calH$,
and the number of times it runs over the 1-handles is its weight. Note that the addition of $|\partial C|/2$ is required to deal with arc components of $C$, each 
of which has one more component of intersection with the 0-handles than with the 1-handles. 
Hence, the number of digits required to specify each co-ordinate of $(C)$ is at most $\log (2 w(C) + |\partial C|/2) $.

\section{The algorithm of Agol-Hass-Thurston}
\label{Sec:AHT}

A crucial tool that is used in this paper is the algorithm of Agol, Hass and Thurston \cite{AHT}. We describe it in this section. The original use of the algorithm
was to determine the number of components of a normal 1-manifold in a triangulated surface, and, in one higher dimension, the number of components
of a normal surface in a triangulated 3-manifold. However, it is a very general algorithm with applications in many different situations in low-dimensional
topology. (See, for example, \cite{Lackenby:Efficient} where it is used in several ways.)

A \emph{pairing} is an order-preserving or order-reversing bijection $[a,b] \cap \mathbb{Z} \rightarrow [c,d] \cap \mathbb{Z}$ for integers $a \leq b$ and $c \leq d$. For an integer $x$ and pairing $g$, we only write $g(x)$ if $x$ is in the domain of $g$. Similarly, we only write $g^{-1}(x)$ if $x$ is in the domain of $g^{-1}$.
The input to the algorithm is:
\begin{enumerate}
\item non-negative integers $N$ and $r$;
\item a collection of $r$ pairings $g_i \colon [a_i, b_i] \cap \mathbb{Z} \rightarrow [c_i, d_i] \cap \mathbb{Z}$, where $1 \leq a_i \leq b_i \leq N$ and $1 \leq c_i \leq d_i \leq N$.
\end{enumerate}
Two integers $x$ and $y$ are said to be \emph{in the same orbit} of these pairings if there is a sequence $i_1, \dots, i_k$ of integers between $1$ and $r$ and integers $\varepsilon_1, \dots, \varepsilon_k \in \{-1, 1\}$ such that $g_{i_k}^{\varepsilon_k} \dots g_{i_1}^{\varepsilon_1}(x) = y$. This forms an equivalent relation, and the equivalence classes are called \emph{orbits}.

The following is the basic algorithm that Agol, Hass and Thurston provided. They also gave an enhanced version that we will describe later.

\begin{theorem}[{\sc Basic AHT}]
There is an algorithm that takes, as its input, non-negative integers $N$ and $r$ and a collection of pairings as above, and provides as its output the number of orbits. The running time is bounded above by a polynomial function of $r$ and $\log N$.
\end{theorem}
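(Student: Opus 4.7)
The plan is to iteratively simplify the pairing system while preserving the orbit equivalence relation, until reaching a configuration from which orbits can be counted directly. Since each pairing is specified by the four integers $a_i, b_i, c_i, d_i$ and an orientation bit ($O(\log N)$ bits total), the basic operations on pairings — composition, inversion, restriction to a sub-interval, splitting at a point — each cost $O(\log N)$ time. Thus, as long as the whole procedure performs only $\mathrm{poly}(r, \log N)$ such operations, the bound follows.

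First I would record the at most $4r$ endpoints $\{a_i, b_i, c_i, d_i\}$, partitioning $\{1, \dots, N\}$ into at most $4r+1$ atomic intervals; the interior of each atomic interval meets no endpoint of any pairing's domain or range. Each $g_i$, restricted to an atomic interval contained in its domain, is just a translation or reflection mapping it onto a sub-interval of some atomic interval. I would then enter a reduction loop of Euclidean-algorithm flavour. The prototypical move: if a pairing $g_i \colon [a_i,b_i] \to [c_i,d_i]$ has domain and range that overlap (so $g_i$ acts like a translation by $c_i - a_i$ on a large set), then composing $g_i$ with itself, or with a neighbouring pairing $g_j$ whose domain overlaps $[c_i, d_i]$, produces a new pairing whose translation distance is a remainder in a Euclidean step and is therefore smaller. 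Truncated and old pairings made redundant by this step are discarded so that the total number of pairings stays polynomial in $r$. Additionally, pairings that would identify entire atomic intervals are recorded in a union-find structure on the atomic intervals themselves rather than being kept in the pairing list.

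Once no such reduction applies, the remaining pairings are supported on intervals shorter than the smallest inter-endpoint distance, so the equivalences they impose are already fully recorded at the atomic-interval level; the orbit count is then the number of classes in the union-find structure on the $\mathrm{poly}(r)$ meta-elements, possibly adjusted for any residual short pairings. The main obstacle will be picking the reduction rule together with a potential function that provably decreases geometrically, so that only $O(\log N)$ rounds are needed. A naive potential such as $\sum (b_i - a_i)$ admits only unit decreases and would give a bound polynomial in $N$, not $\log N$; the right potential is geometric, roughly $\sum \log(b_i - a_i + 1)$, with each reduction halving the translation distance of some pairing in the manner of the Euclidean algorithm. Equally subtle is controlling the creation of new pairings: one must show that each composition either cancels an existing pairing or strictly decreases the potential, so the total number of pairings in play at any time, and hence the per-step cost, stays bounded by $\mathrm{poly}(r, \log N)$.
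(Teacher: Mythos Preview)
The paper does not prove this theorem; it is quoted from Agol, Hass and Thurston \cite{AHT} and used as a black box throughout Section~\ref{Sec:AHT}. So there is no in-paper proof to compare against.

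Your plan is in the right spirit: the original AHT argument is indeed a Euclidean-algorithm-style reduction on the pairing system, and the $\mathrm{poly}(\log N)$ bound comes from compositions of overlapping pairings that shrink some relevant quantity geometrically. But you have correctly self-identified the gap, and it is the heart of the matter. Choosing the reduction rule and the potential so that (a) the total number of live pairings stays $\mathrm{poly}(r)$ and (b) some quantity drops by a constant factor per round is exactly where the work lies, and your proposal stops short of specifying either. In \cite{AHT} the moves (``transmission'' and its companions) are chosen with care, and the termination argument is not simply $\sum \log(b_i - a_i + 1)$; one must track how splitting one pairing can create new ones and show this is compensated. Your union-find layer for whole-interval identifications is a reasonable bookkeeping device but does not by itself bound the number of rounds. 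So: right overall shape, but what you have is a plan rather than a proof, and the missing piece is precisely the one you flag.
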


As an immediate consequence of this and Lemma \ref{Lem:NumberOfPairingsCurve}, we obtain the following, which is a minor extension of a result of Agol, Hass and Thurston \cite[Corollary 13]{AHT} from normal 1-manifolds to standard 1-manifolds. The crucial observation is that a standard 1-manifold comes equipped with a pairing system, and orbits of this system correspond to components of the 1-manifold.

\begin{theorem}[{\sc Number of components of standard 1-manifold}]
\label{Thm:NumberComponentsCurves}
There is an algorithm that takes, as its input, a triangulation $\calT$ or handle structure $\calH$ for a compact surface $S$ and a standard 1-manifold $C$, and provides as its output the number of components of $C$. The running time is bounded above by a polynomial function of $\mathrm{simp}(C)$, $\log (w(C) + |\partial C|)$ and $|\calT|$ or $||\calH||$.
\end{theorem}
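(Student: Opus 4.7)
The plan is to reduce counting components of $C$ to counting orbits via the Basic AHT algorithm. View $C$ as a graph $G$ whose vertices are $X = C \cap \calC^1$ (using the cell structure $\calC$ underlying $\calH$) or $X = C \cap \calT^1$ (in the triangulation case), and whose edges are the individual arcs of $C$ lying in each 2-cell. Because $C$ is standard, each such arc has both endpoints in $X$, and every component of $C$ meets $X$: circle components must traverse 1-handles or triangles with non-trivial intersection pattern by standardness, and arc components have their $\partial S$-endpoints in $X$. Hence components of $C$ correspond bijectively to connected components of $G$.

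I would next label the points of $X$ by consecutive integers in $[1,N]$ with $N \leq 2w(C) + |\partial C|$, by fixing an order on the 1-cells and concatenating the intersection points on each 1-cell according to its given orientation. Each parallelism class of arcs in a 0-handle, 1-handle, or triangle (supplied in the input pairing system) then translates directly into one AHT pairing $[a,b] \cap \ZZ \to [c,d] \cap \ZZ$ between the intervals of labels at its two ends, order-preserving or order-reversing according to whether the relevant edge orientations are consistent around the 2-cell. By Lemma~\ref{Lem:NumberOfPairingsCurve} (and its triangulation analogue obtained from Lemma~\ref{Lem:ArcTypesInPolygon}), the total number of AHT pairings produced is $O(\|\calH\| + \mathrm{simp}(C))$ or $O(|\calT| + \mathrm{simp}(C))$, and each can be written down using $O(\log(w(C) + |\partial C|))$ bits, so the translation step is efficient.

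Two integers then lie in the same orbit of these pairings if and only if the corresponding vertices of $G$ lie in the same connected component, because each AHT pairing matches, integer by integer, the two endpoints of one arc in a parallel family, and these are exactly the edges of $G$. Invoking the Basic AHT algorithm therefore returns precisely the number of components of $C$. Since its running time is polynomial in the number of pairings and in $\log N$, both of which are polynomially bounded in $\|\calH\|$ (or $|\calT|$), $\mathrm{simp}(C)$, and $\log(w(C) + |\partial C|)$, the overall runtime satisfies the stated bound.

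The main point needing care — rather than a genuine obstacle — is the treatment of arc components. Their endpoints in $\partial C \subseteq \partial S$ must be included among the labelled points of $X$, so that the 0-handle pairing corresponding to an arc with an endpoint on $\partial S$ matches that endpoint to the other endpoint of the arc; otherwise arc components of $C$ would be broken into several orbits and over-counted. Including the points of $\partial C$ on the $\partial S$ 1-cells of 0-handle boundaries among the indexed points, and producing the 0-handle AHT pairings accordingly, handles this cleanly.
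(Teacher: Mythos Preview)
Your proposal is correct and follows essentially the same approach as the paper: identify the points of $C \cap \calC^1$ (or $C \cap \calT^1$) with the integers $[1,N]$, translate the input pairing system into at most $O(\|\calH\| + \mathrm{simp}(C))$ AHT pairings via Lemma~\ref{Lem:NumberOfPairingsCurve}, and apply {\sc Basic AHT} to count orbits. Your added discussion of arc components and the graph-theoretic framing is just extra exposition; the paper handles this implicitly by noting $N = |\partial C| + 2w(C)$ in the handle-structure case, so the $\partial S$ 1-cells are already included.
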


\begin{proof} 
The standard 1-manifold $C$ is specified using a pairing system. For each edge $e$ of $\calT$ or each 1-cell $e$ of the cell structure for $\calH$, the quantity $|e \cap C|$ is given as part of the pairing system. Let $N$ equal $\sum_e |e \cap C|$. In the case when we are given a triangulation, $N$ is just the weight of $C$. In the case of a handle structure, $N$ is $|\partial C| + 2 w(C)$. If we pick a total ordering on the edges, then, using the given orientation on the edges, we can identify $[1,N] \cap \mathbb{Z}$ with the points of intersection between $C$ and the edges $e$. Thus we obtain pairings between sub-intervals of $[1,N] \cap \mathbb{Z}$. The number $r$ of these pairings is at most $9||\calH|| + 2 \mathrm{simp}(C)$ in the case where we have a handle structure $\calH$ and is similarly bounded when we have a triangulation $\calT$. Then {\sc Basic AHT} counts the number of orbits, which is the number of components of $C$. Its running time is bounded above by a polynomial function of $\mathrm{simp}(C)$, $\log (w(C) + |\partial C|)$ and $|\calT|$ or $||\calH||$.
\end{proof}

There is an enhanced version of the Agol-Hass-Thurston algorithm, which is as follows. Its input is the non-negative integers $N$ and $r$, the pairings $g_i$ and also a function $\phi \colon [1,N] \cap \mathbb{Z} \rightarrow \mathbb{Z}^m$ for some positive integer $m$. The \emph{$\phi$-value} of an orbit is then the sum of $\phi(x)$, over all elements $x$ in the orbit. One measure of complexity of $\phi$ is $M = \max \{\ |\phi(x)|_1+1 : x \in [1,N] \cap \mathbb{Z} \}$, where $| \cdot |_1$ is the $\ell^1$ norm. Another measure is the number $s$ of integers $x$ in $[2,N]$ for which $\phi(x) \not= \phi(x-1)$. 

\begin{theorem}[{\sc AHT}]
There is an algorithm that takes as its input non-negative integers $N$ and $r$, a collection of $r$ pairings as above, and a function $\phi \colon [1,N] \cap \mathbb{Z} \rightarrow \mathbb{Z}^m$ as above, and provides as its output a list of orbits together with their $\phi$-values. The running time is bounded above by a polynomial function of $r$, $s$, $m$, $\log N$ and $\log M$, where $s$ and $M$ are as defined above.
\end{theorem}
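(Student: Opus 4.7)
The plan is to reduce the enhanced problem to an application of {\sc Basic AHT} on a coarser ``block'' structure, where each block is an interval on which $\phi$ is constant and no pairing subdivides it.

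First I would carry out the preprocessing. I subdivide $[1,N] \cap \mathbb{Z}$ at the (at most) $s$ positions where $\phi(x) \neq \phi(x-1)$ together with the $2r$ positions corresponding to the endpoints $a_i, b_i+1, c_i, d_i+1$ of the domains and ranges of the pairings $g_i$. This produces a partition of $[1,N] \cap \mathbb{Z}$ into at most $s + 2r$ intervals, on each of which $\phi$ is constant, and such that each pairing maps a union of these intervals to a union of these intervals, in an order-preserving or order-reversing fashion. By further subdividing so that corresponding blocks under each $g_i$ have equal length (propagating cuts back and forth across the pairings), I obtain a collection of atomic intervals whose number is bounded by a polynomial in $r$ and $s$; this propagation terminates because each new cut on one side forces at most one new cut on the other, and the pairings have at most $O(r+s)$ endpoints in total to begin with. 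Each atomic interval $I$ carries two pieces of data: its length $\ell(I) \in [1,N]$, storable in $O(\log N)$ bits, and the constant value $\phi_0(I) \in \mathbb{Z}^m$ of $\phi$ on $I$, storable in $O(m \log M)$ bits.

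Next I would invoke {\sc Basic AHT} on the quotient ``pointed'' problem: treat each atomic interval as a single element, with the refined pairings restricted to bijections between atomic intervals. The number of elements is polynomial in $r$ and $s$, and the number of refined pairings is likewise polynomial, so {\sc Basic AHT} produces the orbit partition of the atomic intervals in time polynomial in $r$, $s$, and $\log N$. For each orbit $O$ I then output its $\phi$-value
\[
\sum_{I \in O} \ell(I)\, \phi_0(I) \in \mathbb{Z}^m,
\]
which is the sum of $\phi(x)$ over the integers $x$ lying in the orbit (since $\phi$ is constant on each atomic interval, and the pairings induce a genuine bijection between the integer points of paired atomic intervals). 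Each addition is a vector operation in $\mathbb{Z}^m$ on entries of bit-length $O(\log N + \log M)$, and the total number of additions is polynomial in $r$ and $s$.

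The main obstacle is ensuring that the subdivision process terminates with only polynomially many atomic intervals. The danger is a cascade where each refinement forced on one side of a pairing creates new refinements on the other side, iterated across many pairings. The point to verify is that the total set of cut points is determined by closing the initial $s + 2r$ cuts under the partial bijections $g_i$, and this closure is itself a bounded orbit computation producing at most $O(r(r+s))$ cuts, matching the target running time. Once this is established, the rest is bookkeeping: the $\phi$-values aggregate correctly because the pairings within {\sc Basic AHT} are being applied to atomic blocks of equal length on which $\phi$ is constant, so the contribution $\ell(I)\,\phi_0(I)$ genuinely equals $\sum_{x \in I} \phi(x)$ and summing over an orbit of atomic intervals gives the claimed $\phi$-value.
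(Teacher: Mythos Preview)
The paper does not give its own proof of this theorem; it is quoted from \cite{AHT} and used as a black box throughout Section~\ref{Sec:AHT}.

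Your reduction has a genuine gap at exactly the point you flag as the main obstacle. The closure of the initial $O(r+s)$ cut points under the partial bijections $g_i$ is \emph{not} polynomially bounded in $r$, $s$ and $\log N$. Take a single pairing $g \colon [1,N-1]\cap\mathbb{Z} \to [2,N]\cap\mathbb{Z}$ (shift by one) and a single initial cut at $x=1$: propagation forces a cut at $2$, which lies again in the domain of $g$ and forces a cut at $3$, and so on, producing $N$ atomic intervals. The orbit of a finite set of points under partial bijections on $[1,N]$ can have size $N$, so your claimed $O(r(r+s))$ bound is false. There is a second problem even granting the first: a block-orbit of atomic intervals of common length $\ell$ generally contains several distinct integer-orbits running in parallel, and your sum $\sum_{I\in O}\ell(I)\,\phi_0(I)$ is the total of $\phi$ over all of them, not the $\phi$-value of any single orbit.

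The actual algorithm in \cite{AHT} works quite differently. Rather than refining into atomic intervals, it iteratively modifies the pairings themselves by ``transmissions'': when the range of one pairing meets the domain of another, a portion is replaced by the composite pairing, shrinking the intervals involved. As in the Euclidean algorithm, the interval lengths decrease fast enough that only polynomially many (in $r$ and $\log N$) transmissions are needed; the data of $\phi$ is carried along through these moves so that orbit $\phi$-values can be read off once the pairings have been trivialised.
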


This has many interesting applications. We present one of them now, which strengthens Theorem \ref{Thm:NumberComponentsCurves}.

\begin{theorem}[{\sc Components of standard 1-manifold}]
\label{Thm:ComponentsCurves}
There is an algorithm that takes, as its input, a triangulation $\calT$ or handle structure $\calH$ for a compact surface $S$ and a standard 1-manifold $C$, and provides as its output a list of the components of $C$, given as vectors. The running time is bounded above by a polynomial function of $\mathrm{simp}(C)$, $\log (w(C) + |\partial C|)$ and $|\calT|$ or $||\calH||$. 
\end{theorem}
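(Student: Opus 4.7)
My plan is to reduce the problem to the enhanced AHT algorithm by defining a target vector space $\ZZ^m$ and a function $\phi$ whose $\phi$-value on each orbit encodes the vector $(C_i)$ of the corresponding component $C_i$ of $C$.

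I first set up the AHT input exactly as in the proof of \refthm{NumberComponentsCurves}, but after performing the subdivision of the 1-skeleton from \refsec{Vector} so that every arc of $C$ in every 2-cell joins two distinct edges. Let $N$ be the total number of points of $C$ on the (subdivided) 1-cells, parametrised by $[1,N] \cap \ZZ$ via a chosen total order on the 1-cells combined with their orientations. The pairings $g_1,\dots,g_r$ come from the parallelism classes of arcs in the 2-cells. Both $r$ and $\log N$ are polynomial in $||\calH||$, $\mathrm{simp}(C)$ and $\log(w(C)+|\partial C|)$, by \reflem{NumberOfPairingsCurve} and the bound $N \leq 2w(C)+|\partial C|$. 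By construction, the orbits of these pairings are in bijection with the components of $C$.

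Now let $\ZZ^m$ have one coordinate per pair $(F,\{e,e'\})$, where $F$ is a 2-cell and $\{e,e'\}$ is an unordered pair of distinct edges of $F$; by the counts of \refsec{Vector}, $m$ is polynomial in $||\calH||$ and $\mathrm{simp}(C)$. For each $x \in [1,N] \cap \ZZ$, define
\[
\phi(x) \;=\; \sum_{\alpha \,:\, x \in \partial \alpha} e_{(F(\alpha),\, t(\alpha))},
\]
the sum being taken over the (at most two) arcs $\alpha$ of $C$ with an endpoint at $x$, where $F(\alpha)$ is the 2-cell containing $\alpha$ and $t(\alpha)$ is the unordered pair of edges that $\alpha$ joins. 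Then $|\phi(x)|_1 \leq 2$, so $M \leq 3$. Since each arc $\alpha$ contributes exactly at its two endpoints, both of which lie in the same orbit, the $\phi$-value of the orbit corresponding to $C_i$ equals $2(C_i)$, and dividing by $2$ recovers $(C_i)$.

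The remaining task is to bound $s$. The key observation is that within each pairing interval $[a_i,b_i]$, all points come from arcs of a single parallelism class in one of the adjacent 2-cells, so that 2-cell's contribution to $\phi$ is constant on $[a_i,b_i]$. Consequently $\phi$ can change only at the boundaries of these intervals, yielding $s = O(r)$, which is polynomially bounded. Applying enhanced AHT with this input then runs in time polynomial in $r$, $s$, $m$, $\log N$ and $\log M$, hence in the parameters claimed in the theorem statement; halving each reported $\phi$-value yields the desired list of component vectors. The main obstacle I anticipate is the bookkeeping through the subdivision: one must check that after subdivision the parallelism classes are canonically labelled by pairs $(F,\{e,e'\})$, so that $\phi$ is genuinely locally constant on pairing intervals. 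Once this is verified, the rest is a direct invocation of enhanced AHT.
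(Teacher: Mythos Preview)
Your proposal is correct and follows essentially the same approach as the paper: both reduce to enhanced {\sc AHT} with $\phi$ taking values in $\ZZ^m$ indexed by arc types, so that the $\phi$-value of an orbit is the vector of the corresponding component. The only cosmetic differences are that the paper sets $\phi_i$ to be the indicator of the \emph{domain} of the $i$th pairing (so each arc is counted once rather than twice, avoiding your division by $2$), and the paper indexes $\ZZ^m$ directly by the arc types appearing in the pairing system rather than by all pairs $(F,\{e,e'\})$, which makes the subdivision step you worry about unnecessary.
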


\begin{proof} The 1-manifold $C$ can be given as a pairing system or as a vector $(C)$ as described in Section \ref{Sec:Vector}. If we are given it as a pairing system, we can convert it to a vector $(C)$. Similarly, if we are given $(C)$, we can convert it to a pairing system. We will use both such representations of $C$.

Say that there are $m$ arc types of $C$ in the 2-cells. For each arc type, we will define a function $\phi_i \colon [1,N] \cap \mathbb{Z} \rightarrow \mathbb{Z}$. These will combine to form the function $\phi \colon [1,N] \cap \mathbb{Z} \rightarrow \mathbb{Z}^m$. In other words, the composition of $\phi$ with projection onto the $i$th co-ordinate of $\mathbb{Z}^m$ is $\phi_i$. For the given arc type, this determines a pairing $[a,b] \cap \mathbb{Z} \rightarrow [c,d] \cap \mathbb{Z}$. Define $\phi_i$ to be $1$ on the domain of this pairing and $0$ elsewhere. Then if $C'$ is a component of $C$, then $C'$ corresponds to a subset of $[1,N]$ and if we apply $\phi$ to this subset, then the resulting integer is the co-ordinate of $(C')$ corresponding to this arc type. Note that number $s$ of values of $x$ for which $\phi(x) \not= \phi(x-1)$ is at most twice the number of arc types.

Hence, when we apply {\sc AHT} to this pairing system with this function $\phi$, then the output is a list of the components of $C$, and the $\phi$-value of each component $C'$ gives its vector $(C')$. \end{proof}

\section{Cutting along a standard 1-manifold}
\label{Sec:Cut}

One of the advantages of using handle structures rather than triangulations is that when we cut along a standard 1-manifold, the resulting manifold inherits a handle structure, as follows.

\begin{definition}
\label{Def:InheritedHS}
Let $S$ be a compact surface with a handle structure $\calH$. Let $C$ be a standard 1-manifold properly embedded in $S$. Then the \emph{inherited handle structure} on $S \cut C$ has an $i$-handle for each component of $\calH^i \cut C$. In the case where $S$ is instead given as a triangulation $\calT$, we let $\calH$ be the handle structure dual to $\calT$ and then the inherited handle structure on $S \cut C$ is defined as above.
\end{definition}

The inherited handle structure on $S \cut C$ is frequently not practical to use because it can have a large number of handles. Specifically, its number of 1-handles is at least $w(C)$. However, when $w(C)$ is large, many of the handles of $S \cut C$ have the following form.

\begin{definition}
A \emph{parallelity handle} for $S \cut C$ is a handle in the inherited handle structure that lies between two adjacent normally parallel arcs of $C$ in a handle $H$ of $\calH$. Thus, its boundary is the concatenation of four arcs. Two of these are components of $C \cap H$, and the other two are arcs in $\partial S \cap H$ or $H \cap \calH^1 \cap \calH^0$. The parallelity handle is an $I$-bundle over an interval, with the $(\partial I)$-bundle being its intersection with $C$.
\end{definition}

\begin{definition}
The \emph{parallelity bundle} for $S \cut C$ is the union of its parallelity handles. It is an $I$-bundle over a 1-manifold, with the $(\partial I)$-bundle being its intersection with $C$.
\end{definition}

In general, for an $I$-bundle $B$, its \emph{horizontal boundary} $\partial_h B$ is the $(\partial I)$-bundle. Its \emph{vertical boundary} $\partial_v B$ is $\partial B \cut \partial_h B$.

Note that two distinct components $C_1$ and $C_2$ of $C$ are normally parallel precisely when some component of $S \cut C$ is a component $B$ of the parallelity bundle with $\partial_h B = C_1 \cup C_2$.




\begin{figure}[h]
\centering
\includegraphics[width=0.5\textwidth]{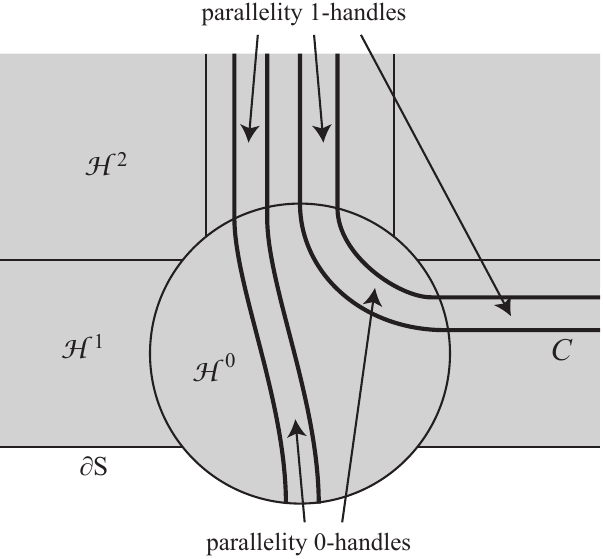}
\caption{Parallelity handles of $S \cut C$}
\label{Fig:ParHandles}
\end{figure}

\begin{definition}
Let $S$ be a compact surface with a handle structure $\calH$. Let $C$ be a standard 1-manifold properly embedded in $S$. Then the \emph{reduced handle structure} $\calH'$  is defined as follows. We start with the handle structure on $S \cut C$ that it inherits. Let $\calB$ be its parallelity bundle. Then $\calH'$ is obtained by removing all the handles in $\calB$ and replacing each component of $\calB$ that misses $\partial S$ and is an $I$-bundle over an interval by a single 1-handle. Each component of $S \cut C$ that is a union of parallelity handles is removed. 
\end{definition}

\begin{remark}
\label{Rem:ReducedHS}
As explained in the definition, components of $S \cut C$ that are a union of parallelity handles are removed. Thus, the reduced handle structure might be not a handle structure for all of $S \cut C$, but only certain components of $S \cut C$. Moreover each component of $\calB$ that intersects $\partial S$ in a single arc is removed, but this does not change the homeomorphism type of the surface.
\end{remark}

Note that by Lemma \ref{Lem:NumberOfPairingsCurve}, the number of handles of $S \cut C$ that are not parallelity handles is at most $O(||\calH|| + \mathrm{simp}(C))$. The remaining handles of the reduced handle structure are components $B$ of the $\calB$ that are disjoint from $\partial S$ and that are an $I$-bundle over an interval. For each such component $B$, $\partial_v B$ is incident to arcs of $C \cap \calH^0$ that are outermost in their parallelism classes. Hence the number of such components of $\calB$ is also at most $O(||\calH|| + \mathrm{simp}(C))$. Therefore,
the number of handles in the reduced handle structure is at most $O(||\calH|| + \mathrm{simp}(C))$.

\begin{theorem}[{\sc Cut along standard 1-manifold}]
\label{Thm:CutAlong1Manifold}
There is an algorithm that takes, as its input, a handle structure $\calH$ for a compact surface $S$ and a standard 1-manifold $C$ properly embedded in $S$, and outputs the reduced handle structure on the relevant components of $S \cut C$. The running time is at most a polynomial function of $||\calH||$, $\mathrm{simp}(C)$, and $\log (w(C) + |\partial C|)$.
\end{theorem}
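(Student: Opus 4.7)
The plan is to avoid constructing the full inherited handle structure on $S \cut C$, since that structure can have $\Omega(w(C))$ handles, whereas the reduced handle structure has only $O(||\calH|| + \mathrm{simp}(C))$ handles as noted just before the theorem statement. The input pairing system already encodes the $O(||\calH|| + \mathrm{simp}(C))$ parallelism classes (by Lemma \ref{Lem:NumberOfPairingsCurve}), and the algorithm works entirely at this collapsed level.

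First I would enumerate the non-parallelity handles. Inside each $0$-handle, I would collapse each parallelism class to a single arc and compute the complementary regions of the resulting picture; these regions are in bijection with the non-parallelity $0$-handles contained in that $0$-handle. A $1$-handle with $n \geq 1$ arcs of $C$ contributes two non-parallelity $1$-handles (the two outermost strips); an empty $1$-handle contributes one non-parallelity $1$-handle. Each parallelism class with $n$ arcs (inside a $0$- or $1$-handle) is represented by a single data record recording its column of $n-1$ interior parallelity regions, together with the bijection between the arcs of $C$ that delimit them and the corresponding arcs on adjacent edges. All of this can be read off the pairing system in polynomial time.

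Second I would determine the gluings between non-parallelity handles. Some gluings occur along $1$-cells of $\calH$ shared by two non-parallelity handles, and these are immediate. The remaining vertical boundary arcs open into the parallelity bundle $\calB$, and for each such arc I need to determine which other non-parallelity handle lies at the opposite end of the component of $\calB$ containing it. To do this efficiently I would set up a pairing system for {\sc AHT}: the integer points are the endpoints of outermost arcs of parallelism classes on $1$-cells of $\calH$; the pairings encode the step across a parallelity column of multiplicity $n-1$ for each parallelism class, with the appropriate order-preserving or order-reversing sign. The number of pairings is $O(||\calH|| + \mathrm{simp}(C))$ and the underlying $N$ is $O(w(C) + |\partial C|)$, so {\sc AHT} runs in the required time. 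An auxiliary function $\phi$ records enough information for each orbit to decide (i) whether the corresponding $\calB$-component is an $I$-bundle over a circle or over an interval, (ii) whether it meets $\partial S$, and (iii) which pair of non-parallelity handles its two ends attach to when it is an interval component.

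Third I would assemble the output. For each $\calB$-component that is an $I$-bundle over an interval missing $\partial S$, I would introduce a new $1$-handle attached to the two non-parallelity handles named by the corresponding orbit. $\calB$-components touching $\partial S$ and closed components of $S \cut C$ consisting entirely of parallelity handles are simply discarded. The main obstacle I expect is the second step: the {\sc AHT} pairings must faithfully reflect how parallelity columns in adjacent $0$- and $1$-handles concatenate, with the right parity conventions coming from order-reversing pairings, so that orbits correspond bijectively to components of $\calB$. Once this encoding is verified, the rest of the construction is bookkeeping on the $O(||\calH|| + \mathrm{simp}(C))$ enumerated non-parallelity handles, and the total running time is polynomial in $||\calH||$, $\mathrm{simp}(C)$ and $\log(w(C) + |\partial C|)$.
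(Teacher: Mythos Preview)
Your proposal is correct and follows essentially the same approach as the paper: compute the non-parallelity part $(S \cut C) \cut \calB$ directly from the collapsed arc picture, then use {\sc AHT} with a weight function $\phi$ to trace the components of the parallelity bundle and determine which pairs of non-parallelity handles they connect. The paper's encoding indexes the integer points by the horizontal boundary arcs of the parallelity handles (so $N = 2\sum_i \max\{0,N_i-1\}$) and lets $\phi$ take values in $\mathbb{Z}^m$ where $m$ counts the vertical boundary components of $\calB$ away from $\partial S$; your encoding is phrased slightly differently but carries the same information, and both yield orbits corresponding to components of $\partial_h\calB$ from which the reduced $1$-handles are read off.
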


\begin{proof} A closely related algorithm is given in \cite[Theorems 9.2 and 9.3]{Lackenby:Efficient} and so we just sketch it here.

Let $\calB$ be the parallelity bundle of $S \cut C$. The first stage in the procedure is to determine the handle structure on $(S \cut C) \cut \calB$, as follows. For each handle $H$ of $\calH$, it is divided into handles $H \cut C$. Those lying in $\calB$ are the ones lying between parallel arcs of $H \cap C$. Thus, we can form $(H \cut C) \cut \calB$ by replacing each arc type of $H \cap C$ by a single arc, and then cutting along the resulting arcs. In this way, we can form the handle structure on $(S \cut C) \cut \calB$, and also identify its components of intersection with $\partial_v \calB$.

Say that there are $k$ arc types of $C$ in the 0-handles and 1-handles of $\calH$. For $1 \leq i \leq k$, say there are $N_i$ arcs of that type. Then there are $\max \{ 0, N_i -1 \}$ parallelity handles of $S \cut C$ between these arcs. Let $N = 2 \sum \max \{ 0, N_i -1 \}$, which is the total number of components of $\partial_h H'$, as $H'$ runs over all parallelity handles. This will be one of the inputs into the {\sc AHT} algorithm. 

Let $m$ be the number of boundary components of $\partial_v \calB$ disjoint from $\partial S$. This will be another input to {\sc AHT}. Note that these components of $\partial (\partial_v \calB)$ lie in $(S \cut C) \cut \calB$, and hence $m$ is bounded above by a linear function of $||\calH||$ and $\mathrm{simp}(C)$. We will define a function $\phi \colon [1,N] \rightarrow \mathbb{Z}^m$. For each integer between $1$ and $m$, we identify it with a specific boundary component of $\partial_v \calB \cut \partial S$.
Consider a 1-handle $H_1$ of $\calH$ attached onto a 0-handle $H_0$. Suppose that $H_1 \cap C$ consists of more than 1 arc. Then between these arcs lie handles of $\calB$. Each such handle $H_1'$ has two components of $\partial_h H'_1$, each of which is identified with an integer $i$ between $1$ and $N$. The handle $H_1'$ is incident to a handle of $H_0 \cut C$ that may or may not be parallelity. If it is not a parallelity handle, then each component of $\partial_h H_1'$ meets a component of $\partial( \partial_v \calB)$, the $j$th component say. Then we define $\phi(i)$ to be $1$ at the $j$ co-ordinate and $0$ at the remaining co-ordinates. For the remaining handles $H'_1$ that are incident to parallelity 0-handles in $H_0$, this incidence defines pairings, each going from a subset of $[1,N]$ to another subset of $[1,N]$.

When we run {\sc AHT}, the output is a collection of orbits. Each orbit corresponds to a component of $\partial_h \calB$. For each such component $\alpha$, we examine its $\phi$-value to determine the location of $\partial \alpha \cut \partial S$. The number of components of $\partial \alpha \cut \partial S$ is $0$, $1$ or $2$. If it is $2$, then the component of $\calB$ containing $\alpha$ is an $I$-bundle over an interval and is disjoint from $\partial S$, and hence forms a 1-handle in the reduced handle structure. On the other hand, if this number is less than $2$, then the component of $\calB$ containing $\alpha$ is removed when forming the reduced handle structure.

We can therefore compute which components of $\calB$ form 1-handles of the reduced handle structure and how they are attached onto $(S \cut C) \cut \calB$.
\end{proof}

\begin{remark} 
\label{Rem:ReducedHSEmbeds}
It is worth stating that the reduced handle structure $\calH'$ on $S \cut C$ is provided not just as an abstract handle structure but also with information about how it embeds in $\calH$. More precisely, for each handle of $\calH'$ that did not come from a component of the parallelity bundle $\calB$, we have an identification between it and the handle of $S \cut C$ that it came from. We can also determine the location of the handles arising from components of $\calB$, in the following sense.
\end{remark}

\begin{theorem}[{\sc Location of parallelity bundle component}]
\label{Thm:LocationParBundle}
There is an algorithm that takes, as its input, the following:
\begin{enumerate}
\item a handle structure $\calH$ for a compact surface $S$;
\item a standard 1-manifold $C$ properly embedded in $S$;
\item a point $p$ in the intersection between $S - C$ and a 1-cell of the associated cell structure on $S$;
\end{enumerate} and provides the following:
\begin{enumerate}
\item the vector(s) in $S$ for the component(s) of $\partial_hB$, where $B$ is the component of the parallelity bundle for $S \cut C$ containing $p$ (or  the zero vector if $p$ does not lie in the parallelity bundle);
\item for each such component $\alpha$ of $\partial_h B$, the location of $\partial \alpha - \partial S$.
\end{enumerate}
The running time is at most a polynomial function of $||\calH||$, $\mathrm{simp}(C)$, and $\log (w(C) + |\partial C|)$.
\end{theorem}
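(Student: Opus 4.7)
The plan is to adapt the AHT-based approach from the proof of Theorem~\ref{Thm:CutAlong1Manifold}, enhancing the function $\phi$ to compute both the vector of each component of $\partial_h B$ and the location of its endpoints off $\partial S$.

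First, I would determine whether $p$ lies in the parallelity bundle $\calB$ by a local check. The 1-cell $e$ containing $p$ lies in the boundary of one or two handles of $\calH$, and the sector of $e$ containing $p$ is bounded either by two consecutive intersection points of $C \cap e$ or by an endpoint of $e$. For each handle $H$ adjacent to $e$, the subhandle of $H \cut C$ on the $H$-side of this sector is a parallelity handle if and only if the sector is bounded by two intersection points of $C \cap e$ whose corresponding arcs in $C \cap H$ lie in the same parallelism class. This is read directly from the pairing system. If no adjacent subhandle is parallelity, I output the zero vector. Otherwise, I identify the one or two parallelity handles adjacent to $p$ together with their at most four $\partial_h$-arcs, each corresponding to a specific integer in the set $[1,N]$ used in the AHT setup of Theorem~\ref{Thm:CutAlong1Manifold}.

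Next, I would run the enhanced AHT algorithm on the same pairings as in Theorem~\ref{Thm:CutAlong1Manifold} with a function $\phi$ whose coordinates split into two blocks. The first block has one coordinate per arc type of $C$ in each 2-cell of $\calC$, with entries chosen so that summing over any orbit recovers the vector of the corresponding component of $\partial_h \calB$. The second block is the $\phi$ from Theorem~\ref{Thm:CutAlong1Manifold}, which records which boundary components of $\partial_v \calB$ disjoint from $\partial S$ are met by each orbit and therefore pinpoints the location of $\partial \alpha \setminus \partial S$. Since orbits of AHT correspond to components of $\partial_h \calB$, the orbits of the arcs identified above are exactly the components of $\partial_h B$; I retrieve their $\phi$-values to output the required vectors and endpoint locations.

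The main obstacle is designing the first block of $\phi$ correctly: each arc of $C$ in the middle of a parallelism class bounds two parallelity handles and is indexed twice in $[1,N]$, so a naive weighting would double-count. A suitable remedy is to assign weight $1$ to a single distinguished $\partial_h$-copy per arc of $C$ (for example, the one appearing first when the parallelism class is traversed in a fixed direction) and weight $0$ to the other copy. Once this bookkeeping is in place, the running-time bound follows by the same argument as in Theorem~\ref{Thm:CutAlong1Manifold}: the number of coordinates of $\phi$ is polynomial in $||\calH||$ and $\mathrm{simp}(C)$, the remaining AHT parameters $r$, $s$, $N$ and $M$ are similarly controlled, and the whole procedure runs in time polynomial in $||\calH||$, $\mathrm{simp}(C)$, and $\log(w(C) + |\partial C|)$.
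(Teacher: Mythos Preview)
Your approach is essentially the paper's: run AHT on the same pairings as in Theorem~\ref{Thm:CutAlong1Manifold} with an enlarged $\phi$ whose extra coordinates record arc types (to recover the vector of each orbit) together with the original $m$ coordinates (to locate $\partial\alpha\setminus\partial S$). The paper streamlines your local check by adding one further coordinate to $\phi$, namely the indicator of the parallelity handle containing $p$; reading this coordinate off the orbit sums tells you directly which orbit(s) correspond to $\partial_h B$. Your version with an explicit local identification of the nearby $\partial_h$-arcs is equally valid.

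There is, however, a genuine slip in your ``main obstacle''. The double-counting worry is unfounded, and your remedy would actually break the computation. An arc $a$ of $C$ in the middle of a parallelism class is indeed indexed twice in $[1,N]$, once for the parallelity handle on each side; but the pairings of Theorem~\ref{Thm:CutAlong1Manifold} respect sides, so these two indices lie in \emph{different} orbits whenever the relevant component of $\calB$ is an annulus, and in the \emph{same} orbit precisely when that component is a M\"obius band, in which case $\partial_h B$ genuinely traverses $a$ twice. In both situations the naive weighting (assign $1$ to the appropriate arc-type coordinate for every index in $[1,N]$) gives exactly the right vector. Your proposed fix---weighting one copy $1$ and the other $0$---would cause one of the two orbits in the annulus case to miss $a$ entirely, and would undercount in the M\"obius case. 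Drop the fix and the argument goes through.
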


\begin{proof} In the proof of Theorem \ref{Thm:CutAlong1Manifold}, $N$ was the number of horizontal boundary components of the parallelity handles for $S \cut C$. Each such handle has an associated arc type of $C$. Say that there are $L$ arc types. Then instead of defining $\phi$ to have image in $\mathbb{Z}^m$, we define it to have image in $\mathbb{Z}^{m+L+1}$. The first $m$ co-ordinates are as in the proof of Theorem \ref{Thm:CutAlong1Manifold}. The next $L$ co-ordinates correspond to the $L$ arc types and record the arc type of a parallelity handle. The final co-ordinate is defined to be $1$ if the handle contains $p$, and zero otherwise. The output of {\sc AHT} is a list of orbits, each of which corresponds to a component of $\partial_h \calB$. For each such component $\alpha$ of $\partial_h \calB$, we can determine whether it lies in the same component of $\calB$ as $p$ by reading off the final component of the $\phi$-value of $\alpha$. We can read off the vector of $\alpha$ by examining the previous $L$ co-ordinates.
\end{proof}

It will be useful to consider the following close relative of the parallelity bundle.

\begin{definition} 
A \emph{semi-parallelity handle} for $S \cut C$ is a handle $H$ of its inherited handle structure such that $\partial H$ is a concatenation of an arc in $C$, an arc in $\calH^0 \cap \calH^1$, an arc in $\partial S$ and an arc in $\calH^0 \cap \calH^1$. It is an $I$-bundle, where the $(\partial I)$-bundle is its intersection with $C \cup \partial S$. The \emph{semi-parallelity bundle} is the union of the semi-parallelity handles. It too is an $I$-bundle, where the $(\partial I)$-bundle is its intersection with $C \cup \partial S$.
\end{definition}

Note that it is straightforward to compute the location of the semi-parallelity bundle, because the number of semi-parallelity handles is at most twice the number of components of $\partial S \cap \calH^1$, and this is at most $2|\calH^1| \leq 2 ||\calH||$.

\section{Fast normalisation of standard 1-manifolds}
\label{Sec:Normalise}

In this section, we prove Theorem \ref{Thm:Normalise1Manifold}. The proof of Theorem \ref{Thm:NormaliseStandardCurves} is entirely analogous and is omitted.

\begin{named}{Theorem \ref{Thm:Normalise1Manifold}}[{\sc Normalise a standard 1-manifold in a handle structure}]
There is an algorithm that takes as its input the following:
\begin{enumerate}
\item a handle structure $\calH$ for a compact surface $S$;
\item a standard 1-manifold $C$ properly embedded in $S$;
\end{enumerate}
and outputs a normal 1-manifold $\overline{C}$ that is obtained from $C$ by an isotopy plus possibly discarding some curves that bound discs and boundary-parallel arcs. The running time for the algorithm is bounded above by a polynomial function of $||\calH||$, $\mathrm{simp}(C)$ and $\log (w(C) + |\partial C|)$.
\end{named}

We are given a handle structure $\calH$ for a compact surface $S$ and a standard 1-manifold $C$ properly embedded in $S$.

We first note that we may assume that no two components of $C$ are normally parallel, via the following procedure. We may compute a list of components of $C$ by {\sc Components of standard 1-manifold} (Theorem \ref{Thm:ComponentsCurves}). We may then replace $C$ by $C_-$, which has a single copy of each normal parallelism type of the components of $C$. Suppose that we can isotope $C_-$ to a normal 1-manifold $\overline{C}_-$. Then we can place $C$ into normal form by replacing each component of $\overline{C}_-$ by a suitable number of copies. So, we will now assume that no two components of $C$ are normally parallel. This is useful because in this situation, the reduced handle structure is a handle structure for a surface homeomorphic to $S \cut C$, except possibly with some M\"obius band components removed. (See Remark \ref{Rem:ReducedHS}.)

Suppose that $C$ is not normal. Then it has a simplifying disc $D$, by Lemma \ref{Lem:NormalNoSimplifying}. If $D$ is disjoint from $\calH^1$, then $D \cap C$ is a boundary-parallel arc, and we can remove this arc and all normally parallel copies of it. Suppose therefore that $D \cap C$ intersects $\calH^1$. By definition of a simplifying disc, this intersection is an arc. We may then isotope $C$ across $D$ and across the incident 1-handle. The resulting 1-manifold $C'$ has smaller weight than $C$. It might not in fact be standard, because it might have a component of intersection with a 0-handle that is a simple closed curve. But in that case, we can remove this component. So, we may assume that $C'$ is standard. Hence, by induction on the weight of $C$, we can normalise it. This is the usual normalisation process.

However, this does not obviously lead to an efficient algorithm, for two reasons. Firstly, the weight of $C'$ is only $w(C)-1$ or $w(C)-2$. Hence, there may need to be of the order of $w(C)$ such isotopies before $C$ is normalised, whereas we require the algorithm to terminate in polynomial time as a function of $\log (w(C) + |\partial C|)$. Secondly, we have seen in Section \ref{SubSec:SizePairing} that the amount of information required to describe a standard curve $C$ is of the order $(\mathrm{simp}(C) + ||\calH||) \log (w(C) + |\partial C|)$. Thus, to prevent this quantity increasing without bound, we would need to control $\mathrm{simp}(C)$. 

Instead of using $w(C)$ as our measure of progress through the algorithm that we will present, we use $\mathrm{simp}(C)$. However $\mathrm{simp}(C')$ might actually be greater than $\mathrm{simp}(C)$. An example is shown in Figure \ref{Fig:IncreaseSimplifyingIndex}. 

\begin{figure}[h]
\centering
\includegraphics[width=0.7\textwidth]{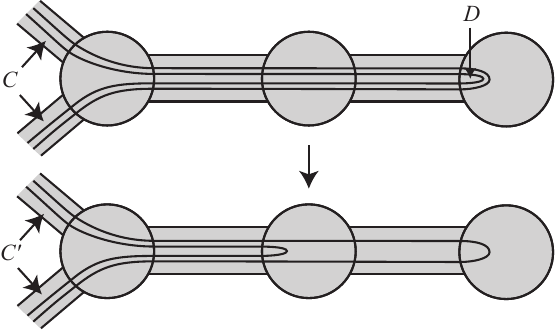}
\caption{Shown is a standard curve $C$ with a simplifying disc $D$. When an isotopy is performed across $D$ and the incident 1-handle, the effect is to increase the simplification number}
\label{Fig:IncreaseSimplifyingIndex}
\end{figure}

In general, this happens as follows. For simplicity, we initially assume that $S$ is closed. Let $H_0$ be the 0-handle of $\calH$ containing $D$. Running parallel to $D \cap C$, there might be another arc of $H_0 \cap C$. This would be part of the boundary of a new simplifying disc for $C'$. But also the new arc of $\calH^0 \cap C'$ created by the isotopy across $D$ and the incident 1-handle may also bound a new simplifying disc $D'$. There are two possibilities for $D'$: either the interior of $D'$ intersects the disc region between $C$ and $C'$ or it is disjoint from it. These two cases are shown in Figure \ref{Fig:NewSimplifyingDisc}. 

\begin{figure}[h]
\centering
\includegraphics[width=0.7\textwidth]{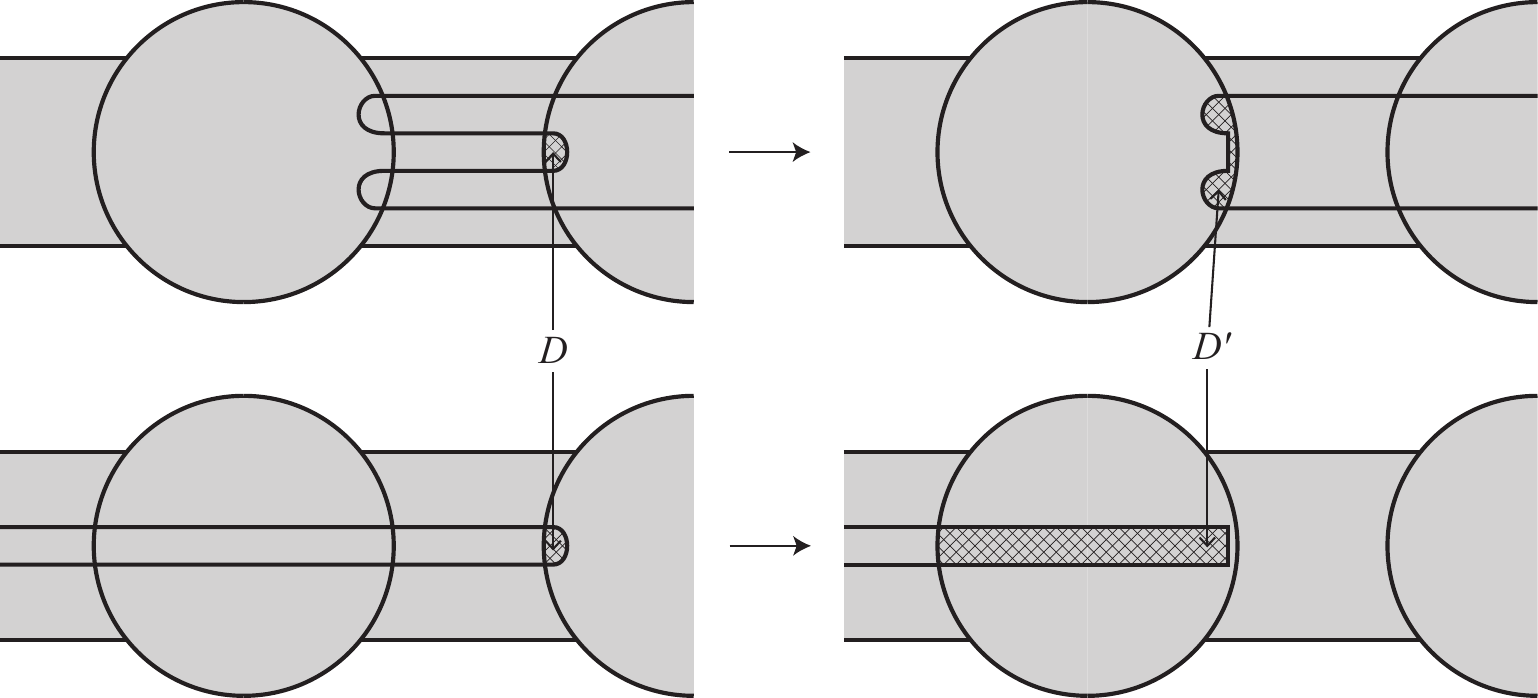} 
\caption{Shown on the left are simplifying discs $D$ for the standard curve $C$. When the associated isotopy of $C$ is performed, new simplifying discs $D'$ are created.}
\label{Fig:NewSimplifyingDisc}
\end{figure}

In the former case, then $D'$ was actually formed from two simplifying discs of $C$, both of which are removed in the isotopy. Thus, in this case, it is possible to deduce that $\mathrm{simp}(C') <  \mathrm{simp}(C)$. So suppose that the interior of $D'$ is disjoint from the region between $C$ and $C'$. Then $D'$ lay in a parallelity handle of $S \cut C$. Thus, we are led to the following definition. 

\begin{definition}
\label{Def:IsotopyRegionClosed}
Let $\calH$ be a handle structure for a closed surface $S$. Let $C$ be a standard 1-manifold properly embedded in $S$. Let $D$ be a simplifying disc for $C$. Then the \emph{isotopy region} associated with $D$ is the union of the following subsets of $S \cut C$:
\begin{enumerate}
\item the simplifying disc $D$;
\item the component $B$ of the parallelity bundle for $S \cut C$ that is incident to $D$;
\item a small regular neighbourhood of $\partial_v B$ in $S \cut C$. 
\end{enumerate}
(See Figure \ref{Fig:IsotopyRegion}.) We say that the \emph{horizontal boundary} of $R$ is $\partial R \cap C \cut \partial D$.
\end{definition}

\begin{figure}[h]
\centering
\includegraphics[width=0.6\textwidth]{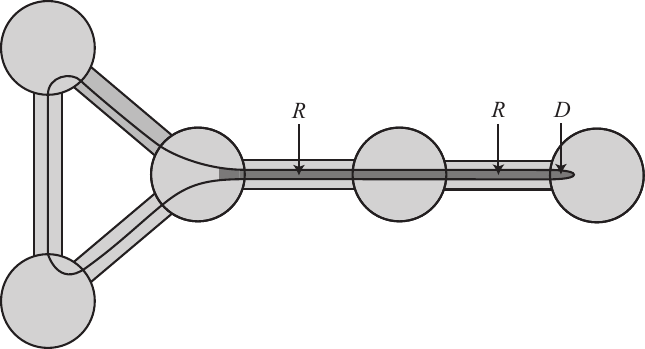}
\caption{The isotopy region $R$ associated with the simplifying disc $D$ is shaded}
\label{Fig:IsotopyRegion}
\end{figure}

The version of the above definition when $S$ has boundary is a bit more complicated. It involves the following type of 0-handle of $S \cut C$.

\begin{definition}
Let $\calH$ be a handle structure for a compact surface $S$. Let $C$ be a standard 1-manifold properly embedded in $S$. A 0-handle of $S \cut C$ is an \emph{interpolating 0-handle} if its boundary is a concatenation of an arc of $C \cap \calH^0$, an arc in $\calH^0 \cap \calH^1$, an another arc of $C \cap \calH^0$, another arc in $\calH^0 \cap \calH^1$ and an arc in $\partial S$.
See Figure \ref{Fig:IsotopyRegionBoundary}.
\end{definition}

\begin{definition}
\label{Def:IsotopyRegion}
Let $\calH$ be a handle structure for a compact surface $S$ with non-empty boundary. 
Let $C$ be a standard 1-manifold properly embedded in $S$. Let $D$ be a simplifying disc for $C$. Then the \emph{isotopy region} $R$ associated with $D$ is the union of the following subsets of $S \cut C$:
\begin{enumerate}
\item the simplifying disc $D$;
\item the component $B_1$ of the parallelity or semi-parallelity bundle that is incident to $D$;
\item if $D$ is an interior-simplifying disc and $B_1$ is incident to an interpolating 0-handle of $S \cut C$, then this 0-handle is also part of $R$ and so is the incident component $B_2$ of the semi-parallelity bundle;
\item a small regular neighbourhood of $\partial_v B_1$ in $S \cut C$, as well as a small regular neighbourhood of $\partial_v B_2$ if $B_2$ is defined. 
\end{enumerate}
(See Figure \ref{Fig:IsotopyRegionBoundary}.) We say that the \emph{horizontal boundary} of $R$ is $\partial R \cap (C \cup \partial S) \cut \partial D$.
\end{definition}

\begin{figure}[h]
\centering
\includegraphics[width=0.6\textwidth]{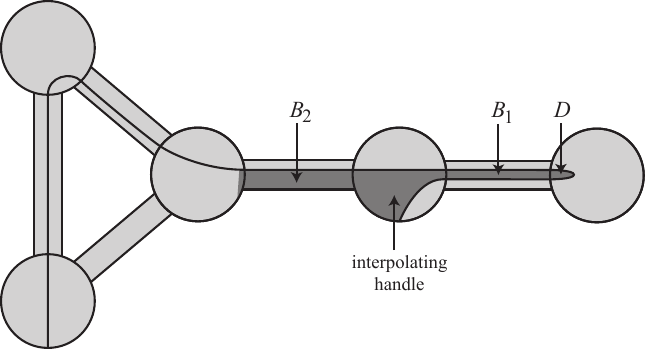}
\caption{The isotopy region associated with a simplifying disc $D$ in the case where $\partial S \not= \emptyset$.}
\label{Fig:IsotopyRegionBoundary}
\end{figure}

The reason why an interpolating handle might be included in the isotopy region is as follows. Suppose that $C$ consists of two parallel copies of the 1-manifold shown in Figure \ref{Fig:IsotopyRegionBoundary}. If we were to isotope one of these copies across $D$ and $B_1$ but no further, then the resulting 1-manifold $C'$ would have higher simplification number than $C$. Although the interior-simplifying disc $D$ would have been removed, an interior-simplifying disc and a boundary-simplifying disc would have been created.

It is also worth pointing out that here the situation for triangulations is somewhat simpler than for handle structures. In the case where $S$ comes with a triangulation, then Definition \ref{Def:IsotopyRegionClosed} is used rather than Definition \ref{Def:IsotopyRegion}. In particular, there is no analogue of an interpolation handle.

The following key lemma states that the simplification number does not go up when an isotopy is performed across an isotopy region.

\begin{lemma}
\label{Lem:SimpDoesNotGoUp}
Let $\calH$ be a handle structure for a compact surface $S$, and let $C$ be a standard properly embedded 1-manifold.
Let $C'$ be the 1-manifold obtained by isotoping $C$ across the isotopy region associated with a simplifying disc $D$. Then $\mathrm{simp}(C') \leq  \mathrm{simp}(C)$ and $w(C') < w(C)$. Furthermore, if $\mathrm{simp}(C') =  \mathrm{simp}(C)$, then there is an arc of $\calH^0 \cap C'$ that is normally parallel to $D \cap C$, and all the components of $C \cap \calH^0$ between these two arcs are normally parallel to $D \cap C$.
\end{lemma}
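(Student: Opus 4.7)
The plan is to argue by a careful local bookkeeping of simplifying discs, matched against the geometry of the isotopy region $R$. I will treat the case $D\cap\calH^1\neq\emptyset$, since when $D$ is disjoint from $\calH^1$ the operation is a removal of a boundary-parallel arc, handled separately in the algorithm.

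\emph{Step 1: Weight decrease.} Pushing $\alpha=D\cap C$ across $D$ and then across the incident 1-handle eliminates two points of $C\cap\calH^1$ (the two endpoints of $\alpha$). Continuing the isotopy through the parallelity bundle $B_1$ (and, in the bounded case, possibly through an interpolating 0-handle and the semi-parallelity bundle $B_2$) only further reduces $|C\cap\calH^1|$. Since at least one 1-handle is traversed, $w(C')<w(C)$.

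\emph{Step 2: Classifying simplifying discs of $C'$.} Every simplifying disc $D''$ of $C'$ falls into one of two classes. \emph{Class (i):} $\operatorname{int}(D'')$ is disjoint from the image of the isotopy region. Since $C$ and $C'$ agree outside $R$ and $\partial D''$ lies outside $R$, such a $D''$ is a simplifying disc of $C$ as well, unchanged by the move. \emph{Class (ii):} $\operatorname{int}(D'')$ meets the image of $R$. Here the key point is that $R$ was engineered to include the entire parallelity bundle $B_1$ on the pushed side: the only way for a new simplifying disc to appear with interior in $R$ is if it spans across the whole swept region, in which case its boundary arc used to bound two distinct simplifying discs of $C$ on either side of $B_1$ (the two-disc collapse depicted on the left of Figure~\ref{Fig:NewSimplifyingDisc}). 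The alternative, informally labelled case (b) in the discussion preceding Definition~\ref{Def:IsotopyRegionClosed}, is ruled out precisely because $B_1$ has been absorbed into $R$ rather than left behind for $C'$ to border.

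\emph{Step 3: The inequality $\mathrm{simp}(C')\leq \mathrm{simp}(C)$.} Recall from Definition~\ref{Def:SimpNumber} that an interior-simplifying disc contributes $2$ and a boundary-simplifying disc contributes $1$. The discs of $C$ destroyed in the move comprise $D$ itself plus, for each Class~(ii) disc of $C'$, a pair of simplifying discs of $C$ on opposite sides of $B_1$. One checks that in every case the total weighted count that disappears is at least the weighted count that appears. The delicate subcase is the one motivating the interpolating handle: without including $B_2$ in $R$, an interior-simplifying disc could be replaced by a boundary-simplifying plus a new interior-simplifying pair, a net change of $+1$. By further absorbing the interpolating 0-handle and $B_2$, the isotopy destroys enough additional structure to restore the inequality. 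This is where the weighting (1 vs 2) was chosen precisely so that the inequality holds on the nose; the case analysis, though routine, is the main obstacle.

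\emph{Step 4: Equality.} If $\mathrm{simp}(C')=\mathrm{simp}(C)$, then every inequality used in Step 3 is an equality. This forces no Class~(ii) pair to be created beyond what is strictly necessary and no simplifying discs to be destroyed besides those accounted for. Geometrically, the only way this can happen is if, on the far side of $B_1$ from $D$, there is already an arc of $\calH^0\cap C$ that is normally parallel to $D\cap C$, so that the sweep stops against a pre-existing parallel copy. Moreover, every arc of $C\cap\calH^0$ between $D\cap C$ and this parallel arc must itself lie in the parallelity class of $D\cap C$, since any arc that were not parallel would either contribute a class~(i) disc that survives or force a strict loss in Step~3. This gives the claimed description of the equality case.
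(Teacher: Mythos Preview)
Your proof has the right intuition but the execution has genuine gaps that the paper's approach avoids.

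\textbf{The classification in Step 2 is leaky.} You split simplifying discs of $C'$ according to whether $\operatorname{int}(D'')$ meets $R$. But consider a disc $D''$ whose interior is disjoint from $R$ yet whose boundary arc $\alpha''=D''\cap C'$ contains the new arc $C'-C$ (which lies on $\partial R$). This disc is Class~(i) by your definition, but it is \emph{not} a simplifying disc of $C$, since $\alpha''\not\subset C$. You do gesture at ruling this out (``the alternative\ldots is ruled out because $B_1$ has been absorbed''), but you place that remark in the Class~(ii) paragraph, leaving the Class~(i) claim false as stated. The paper handles this directly: if $\alpha'\not\subset C$ and the relevant piece $R_0$ of $R$ has interior disjoint from $D'$, then $R_0\cup D'$ would have been a parallelity, semi-parallelity or interpolating handle and hence already absorbed into $R$, contradicting maximality.

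\textbf{Step 3 is a promissory note.} ``One checks that in every case the total weighted count that disappears is at least the weighted count that appears'' is exactly the content of the lemma. The paper organises this by defining $f(D')=\{\text{simplifying discs of }C\text{ contained in }D'\}$ and proving: (1) $f(D')\neq\emptyset$; (2) the $f(D')$ are pairwise disjoint; (3) bigons map to bigons (interior-simplifying discs contain only interior-simplifying discs). These three properties give $\mathrm{simp}(C')\leq\mathrm{simp}(C)$ immediately, with no residual case analysis.

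\textbf{Step 4 does not close.} In the equality case you need a specific conclusion: an arc of $\calH^0\cap C'$ normally parallel to $D\cap C$, with all intermediate arcs of $C\cap\calH^0$ also parallel. Your argument that ``the sweep stops against a pre-existing parallel copy'' is a picture, not a proof. The paper's route is cleaner: equality forces every simplifying disc of $C$ to lie in some $f(D')$, so in particular $D$ itself lies in some simplifying disc $D'$ of $C'$ with $|f(D')|=1$ and matching type. A fourth property of $f$ then says $C\cap\operatorname{int}(D')$ consists of arcs all normally parallel to $D'\cap C'$; since $D\subsetneq D'$ (the disc $D$ was destroyed), these arcs are non-empty, giving the required parallel arc and the intermediate structure.

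In short: replace your Class~(i)/(ii) split by the containment map $f$ and verify its four properties. That both repairs the leak and delivers the equality case for free.
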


\begin{proof}
We define a function
$$f \colon \{ \text{simplifying discs for } C' \} \rightarrow \mathbb{P} \{ \text{simplifying discs for } C \}$$
as follows. For a simplifying disc $D'$ for $C'$, we set $f(D')$ to be the set of simplifying discs for $C$ that lie in $D'$. We will establish the following properties:
\begin{enumerate}
\item $f(D') \not= \emptyset$ for every simplifying disc $D'$ for $C'$;
\item $f(D'_1) \cap f(D'_2) = \emptyset$ for distinct simplifying discs $D'_1$ and $D'_2$ for $C'$;
\item if $D'$ is an interior-simplifying disc for $C'$, then $f(D')$ consists of interior-simplifying discs;
\item if $f(D') = \{ D_1 \}$ for a single disc $D_1$ that is interior-simplifying exactly when $D'$ is interior-simplifying, then $C\cap \mathrm{int}(D')$ consists of a (possibly empty) collection of parallel arcs, all normally parallel to $D' \cap C'$.
\end{enumerate}
These properties quickly imply the lemma, as follows. Via (1), each simplifying disc for $C'$ gives rise to a non-empty collection of simplifying discs for $C$. As a result of (2), these sets are pairwise disjoint. Hence, the number of simplifying discs for $C$ is at least the number of simplifying discs for $C'$. Moreover, by (3), each interior-simplifying disc for $C'$ gives rise to at least one interior-simplifying disc for $C$. So we deduce that $\mathrm{simp}(C') \leq  \mathrm{simp}(C)$. In the case of equality, each interior-simplifying disc for $C'$ contains exactly one simplifying disc for $C$, which is interior-simplifying; and each boundary-simplifying disc for $C'$ contains exactly one simplifying disc for $C$, which is boundary-simplifying; and each simplifying disc for $C$ lies in a simplifying disc for $C'$. (It is here that we use the fact that interior-simplifying discs and boundary-simplifying discs for $C$ have different contributions to $\mathrm{simp}(C)$.) In particular, the simplifying disc $D$ must lie in a simplifying disc $D'$ for $C'$. Since $|f(D')| = 1$ and the hypotheses of (4) are satisfied, then (4) implies that $C\cap \mathrm{int}(D')$ consists of a (possibly empty) collection of parallel arcs, all normally parallel to $D' \cap C'$. In fact, these arcs are non-empty in this case, because otherwise $D' = D$, but $D$ is removed by the isotopy. Thus, we deduce that, in the case where $\mathrm{simp}(C') = \mathrm{simp}(C)$, there is an arc of $\calH^0 \cap C'$ that is normally parallel to $D \cap C$, and all the components of $C \cap \calH^0$ between these two arcs are normally parallel to $D \cap C$, as required.

We now prove (1) - (4). Properties (2) and (3) are trivial. For (2), note that distinct simplifying discs $D'_1$ and $D'_2$ for $C'$ are disjoint, and therefore the simplifying discs for $C$ that lie within them are disjoint. For (3), note that an interior-simplifying disc $D'$ is disjoint from $\partial S$ and hence so is every disc that it contains. 

For (4), observe that if the arcs $C\cap \mathrm{int}(D')$ are non-empty and not all parallel in $D'$, then at least two separate off sub-discs of $D'$ that are disjoint from $\partial D' \cap C'$. These discs are then simplifying discs, implying that $|f(D')| > 1$. So if $f(D') = \{ D_1 \}$, then the arcs $C \cap \mathrm{int}(D')$ are all parallel in $D'$. Moreover when $D_1$ and $D'$ are both interior-simplifying or both boundary-simplifying, then the arcs $C \cap \mathrm{int}(D')$ are all normally parallel to $D' \cap C'$, as required.

For a simplifying disc $D'$ for $C'$ that is also a simplifying disc for $C$, property (1) is immediate. Therefore,
what concerns us are simplifying discs for $C'$ that are not simplifying discs for $C$. Let $D'$ be one such disc. Let $\alpha'$ be the arc $\partial D' \cap C'$. We consider two possibilities: either $\alpha'$ is a subset of $C$ or it is not.

Suppose first that $\alpha'$ is a subset of $C$. Then the reason that $D'$ is not a simplifying disc for $C$ must have been because $C$ intersected the interior of $D'$ and the arcs of intersection between $C$ and the interior of $D'$ were removed in the isotopy. In that case, at least one of these arcs bounded a simplifying disc for $C$ that was removed in the isotopy. 
This establishes (1) in this case.

Now consider the case where $\alpha'$ is not a subset of $C$. It therefore contains the arc $C' - C$. Thus, the isotopy region $R$ between $C$ and $C'$ has non-empty intersection with $\alpha'$. Let $R_0$ be the component of $R \cap \calH^0$ incident to $\alpha'$. This either lies in $D'$ or has interior disjoint from $D'$. 

Suppose first that the interior of $R_0$ is disjoint from $D'$. (See the second case in Figure \ref{Fig:NewSimplifyingDisc}.) Then $R_0 \cup D'$ would have formed a parallelity, semi-parallelity or interpolating 0-handle of $S \cut C$. Moreover, in the case where $R_0 \cup D'$ is an interpolating handle, $R_0$ would have been incident to the parallelity bundle for $S \cut C$. Hence, when the interior of $R_0$ is disjoint from $D'$, $R_0 \cup D'$ would have been part of the isotopy region, contradicting the assumption that $D'$ is not part of the isotopy region.

Now suppose that $R_0$ lies in $D'$. (An example is shown in the first case in Figure \ref{Fig:NewSimplifyingDisc}.) In the case where $D'$ is an interior-simplifying disc, $R_0$ cuts $D'$ into at least two discs, and hence $D'$ contains at least two interior-simplifying discs for $C$, both of which are removed by the isotopy. In the case where $D'$ is a boundary-simplifying disc, $D' \cut R_0$ has at least one component disjoint from $\partial S$, and therefore $D'$ contains at least one interior-simplifying disc for $C$. Thus, in both cases, $f(D')$ is non-empty.
\end{proof}




Let $D$ be a simplifying disc for $C$. Let $C'$ be the result of performing the isotopy across the isotopy region associated with $D$.
If ${\mathrm{simp}}(C') =  {\mathrm{simp}}(C)$, then one might hope to iterate this procedure, to remove the arc of $\calH^0 \cap C'$ that runs parallel to $D \cap C$, and so on. In fact, this is a good strategy, but the situation is complicated by the following possibility. The arc of $C' \cap \calH^0$ incident to the isotopy region might also be parallel to $D \cap C$. So, when we perform the isotopy across the isotopy region associated with $D$, we might not decrease the number of parallel copies of $D \cap C$. (See Figure \ref{Fig:SelfReturning}.) We formalise this as follows.

\begin{definition}
\label{Def:SelfReturning}
Let $\calH$ be a handle structure for a compact surface $S$, and let $C$ be a standard properly embedded 1-manifold.
Let $D$ be a simplifying disc for $C$ and let $R$ be the associated isotopy region. Let $C'$ be the 1-manifold obtained by the isotopy across $R$. Then $D$ and $R$ are \emph{self-returning} if the arc of $C' \cap \calH^0$ incident to $R$ is normally parallel to $D \cap C$ and all the components of $C \cap \calH^0$ between these two arcs are normally parallel to $D \cap C$.
\end{definition}

\begin{figure}[h]
\centering
\includegraphics[width=0.3\textwidth]{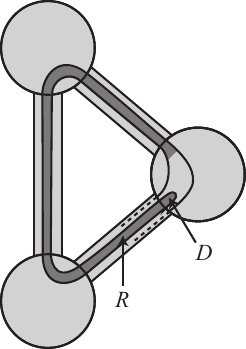}
\caption{The simplifying disc $D$ and its isotopy region $R$ are self-returning}
\label{Fig:SelfReturning}
\end{figure}

We will initially focus on the case where $D$ is not self-returning.

We know that if ${\mathrm{simp}}(C') =  {\mathrm{simp}}(C)$, then there is an arc of $\calH^0 \cap C'$ that runs parallel to $D \cap C$. If $D$ is not self-returning, then this is actually an arc of $\calH^0 \cap C$. It is natural to attempt to perform the associated isotopy at the same time as the one across $D$. We are therefore led to the following definition.

\begin{definition}
\label{Def:MaximalEnlargement}
Let $\calH$ be a handle structure for a compact surface $S$. Let $C$ be a standard 1-manifold properly embedded in $S$. Let $D$ be a simplifying disc for $C$. Let $R$ be the associated isotopy region. Then an \emph{enlargement} of $R$ consists of a disc $D_+$ containing $R$ and where $C \cap D_+$ consists of arcs all normally parallel to $R \cap C$, one of which is lies in $\partial D_+$. An enlargment is \emph{maximal} if it does lie in a bigger enlargement, up to isotopy preserving $C$ and the handles of $\calH$.
The \emph{isotopy across $D_+$} replaces the arcs $D_+ \cap C$ with arcs  parallel to $\partial R \cut (C \cup \partial S)$.
\end{definition}

\begin{figure}[h]
\centering
\includegraphics[width=0.6\textwidth]{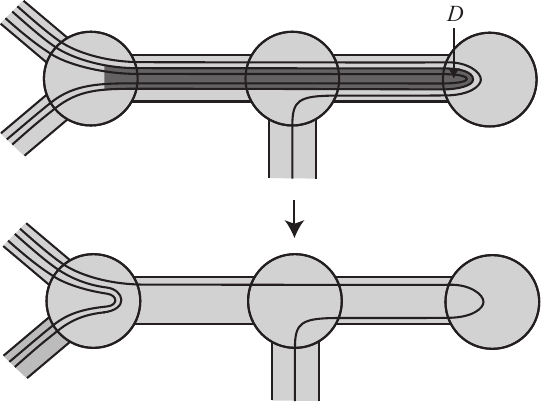}
\caption{The maximal enlargement of the isotopy region is shaded. The associated isotopy is shown.}
\label{Fig:MaximalEnlargement}
\end{figure}

Clearly, the isotopy across $D_+$ can be expressed as a composition of isotopies across isotopy regions. Hence, the following result is immediate.

\begin{lemma}
\label{Lem:MaxEnlargementIsotopySimp}
Let $\calH$ be a handle structure for a compact surface $S$, and let $C$ be a properly embedded standard 1-manifold. Let $C'$ be the 1-manifold that is obtained from $C$ by an isotopy across a maximal enlargement of an isotopy region. Then $\mathrm{simp}(C') \leq \mathrm{simp}(C)$.
\end{lemma}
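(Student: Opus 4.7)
The plan is to reduce the statement directly to the first clause of Lemma \ref{Lem:SimpDoesNotGoUp} by writing the isotopy across the maximal enlargement $D_+$ as a finite composition of isotopies across isotopy regions, one for each arc of $C \cap D_+$. Enumerate the arcs of $C \cap D_+$ as $\alpha_0, \alpha_1, \ldots, \alpha_k$ in the order they appear across $D_+$, with $\alpha_0 = D \cap C$ being the innermost arc (the one bounded off by the original simplifying disc $D$) and $\alpha_k$ the arc lying in $\partial D_+$. By Definition \ref{Def:MaximalEnlargement}, each $\alpha_i$ is normally parallel to $\alpha_0$, and the parallelism witnesses decompose $D_+$ into $k$ consecutive slabs, each of which is the isotopy region of a simplifying disc appearing at the appropriate stage.

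Next, I would run the isotopies one slab at a time. Begin with $C_0 = C$. Assume inductively that $C_i$ is a standard 1-manifold whose only modification in $D_+$ is that $\alpha_0, \ldots, \alpha_{i-1}$ have been pushed across their respective slabs, and that $C_i$ has a simplifying disc $D_i$ whose isotopy region $R_i$ is the next slab of $D_+$. Here $D_0 = D$ and $R_0 = R$ by hypothesis. Applying Lemma \ref{Lem:SimpDoesNotGoUp} to $D_i \subset C_i$ gives $\mathrm{simp}(C_{i+1}) \leq \mathrm{simp}(C_i)$, where $C_{i+1}$ is the result of the isotopy across $R_i$. After $k+1$ such steps we have $C_{k+1} = C'$, and chaining the inequalities yields $\mathrm{simp}(C') \leq \mathrm{simp}(C)$.

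The only thing to verify is the inductive claim that at each stage a genuine isotopy region is available inside $D_+$. This follows from the fact that the arcs of $C \cap D_+$ are all normally parallel to $\alpha_0$ in the sense of the definition of a maximal enlargement: in the handle structure on $S \cut C_i$, the slab between the pushed-forward image of $\alpha_{i-1}$ and $\alpha_i$ is built out of a (semi-)parallelity bundle component attached to the simplifying disc cut off by $\alpha_i$, together possibly with an interpolating $0$-handle exactly in the boundary case handled in Definition \ref{Def:IsotopyRegion}. Hence the slab is literally an isotopy region for the simplifying disc $D_i$ at step $i$, and Lemma \ref{Lem:SimpDoesNotGoUp} applies at each step. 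This is the step I expect to require the most care in a full write-up, but given the setup of maximal enlargements it is essentially a matter of unpacking Definitions \ref{Def:IsotopyRegionClosed}, \ref{Def:IsotopyRegion} and \ref{Def:MaximalEnlargement}.
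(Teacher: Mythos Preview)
Your proposal is correct and follows exactly the approach the paper takes: the paper simply notes (in the sentence immediately preceding the lemma) that the isotopy across $D_+$ is a composition of isotopies across isotopy regions, and then the result is immediate from Lemma \ref{Lem:SimpDoesNotGoUp}. Your write-up supplies more detail on the inductive verification that each intermediate slab is a genuine isotopy region, which the paper leaves implicit, but the underlying argument is the same.
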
 

As is evident from Figure \ref{Fig:MaximalEnlargement}, the isotopy across $D_+$ might not reduce the simplification number. However, the following lemma asserts that one need only perform such isotopies a controlled number of times and then the simplification number must strictly decrease.

\begin{lemma}
\label{Lem:NotSelfReturning}
Let $\calH$ be a handle structure for a compact surface $S$. Let $C$ be a standard 1-manifold properly embedded in $S$. Let $D$ be a simplifying disc for $C$. Suppose that $D$ is not self-returning. Then after performing at most $18||\calH|| + 4\mathrm{simp}(C)$ isotopies across maximal enlargements of isotopy regions containing $D$, the resulting 1-manifold $C''$ satisfies $\mathrm{simp}(C'') < \mathrm{simp}(C)$.
\end{lemma}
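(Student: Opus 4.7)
I plan to iterate the procedure of Lemma \ref{Lem:MaxEnlargementIsotopySimp}. Setting $C^{(0)} = C$ and $D^{(0)} = D$, at step $k$ I take the maximal enlargement $D^{(k)}_+$ of the isotopy region $R^{(k)}$ associated with a simplifying disc $D^{(k)}$ for $C^{(k)}$, and let $C^{(k+1)}$ be the result of the isotopy across $D^{(k)}_+$. By Lemma \ref{Lem:MaxEnlargementIsotopySimp}, $\mathrm{simp}(C^{(k+1)}) \leq \mathrm{simp}(C^{(k)}) \leq \mathrm{simp}(C)$. I terminate the sequence at the first $k$ for which the inequality $\mathrm{simp}(C^{(k+1)}) < \mathrm{simp}(C^{(k)})$ is strict, and take $C'' = C^{(k+1)}$.

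When $\mathrm{simp}(C^{(k+1)}) = \mathrm{simp}(C^{(k)})$, I find $D^{(k+1)}$ by decomposing the isotopy across $D^{(k)}_+$ into a composition of isotopies across individual isotopy regions and applying Lemma \ref{Lem:SimpDoesNotGoUp} to the final sub-isotopy. The equality case of that lemma yields an arc of $\calH^0 \cap C^{(k+1)}$ that is normally parallel to the image of $D^{(k)} \cap C^{(k)}$ after the preceding sub-isotopies, with all intervening arcs also parallel to it. This new arc bounds the successor simplifying disc $D^{(k+1)}$, positioned just past the vertical boundary of $R^{(k)}$ on the side opposite to $D^{(k)}$; consequently the sequence $D^{(0)}, D^{(1)}, \ldots$ traces a parallel chain of arcs through the handles of $\calH$.

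To bound the length of this sequence, I track the parallelism class $[\alpha^{(k)}]$ of the arc $D^{(k)} \cap C^{(k)}$ in its ambient 0-handle, together with the class of the adjacent 1-handle arc traversed en route to $D^{(k+1)}$. Because each $D^{(k)}_+$ is a maximal enlargement, all arcs of $C^{(k)}$ normally parallel to $D^{(k)} \cap C^{(k)}$ on one side of $R^{(k)}$ are consumed in a single step, so the chain advances in a definite direction. Propagating the non-self-returning hypothesis inductively, I argue that no parallelism class can be revisited. Since Lemma \ref{Lem:NumberOfPairingsCurve} bounds the total number of parallelism classes in the 0-handles and 1-handles by $9||\calH|| + 2\mathrm{simp}(C)$, and each iteration consumes at most two such classes (one in a 0-handle and one in an adjacent 1-handle), the number of iterations is at most $2(9||\calH|| + 2\mathrm{simp}(C)) = 18||\calH|| + 4\mathrm{simp}(C)$.

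The main obstacle I anticipate is rigorously propagating the non-self-returning hypothesis through the sequence. The hypothesis is \emph{a priori} local to the initial step, and I need to verify that each successor $D^{(k+1)}$ is itself not self-returning, so that the parallel chain cannot cycle back to a previously encountered class. This will rely on a careful local analysis at the interface between $R^{(k)}$ and its complement: after the isotopy across $D^{(k)}_+$, the arc of $C^{(k+1)} \cap \calH^0$ adjacent to the newly vacated side of $R^{(k)}$ must be shown to lie in a genuinely new parallelism class, and this structural property must be shown to persist at the next step, so that the chain of parallelism classes encountered by $D^{(0)}, D^{(1)}, \ldots$ is injective.
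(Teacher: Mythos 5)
Your skeleton (iterate isotopies across maximal enlargements, use the equality case of Lemma \ref{Lem:SimpDoesNotGoUp} to locate the next simplifying disc, and appeal to Lemma \ref{Lem:NumberOfPairingsCurve} for the count) matches the paper's, but the counting argument --- which is the entire content of the lemma --- does not work as you have set it up, and you have left its key step unproven. The equality case of Lemma \ref{Lem:SimpDoesNotGoUp} produces a successor arc that is \emph{normally parallel} to $D^{(k)} \cap C^{(k)}$; by induction every arc $D^{(k)} \cap C^{(k)}$ in your chain lies in the parallelism class of $D \cap C$. So the sequence of classes $[\alpha^{(k)}]$ you propose to track is constant, not injective, and ``no parallelism class can be revisited'' fails at the very first step. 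Separately, the arithmetic is inverted: knowing that each iteration consumes \emph{at most} two classes gives no upper bound on the number of iterations; you would need each iteration to consume at least one fresh item from a set of bounded size.

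The paper's proof sidesteps both of your difficulties by doing all the bookkeeping in the original configuration, before any isotopy is performed. It builds a nested sequence of discs $D_1 \subset D_2 \subset \cdots \subset D_r$ in $S$, where $D_1$ is the maximal enlargement of the isotopy region of $D$ and $D_{i+1}$ is obtained by adjoining the next arc of the original $C \cap \calH^0$ parallel to $D \cap C$ (extended until it diverges from $\partial D_i$) together with all arcs normally parallel to it. The quantity that is counted is not the class of $D^{(k)} \cap C^{(k)}$ but the arc of $C \cap \calH^0$ sitting on $\partial D_i$ at the divergence point, which is \emph{outermost in its parallelism class}: maximality of each $D_i$ forces a genuine divergence, these outermost arcs are distinct across steps because the discs are nested, and there are at most $2(9||\calH|| + 2\mathrm{simp}(C)) = 18||\calH|| + 4\mathrm{simp}(C)$ of them by Lemma \ref{Lem:NumberOfPairingsCurve} (two outermost arcs per parallelism class). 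Finally, the non-self-returning hypothesis is used only once, at termination, to guarantee that no new arc parallel to $D \cap C$ has been introduced once all the original parallel copies have been consumed, so that the equality case of Lemma \ref{Lem:SimpDoesNotGoUp} cannot hold and the simplification number must drop. There is consequently no need to propagate non-self-returning-ness to the successor discs, which is exactly the ``obstacle'' you correctly identify but do not resolve.
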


\begin{proof}
Let $R$ be the isotopy region for $D$ and let $D_1$ be the maximal enlargement of $R$. Suppose first that every arc of $C \cap \calH^0$ parallel to $D \cap C$ is contained in $D_1$. Then, when we perform the isotopy across $D_1$, we remove all the normal arcs parallel to $D \cap C$. We are assuming that $D$ is not self-returning, and so this isotopy does not introduce any normal arcs parallel to $D \cap C$. Hence, this isotopy has reduced the simplification number, by Lemma \ref{Lem:SimpDoesNotGoUp}.

So, we may assume that there is a normal arc of $C \cap \calH^0$ parallel to $D \cap C$ that is not contained in $D_1$. Consider the outermost one in $\calH^0$, in other words, the one closest to $D$. Then we can extend this arc maximally, so that the resulting arc $\beta$ runs parallel to the boundary of $D_1$. But at some point this arc must diverge from $D_1$. At this point, $\partial D_1$ contains an arc of $C \cap \calH^0$ that is outermost in its parallelism class.
We enlarge the disc $D_1$ to form a disc $D_2$ that includes $\beta$ and all arcs normally parallel to it. Now consider the next arc of $C \cap \calH^0$ that is parallel to $D \cap C$ but not contained in $D_2$. It runs parallel to $\partial D_2$ until it diverges from it. Enlarge $D_2$ to form a disc $D_3$ that includes this arc and all arcs that are normally parallel to it. Continue in this way. The number of times that this can be iterated is at most $18||\calH|| + 4\mathrm{simp}(C)$. This because each step gives rise to an arc of $C \cap \calH^0$ that is outermost in its parallelism class. The number of such arcs is at most 
$18||\calH|| + 4\mathrm{simp}(C)$, by Lemma \ref{Lem:NumberOfPairingsCurve}. Hence after at most this number of isotopies, we end with a 1-manifold having smaller simplification number than $C$.
\end{proof}

The following algorithm establishes that we can implement the isotopy across the maximal enlargement of an isotopy region.

\begin{lemma}[{\sc Isotopy across maximal enlargement}]
\label{Lem:IsotopyAcrossMaximalEnlargement}
There is an algorithm that takes as its input the following:
\begin{enumerate}
\item a handle structure $\calH$ for a compact surface $S$;
\item a standard 1-manifold $C$ properly embedded in $S$;
\item an arc of $C \cap \calH^0$ transversely oriented into a simplifying disc $D$;
\end{enumerate}
and outputs the standard 1-manifold $C'$ that is obtained by performing the isotopy across the maximal enlargement of the isotopy region associated with $D$,
and possibly removing some boundary-parallel arc components. It also outputs the number of arcs of intersection between $C$ and this maximal enlargement.
The running time is bounded above by a polynomial function of $||\calH||$, $\mathrm{simp}(C)$, and $\log (w(C) + |\partial C|)$.
\end{lemma}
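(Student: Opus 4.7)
The plan is to assemble the algorithm from three subroutines: locating the isotopy region $R$ associated with $D$, determining the number $N$ of arcs of $C \cap D_+$, and updating the pairing-system description of $C$ so as to realise the isotopy across the maximal enlargement $D_+$.

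First, I will use the given arc of $C \cap \calH^0$ together with its transverse orientation to pick out the 0-handle $H_0$ in which $D$ lives and to identify $D$ directly from the pairing system on $H_0$. I then find the component of the parallelity or semi-parallelity bundle of $S \cut C$ that is incident to $D$ across $\calH^1$: for the parallelity bundle this is an application of {\sc Location of parallelity bundle component} (Theorem~\ref{Thm:LocationParBundle}) to a point on a 1-cell of the cell structure lying just off $C$, while for the semi-parallelity bundle it is a direct search through the $O(||\calH||)$ semi-parallelity handles, as observed after Definition~\ref{Def:IsotopyRegion}. A further invocation of the same machinery checks whether this component bridges an interpolating 0-handle to another semi-parallelity component, as required by Definition~\ref{Def:IsotopyRegion}. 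This pins down $R$ explicitly as the sequence of handles of $\calH$ that it traverses, together with the vectors of the arc types making up $R \cap C$ and of the arc types along $\partial R \cut (C \cup \partial S)$ that the isotopy will create.

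Next, I observe that every arc of $C \cap D_+$ is normally parallel to $R \cap C$, so all such arcs traverse exactly the same sequence of handles of $\calH$ and represent the same parallelism class in each one. The common multiplicity of this class is $N$, which can therefore be read off as a single coordinate of the vector $(C)$ from Section~\ref{Sec:Vector}. To implement the isotopy, I update the vector handle by handle along $R$: in each handle of $\calH$ met by $R$, I decrement by $N$ the coordinate of the old arc type of $R \cap C$ and increment by $N$ the coordinate of the arc type parallel to $\partial R \cut (C \cup \partial S)$. These changes preserve the intersection counts on each 1-cell, so the resulting data is a valid pairing system. Finally, an application of {\sc Components of standard 1-manifold} (Theorem~\ref{Thm:ComponentsCurves}) identifies and discards any components that have become boundary-parallel arcs or that have collapsed to closed curves inside a single 0-handle.

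The main obstacle will be the case analysis for $R$: the precise structure of the isotopy region depends on whether $D$ is interior- or boundary-simplifying, on whether the adjacent bundle component is a parallelity or semi-parallelity component, and on whether an interpolating 0-handle must be included to form the second semi-parallelity piece. Once these cases are dispatched and $R$ is determined, reading off $N$ costs $O(\log(w(C) + |\partial C|))$ and the vector update touches at most $O(||\calH|| + \mathrm{simp}(C))$ handles by Lemma~\ref{Lem:NumberOfPairingsCurve}, so the overall running time is bounded by a polynomial in $||\calH||$, $\mathrm{simp}(C)$, and $\log(w(C) + |\partial C|)$, as claimed.
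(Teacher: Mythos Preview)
Your proposal has a genuine gap in the second step, where you claim that the number $N$ of arcs in $C \cap D_+$ ``can therefore be read off as a single coordinate of the vector $(C)$''. This is not correct. The arc $\alpha = R \cap C$ may wind through the same 0-handle, and through the same arc type within that 0-handle, many times (indeed, a number of times comparable to $w(C)$). Moreover, there may be arcs of $C \cap \calH^0$ of that same arc type that are \emph{not} part of any arc normally parallel to $\alpha$; they are locally parallel but diverge elsewhere. Thus the vector coordinate records a quantity of the form $kN + (\text{extra})$, where $k$ is the number of passes of $\alpha$ through that arc type and the extra term counts arcs not in $D_+$. You cannot disentangle $N$ from this single number. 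The same difficulty contaminates your vector update: decrementing by $N$ at each handle visited by $R$ either undercounts (if $\alpha$ visits that handle several times) or removes arcs that are not in $D_+$ at all.

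The paper handles exactly this obstruction with a different idea. It fixes a 1-cell $\beta$ of the cell structure containing an endpoint $x$ of $\alpha$, and uses a \emph{binary search} driven by repeated calls to {\sc AHT}: at each stage it cuts $C$ along a sub-arc $\beta'$ of $\beta$, runs {\sc AHT} to test whether $\alpha$ was thereby disconnected, and halves or enlarges $\beta'$ accordingly. This locates the nearest return of $\alpha$ to $\beta$ in $O(\log w(C))$ iterations, after which a final cut along the correct sub-arc $\beta''$ separates the genuinely parallel copies of $\alpha$ from everything else, and their count is $N$. The isotopy is then realised by deleting these parallel copies and inserting the same number of short arcs in the 0-handle incident to $\beta$. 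Your plan for locating $R$ via Theorem~\ref{Thm:LocationParBundle} and the semi-parallelity analysis is sound; what is missing is this binary-search mechanism (or some equivalent) for extracting $N$.
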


\begin{proof}
Let $R$ be the isotopy region associated with $D$ and let $\alpha$ be $\partial R \cap C$. By the algorithm {\sc Location of parallelity bundle component}
given in Theorem \ref{Thm:LocationParBundle}, we can determine the location on $C$ of the point or points $\partial \alpha - \partial S$. If $\partial \alpha - \partial S = \emptyset$, then $\alpha$ is a boundary-parallel arc component of $C$, in which case we simply remove $\alpha$ and all parallel components. So suppose that
$\partial \alpha - \partial S \not= \emptyset$.
We need to be able to determine the collection of arcs in $C$ that run parallel to $\alpha$. The point or points of $\partial \alpha - \partial S$ lie (after a small isotopy) in a component $\beta$ of $\calH^0 \cap \calH^1$. We plan to cut $\alpha$ along its intersection with a suitable sub-arc of $\beta$, so that the resulting components contain all the sub-arcs of $C$ parallel to $\alpha$. We might not be able to cut $C$ along the entirety of $C \cap \beta$ because in doing so, we might cut up $\alpha$ or one of its parallel arcs. We therefore need to determine the points of $\alpha \cap \beta$ that are closest to $\partial \alpha$. More precisely, we pick a point $x$ of $\partial \alpha - \partial S$ and we consider the component of $\beta'$ of $\beta \cut x$ that does not initially go into the isotopy region. We wish to determine the point of $(\beta' - x) \cap \alpha$ that is closest to $x$ or to determine that there is no such point. This can be achieved using AHT, as follows.

We start by cutting $C$ along all the points of $C \cap \beta'$. We need to determine whether this decomposes $\alpha$. We can do this by computing the components of $\alpha \cut \beta'$, identifying the components that contain $\partial \alpha$ and seeing whether there is just one such component (in which case $\alpha$ was not decomposed) or two (in which case $\alpha$ was decomposed). 
If we did not decompose $\alpha$, then we know that there are no other points of $\beta' \cap \alpha$ other than $x$. On other hand, suppose that we did decompose $\alpha$. In this case, we shrink $\beta'$ to a sub-arc containing $x$ that has half as many points of intersection with $C$ (rounded to a nearest integer). We cut $C$ along $\beta'$ and again determine using AHT whether this decomposes $\alpha$. If it does, then we shrink $\beta'$ again to half its size. On the other hand, if the decomposition along $\beta'$ did not decompose $\alpha$, then we enlarge $\beta'$ to $3/2$ of its former size. Repeating in this way, we can eventually home in on the point of $(\beta' - x) \cap \alpha$ closest to $x$.

In the case where there are no other points of $\beta \cap \alpha$ other than $\partial \alpha$, we simply cut $C$ along $C \cap \beta$, use AHT to determine its components and then count how many are parallel to $\alpha$. In the case where are points of $\beta \cap \alpha$ other than $\partial \alpha$, we let $d$ be the smallest number of points of $C \cap \beta$ strictly between $\partial \alpha$ and another point of intersection with $\beta \cap \alpha$. Let $\beta''$ be the sub-arc of $\beta$ containing $\partial \alpha$ and that extends $\lfloor d/2 \rfloor$ points of $C \cap \beta$ in each direction from $\partial \alpha$. We cut $C$ along $C \cap \beta''$ and then use AHT to determine the number of components parallel to $\alpha$.

In this way, we determine the number of parallel copies of $\alpha$ in the maximal enlargement of the isotopy region. We remove these from $C$ and replace them by the same number of short arcs in the 0-handle incident to $\beta$. The result is $C'$.
\end{proof}

We now consider the case where $D$ is self-returning. We would like to argue that the associated isotopy region $R$ can be combined with $D$ to form an embedded annulus or M\"obius band. However, there is a situation where this might not possible: when the horizontal boundary of $R$ contains an arc that is parallel to $D \cap C$. (See Figure \ref{Fig:SelfReturningTwice}.) We deal with this possibility in the following lemma.

\begin{figure}[h]
\centering
\includegraphics[width=0.6\textwidth]{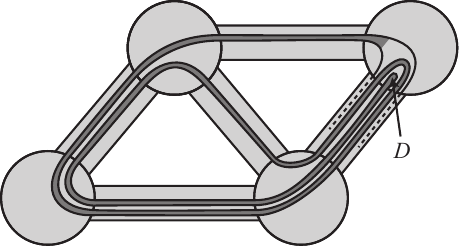}
\caption{The isotopy region $R$ for $D$ is shown shaded. It is self-returning. Its horizontal boundary contains arcs of intersection with $\calH^0$ that are parallel to $D \cap C$.}
\label{Fig:SelfReturningTwice}
\end{figure}

\begin{lemma}
\label{Lem:SelfReturningHoriz}
Let $\calH$ be a handle structure for a compact surface $S$. Let $C$ be a standard 1-manifold properly embedded in $S$. Let $D$ be a simplifying disc for $C$, and let $R$ be the associated isotopy region. Suppose that the horizontal boundary of $R$ contains a standard arc in $\calH^0$ that is parallel to $D \cap C$. Then after performing at most $36||\calH|| + 8 \mathrm{simp}(C)$ isotopies across maximal enlargements of isotopy regions containing $D$, the resulting 1-manifold $C''$ satisfies ${\mathrm{simp}}(C'') < {\mathrm{simp}}(C)$.
\end{lemma}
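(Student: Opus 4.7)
My plan is to reduce the present situation to two applications of the iteration from \reflem{NotSelfReturning}, matching the factor-of-two discrepancy between the bound $36||\calH|| + 8\mathrm{simp}(C)$ here and the bound $18||\calH|| + 4\mathrm{simp}(C)$ there. The structural point is that the hypothesis places us in a doubled parallelism configuration: the arc $D \cap C$ and the standard arc $\alpha$ in $\partial R \cap \calH^0$ (lying in the horizontal boundary of $R$ and parallel to $D \cap C$) both represent the same parallelism class, but approached from two different sides of $R$.

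First I would observe that $\alpha$ itself bounds a simplifying disc $D^\dagger$ for $C$, with its own isotopy region $R^\dagger$ adjacent to $R$. I would then run the iteration from the proof of \reflem{NotSelfReturning} on the $(D, D^\dagger, R, R^\dagger)$ configuration, enlarging successive discs $D_1 \subset D_2 \subset \cdots$ that each contain $D$. Each enlargement step follows an outermost arc of $\calH^0 \cap C$ parallel to $D \cap C$ and absorbs it into the enlargement, exactly as in \reflem{NotSelfReturning}. The total number of outermost arcs in parallelism classes is at most $18||\calH|| + 4\mathrm{simp}(C)$ by \reflem{NumberOfPairingsCurve}, so this phase consists of at most that many maximal-enlargement isotopies. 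By \reflem{MaxEnlargementIsotopySimp}, $\mathrm{simp}$ does not increase along the way; if at some stage it strictly decreases, we are done.

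Otherwise, after at most $18||\calH|| + 4\mathrm{simp}(C)$ isotopies we arrive at a standard 1-manifold $C_1$ with $\mathrm{simp}(C_1) = \mathrm{simp}(C)$ in which $D$ still survives as a simplifying disc (by \reflem{SimpDoesNotGoUp}, an arc normally parallel to $D \cap C$ persists), but in which the horizontal-parallel structure that forced self-return has been dismantled: the arc $\alpha$ and its parallel copies adjacent to $R^\dagger$ have all been absorbed and replaced, so the isotopy region of $D$ in $C_1$ no longer has a horizontal parallel arc. A short case analysis then shows that $D$ is no longer self-returning in $C_1$. I apply \reflem{NotSelfReturning} directly to $D$ in $C_1$, which contributes at most another $18||\calH|| + 4\mathrm{simp}(C_1) \leq 18||\calH|| + 4\mathrm{simp}(C)$ isotopies and strictly reduces $\mathrm{simp}$. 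Adding the two phases yields the claimed bound of $36||\calH|| + 8\mathrm{simp}(C)$.

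The main obstacle I expect is the bookkeeping in the first phase: one must verify that each enlargement $D_i$ in the \reflem{NotSelfReturning} iteration applied to the $(D, D^\dagger)$ configuration is genuinely an isotopy region containing $D$ (so that the isotopies are of the type permitted by the lemma statement), and simultaneously track how the interlocked parallelism classes of $D \cap C$ and $\alpha$ unwind. A careful check is needed to rule out that self-return recurs at some intermediate $D_i$ in a way not covered by the termination argument of \reflem{NotSelfReturning}; this is plausible because each enlargement strictly absorbs a new outermost arc, but the self-returning hypothesis means we must be cautious that "outermost" is interpreted consistently on both sides of the doubled structure. Once this verification is done, the rest is a direct appeal to the earlier lemmas.
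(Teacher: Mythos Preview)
Your two-phase outline --- at most $18||\calH|| + 4\,\mathrm{simp}(C)$ isotopies, then another $18||\calH|| + 4\,\mathrm{simp}(C)$ via \reflem{NotSelfReturning} --- is the right skeleton and matches the paper. But the details connecting the two phases are off in ways that matter.

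First, the object $D^\dagger$ does not exist as you describe it. The arc $\alpha$ in $\partial_h R$ parallel to $D\cap C$ lies in the same $0$-handle as $D$; the disc in $H_0$ it cuts off on the $D$-side has interior meeting $C$ (at least in $D\cap C$), so it is not a simplifying disc for $C$, while the disc on the other side is the bulk of $H_0$ and generically fails the simplifying-disc conditions. The ``$(D, D^\dagger, R, R^\dagger)$ configuration'' is therefore not well-posed.

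Second, and more seriously, $D$ does not survive the first phase. Every isotopy in that phase is across an enlargement \emph{containing} $D$, so $D\cap C$ is moved. What the paper does instead is: run the first phase exactly as in the proof of \reflem{NotSelfReturning} to eliminate all arcs of $C\cap\calH^0$ normally parallel to $D\cap C$; then, because $R$ was self-returning, a \emph{new} arc parallel to $D\cap C$ is created, bounding a simplifying disc $D'$ that contains $D$. The crux is then to prove that $D'$ is not self-returning. This is where the hypothesis is actually used: if the isotopy region $R'$ of $D'$ were self-returning, there would be a handle-by-handle bijection between $R$ and $R'$ (each handle of $R$ sitting inside the corresponding handle of $R'$); but the hypothesis forces $R$ to enter the strip between $D\cap C$ and $D'\cap C'$, so the first handle of $R'$ contains more than one handle of $R$, a contradiction. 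Your ``short case analysis'' placeholder is exactly this argument, and it is the heart of the lemma --- without it, nothing prevents self-return from recurring after the first phase.
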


\begin{proof}
Let $\alpha$ be the sub-arc of $C$ that is the union of $D \cap C$ and the horizontal boundary of $R$.
By Lemma \ref{Lem:NotSelfReturning}, we may assume that $R$ is self-returning. As in the proof of Lemma \ref{Lem:NotSelfReturning}, we define a nested collection of discs embedded in $S$. The first is the maximal enlargement $D_1$ of $R$. If there is an arc of $C \cap \calH^0$ normally parallel to $D \cap C$ that is not contained in $D_1$, then we consider one that is outermost in $\calH^0$ and extend it as far as possible running parallel to $\partial D_1$, to form an arc $\beta$. Note that $\beta$ cannot run parallel to the entire length of $\alpha$, since in that case it would been part of $D_1$. We define the next disc $D_2$ to contain $D_1$ and to include $\beta$ and all normally parallel copies. We continue in this way until there are no more arcs of $C \cap \calH^0$ normally parallel to $D \cap C$. This forms a collection of discs $D_1 \subset D_2 \subset \dots \subset D_r$ where $r \leq 18||\calH|| + 4 \mathrm{simp}(C)$. We then perform all these isotopies, giving a 1-manifold $C'$. Since $R$ is self-returning, a component of $C' \cap \calH^0$ is normally parallel  to $C \cap D$. An outermost such arc in $\calH^0$ bounds a simplifying disc $D'$. This contains $D$. We claim that $D'$ is not self-returning. Assuming this claim, Lemma \ref{Lem:NotSelfReturning} then implies that after $18||\calH|| + 4 \mathrm{simp}(C)$ further isotopies across maximal enlargements of isotopy regions, we obtain a 1-manifold $C''$ satisfying $\mathrm{simp}(C'') < \mathrm{simp}(C') \leq \mathrm{simp}(C)$.

Suppose that the claim is not true. Consider the isotopy region $R'$ associated with $D'$. This consists of $D'$, followed by a component of the parallelity bundle or semi-parallelilty bundle of $S \cut C'$, possibly followed by an interpolation 0-handle, possibly followed by a component of the semi-parallelity bundle. The first 1-handle of $R'$ contains the first 1-handle of $R$. The next 0-handle of $R'$ contains the next 0-handle of $R$, and so on. Since we are assuming that $R'$ is self-returning, we deduce that there is a one-one correspondence between the handles of $R$ and the handles of $R'$, where the correspondence is that the former handle lies in the latter. In particular, each handle of $R'$ contains exactly one handle of $R$. But we are assuming that the horizontal boundary of $R$ contains a standard arc that is parallel to $D \cap C$. Hence, $R$ enters the space between $D \cap C$ and $D' \cap C'$. This implies that the first handle of $R'$ contains more than one handle of $R$. This is a contradiction.
\end{proof}

We are now left with the situation where $D$ is self-returning, but where the horizontal boundary of the isotopy region $R$ does not run parallel to $D \cap C$ at any point. The following lemma partially deals with this case.

\begin{lemma}
\label{Lem:SelfReturningNotReeb}
Let $\calH$ be a handle structure for a compact surface $S$. Let $C$ be a standard 1-manifold properly embedded in $S$. Let $D$ be a simplifying disc for $C$, and let $R$ be the associated isotopy region. Suppose $R$ is self-returning and that no arc of $\partial_hR$ is normally parallel to $D \cap C$. Let $D_+$ be the maximal enlargement of $R$. Suppose that $|D_+ \cap C|$ is strictly less than the number of arcs of $C \cap \calH_0$ normally parallel to $D \cap C$, including $D \cap C$. Let $C'$ be the 1-manifold obtained from $C$ by the isotopy across this maximal enlargement. Then $C' \cap \calH^0$ has a component that is parallel to $D \cap C$ and that bounds a simplifying disc for $C'$ which is not self-returning.
\end{lemma}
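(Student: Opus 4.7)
The plan is to first identify a component of $C' \cap \calH^0$ parallel to $D \cap C$ that bounds a simplifying disc for $C'$, and then to show that this simplifying disc is not self-returning by invoking the maximality of $D_+$.

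For the first step, I would let $H_0$ be the 0-handle containing $D$ and enumerate the arcs of $C \cap H_0$ normally parallel to $D \cap C$ as $\gamma_1 = D \cap C, \gamma_2, \dots, \gamma_k$ in the order of nesting outward from the subarc $\beta \subset \partial H_0$ that bounds $D$. Let $i$ be the smallest index for which $\gamma_i$ is not contained in $D_+$; this exists by the hypothesis $|D_+ \cap C| < k$. By minimality of $i$, the arcs $\gamma_1, \dots, \gamma_{i-1}$ all lie in $D_+ \cap C$, so they are removed by the isotopy across $D_+$. The replacement arcs, being parallel to $\partial R \cut (C \cup \partial S)$, stay near the $\calH^0 \cap \calH^1$ boundary of $H_0$ and in particular do not enter the interior of the subdisc of $H_0$ bounded by $\gamma_i$ and the $\beta$-side of $\partial H_0$. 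Hence that subdisc has interior disjoint from $C'$, so $\gamma_i$ bounds a simplifying disc $D' \subset H_0$ for $C'$.

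For the second step, I would argue by contradiction: suppose $D'$ is self-returning, with associated isotopy region $R'$ in $S \cut C'$. Since the isotopy is supported in $D_+$, outside $D_+$ one has $C' = C$ and the parallelity and semi-parallelity bundles of $S \cut C'$ and $S \cut C$ agree there. Beginning at $D'$ and following the parallelity (or semi-parallelity) bundle component adjacent to it, this component runs immediately alongside $B_1$, the corresponding component for $D$. Iterating through a possible interpolating 0-handle and second semi-parallelity component as in Definition~\ref{Def:IsotopyRegion}, $R'$ should trace the same sequence of 0-handles and 1-handles as $R$, shifted over by one parallelism class of $C$-arcs throughout. The hypothesis that no arc of $\partial_h R$ is normally parallel to $D \cap C$ is what guarantees that this parallel track is actually available: it prevents $R$ from colliding with its own horizontal boundary as we shift outward by a parallel copy. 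Since $R'$ is assumed self-returning, it closes up in $H_0$ just outside $R$, and then the disc containing $R \cup R'$ together with the arcs of $C$ sandwiched between them forms an enlargement of $R$ strictly larger than $D_+$, contradicting maximality. Therefore $D'$ cannot have been self-returning.

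The hard part will be the parallel-tracking argument: one must verify, working through the subcases of Definition~\ref{Def:IsotopyRegion} (single parallelity or semi-parallelity component versus interpolating handle plus second semi-parallelity component, interior-simplifying versus boundary-simplifying), that the components of $R'$ align with those of $R$ in the way described, and that the combined region together with the intermediate parallel arcs really does satisfy the requirements of an enlargement in the sense of Definition~\ref{Def:MaximalEnlargement}. The hypothesis on $\partial_h R$ is what makes this possible, but applying it correctly in each subcase is the delicate point.
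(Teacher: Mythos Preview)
Your approach is essentially the same as the paper's, though the paper is far more terse. The paper also identifies an outermost arc parallel to $D\cap C$ that is not moved by the isotopy (using both the count $|D_+\cap C|<k$ and the hypothesis that no arc of $\partial_h R$ is parallel to $D\cap C$), observes it bounds a simplifying disc $D'$ for $C'$, and then dispatches your entire Step~2 in a single line: ``The associated isotopy region $R'$ cannot be self-returning, as otherwise $\partial_h R'$ would have been included in $D_+$.'' This is exactly your parallel-tracking argument compressed: if $R'$ were self-returning, the extension of $\gamma_i$ along $\partial_h R'$ would be normally parallel to $R\cap C$, so $D_+$ could have been enlarged to swallow it, contradicting maximality. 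The subcase analysis you anticipate (interpolating handles, interior versus boundary simplifying) is not carried out in the paper; the one-line appeal to maximality of $D_+$ is deemed sufficient. One small point: in your Step~1 you assert that the replacement arcs do not enter the subdisc bounded by $\gamma_i$; this is where the hypothesis on $\partial_h R$ is actually doing work (the paper invokes it at precisely this stage), so you should make that dependence explicit rather than deferring it to Step~2.
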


\begin{proof}
Since there are more arcs of $C \cap \calH_0$ normally parallel to $D \cap C$ than there are arcs of $C \cap D_+$, and since the horizontal boundary of $R$ does not run parallel to $D \cap C$ at any point, we deduce that some arc of $C \cap \calH^0$ parallel to $D \cap C$ is not moved in the isotopy. An outermost such arc bounds a simplifying disc $D'$ for $C'$. The associated isotopy region $R'$ cannot be self-returning, as otherwise $\partial_h R'$ would have been included in $D_+$.
\end{proof}

We still have not fully dealt with the case of self-returning isotopy regions. For such regions containing an interior-simplifying disc, the following structure will be useful.

\begin{definition}
\label{Def:Reeb}
Let $S$ be a compact surface with a handle structure $\calH$. Let $C$ be a standard 1-manifold properly embedded in $S$. A \emph{Reeb region} is a subsurface $A$ of $S$ with the following
properties:
\begin{enumerate}
\item $A$ is a thin regular neighbourhood of a standard simple closed curve, and hence is an annulus or M\"obius band;
\item the intersection between $A$ and $C$ consists of a union of essential arcs in $A$, all of which are normally parallel;
\item each such arc is a concatenation of three arcs $\alpha_-, \alpha_0, \alpha_+$;
\item $\alpha_-$ starts at a point of $\partial A \cap \calH^0 \cap \calH^1$, goes into $\calH^0$ and then runs normally parallel to $\partial A$ in some direction, possibly winding several times around that component but going at least once around that component, and then ending on the same component of $\calH^0 \cap \calH^1$ that is started on;
\item $\alpha_0$ is non-normal arc properly embedded in $\calH^0$ with endpoints in the same component of $\calH^0 \cap \calH^1$;
\item $\alpha_+$ starts at a point of $\partial A$ in the same component of $\calH^0 \cap \calH^1$ as $\partial \alpha_-$, then winds around $\partial A$ the same number of times as $\alpha_-$ and then ends on the same component of $\calH^0 \cap \calH^1$ that is started on.
\end{enumerate}
A \emph{Reeb isotopy} is supported in a Reeb region as above. It replaces $A \cap C$ with normally parallel non-normal arcs in $\calH^0$. A Reeb region $A$ is \emph{maximal} if, for any other Reeb region $A'$ containing it, there is an isotopy of $S$ preserving $C$ and each handle of $\calH$ taking $A'$ to $A$.
\end{definition}

\begin{figure}[h]
\centering
\includegraphics[width=0.8\textwidth]{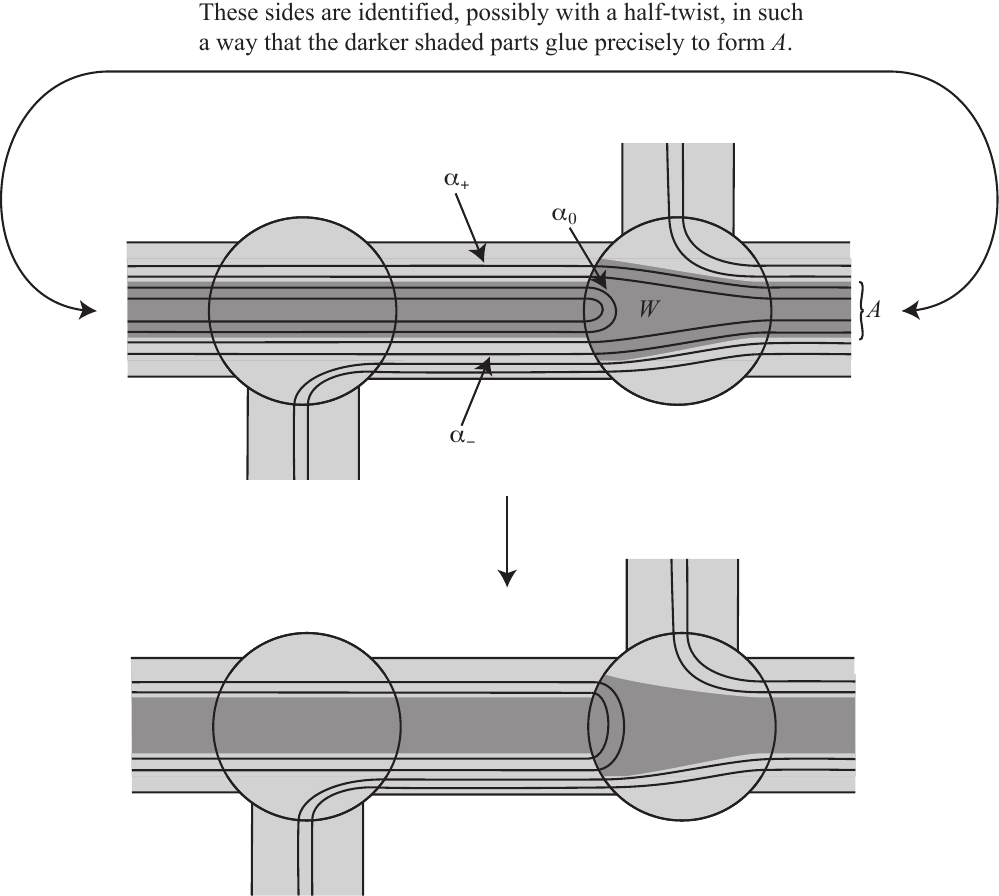}
\caption{A maximal Reeb region $A$ is shaded, and the associated Reeb isotopy is shown. The left and right sides of these figures are identified, but possibly via a reflection, in which case $A$ would be a M\"obius band}
\label{Fig:ReebRegion}
\end{figure}

In the case of a boundary-simplifying disc, we have the following structure.

\begin{definition}
\label{Def:SemiReeb}
Let $S$ be a compact surface with a handle structure $\calH$. Let $C$ be a standard 1-manifold properly embedded in $S$. A \emph{semi-Reeb region} is an annular subsurface $A$ of $S$ with the following
properties:
\begin{enumerate}
\item $A$ is a thin regular neighbourhood of a component of $\partial S$;
\item the intersection between $A$ and $C$ consists of a union of essential arcs in $A$, all of which are normally parallel;
\item each such arcs is a concatenation of two arcs $\alpha_0, \alpha_+$;
\item $\alpha_+$ starts at a point of $(\partial A - \partial S) \cap \calH^0 \cap \calH^1$, goes into $\calH^0$ and then runs normally parallel to $\partial A - \partial S$ in some direction, possibly winding several times around that component but going at least once around that component, and then ending on the same component of $\calH^0 \cap \calH^1$ that it started on;
\item $\alpha_0$ is non-normal arc properly embedded in $\calH^0$, ending on $\partial S$.
\end{enumerate}
A \emph{semi-Reeb isotopy} is supported in a semi-Reeb region as above. It replaces $A \cap C$ with normally parallel arcs in $\calH^0$. A semi-Reeb region $A$ is \emph{maximal} if, for any other semi-Reeb region $A'$ containing it, there is an isotopy of $S$ preserving $C$ and each handle of $\calH$ taking $A'$ to $A$.
\end{definition}

\begin{lemma}
Let $S$ be a compact surface with a handle structure $\calH$. Let $C$ be a standard 1-manifold properly embedded in $S$, and let $C'$ be the 1-manifold that results from a Reeb isotopy of $C$. Then $\mathrm{simp}(C') \leq \mathrm{simp}(C)$.
\end{lemma}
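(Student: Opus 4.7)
The approach is to realize the Reeb isotopy as a finite composition of simpler isotopies, each of which is an isotopy across the maximal enlargement of an isotopy region, so that Lemma~\ref{Lem:MaxEnlargementIsotopySimp} may be applied at each stage. Since that lemma guarantees that $\mathrm{simp}$ does not increase under any single such isotopy, the composition preserves the inequality, giving $\mathrm{simp}(C') \leq \mathrm{simp}(C)$.

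To build the decomposition, let $A$ be the Reeb region and let $n$ be the common number of times that $\alpha_-$ (equivalently $\alpha_+$) winds around $\partial A$ in Definition~\ref{Def:Reeb}. The non-normal arcs $\alpha_0$ for the parallel arcs of $A \cap C$ form a parallelism class of non-normal arcs in some $0$-handle $H_0$; let $D$ be the interior-simplifying disc bounded by an outermost such $\alpha_0$. The associated isotopy region $R$ then consists of $D$ together with the component $B_1$ of the parallelity bundle for $S \cut C$ incident to $D$; because $\alpha_-$ is normal and the Reeb region is self-returning, $B_1$ should extend along precisely one turn of the winding before terminating. The maximal enlargement $D_+$ of $R$ captures simultaneously all $k$ parallel copies of this configuration, so the isotopy across $D_+$ shaves one full turn off each arc of $A \cap C$. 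The intermediate 1-manifold $C^{(1)}$ still has a Reeb structure, now with winding number $n-1$, so iterating $n$ times produces the fully unwound 1-manifold $C'$ promised by the Reeb isotopy.

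The main obstacle will be the geometric verification that each single-turn step is genuinely an isotopy across a maximal enlargement of an isotopy region. Concretely, one must show that the parallelity bundle component $B_1$ incident to $D$ covers exactly one turn of the winding: it must extend along the normal arcs $\alpha_-$ and $\alpha_+$ for one full turn and must terminate at the return point where the outermost parallelism class of $\alpha_0$-type arcs reverses. This relies on a careful analysis of how the parallelism classes in the various $0$-handles along $A$ interact, together with the self-returning nature of the Reeb region, which guarantees that the parallelity bundle decomposes cleanly into ``single turn'' pieces rather than wrapping all the way around $A$ in a single component.
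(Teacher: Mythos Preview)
Your proposal is correct and follows essentially the same approach as the paper: decompose the Reeb isotopy into $n$ successive isotopies across maximal enlargements of isotopy regions (one per turn of the winding), then invoke Lemma~\ref{Lem:MaxEnlargementIsotopySimp} at each step. The paper's proof is in fact terser than yours, simply asserting this decomposition without the geometric verification you flag as the ``main obstacle''; your caution about checking that $B_1$ covers exactly one turn is reasonable, though in the Reeb configuration this follows directly from the fact that the arcs $\alpha_-$ and $\alpha_+$ are normally parallel to $\partial A$ and the non-normal $\alpha_0$ arcs form the only parallelism class that interrupts the parallelity bundle.
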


\begin{proof} 
We use the terminology of Definitions \ref{Def:Reeb} and \ref{Def:SemiReeb}. Suppose that each arc $\alpha_+$ winds $n$ times around $\partial A$. Then the Reeb or semi-Reeb isotopy is the composition of $n$ isotopies across maximal enlargements of isotopy regions. Hence, by Lemma \ref{Lem:MaxEnlargementIsotopySimp}, this does not increase the simplification number.
\end{proof}

Clearly, a Reeb or semi-Reeb isotopy across a maximal Reeb or semi-Reeb region $A$ does not necessarily reduce the simplification number since there is still a simplifying disc  in $A$ for the resulting 1-manifold. 

\begin{lemma}
\label{Lem:ReebConclusion}
Let $\calH$ be a handle structure for a compact surface $S$. Let $C$ be a standard 1-manifold properly embedded in $S$. Let $D$ be a simplifying disc for $C$, and let $R$ be the associated isotopy region. Suppose $R$ is self-returning and that no arc of $\partial_hR$ is normally parallel to $D \cap C$. Let $D_+$ be the maximal enlargement of $R$. Suppose that $|D_+ \cap C|$ is not strictly less than the number of arcs of $C \cap \calH_0$ normally parallel to $D \cap C$, including $D \cap C$. Then $D_+$ lies in a Reeb or semi-Reeb region for $C$.
\end{lemma}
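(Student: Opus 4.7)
Let $\alpha := D \cap C$ and let $\mathcal{P}$ denote the parallelism class of $\alpha$ among the arcs of $C \cap \calH^0$, so $|\mathcal{P}|$ is the number of arcs of $C \cap \calH^0$ normally parallel to $\alpha$ (including $\alpha$ itself). By Definition \ref{Def:MaximalEnlargement}, every arc of $D_+ \cap C$ lies in $\mathcal{P}$, so the hypothesis $|D_+ \cap C| \geq |\mathcal{P}|$ forces $D_+ \cap C = \mathcal{P}$; in particular, no arc of $\mathcal{P}$ lies outside $D_+$.

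The next step is to exploit the self-returning condition to describe the combinatorial structure of $D_+$. Write $R = D \cup B_1 \cup \cdots$ as in Definition \ref{Def:IsotopyRegion}. Self-returning says that the arc of $C' \cap \calH^0$ immediately past the far end of $R$ is normally parallel to $\alpha$, and the hypothesis that no arc of $\partial_h R$ is normally parallel to $\alpha$ rules out $R$ encountering a parallel copy of $\alpha$ before it closes up. Thus $R$ determines a ``track'' through $\calH$ which, once traversed, lands beside a second arc of $\mathcal{P}$. Building the maximal enlargement $D_+$ iterates this track, each iteration absorbing one more copy of $\alpha$ together with parallel copies of $B_1$ (and, where present, of the interpolating 0-handle and $B_2$). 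Maximality of $D_+$ together with $D_+ \cap C = \mathcal{P}$ forces the only possible termination: the next arc of $\mathcal{P}$ that the enlargement would pick up is $\alpha$ itself, since no other candidate remains.

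Consequently, thickening $D_+$ slightly along its two short ends (the two sub-arcs of $\partial D_+$ lying on opposite sides of $\alpha$) and identifying those ends along $\alpha$ gives a subsurface $A \subset S$ that closes up to an annulus or M\"obius band, according to whether the gluing is orientation-preserving. By construction $A$ is a thin regular neighbourhood of a standard simple closed curve (the core of the closed-up track), and $A \cap C$ is exactly $\mathcal{P}$, now viewed as essential arcs in $A$. Following each such arc along the track, it decomposes as $\alpha_- \cup \alpha_0 \cup \alpha_+$, where $\alpha_0$ is a non-normal 0-handle arc parallel to $\alpha$ and $\alpha_\pm$ wind around the core of $A$ via the parallelity bundle component $B_1$; the winding number equals the number of 0-handles traversed by the track of $R$. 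In the case where $D$ is interior-simplifying and $R$ is disjoint from $\partial S$, this matches Definition \ref{Def:Reeb} and $A$ is a Reeb region; in the case where $R$ contains a semi-parallelity component meeting $\partial S$, the analogous decomposition $\alpha_0 \cup \alpha_+$ against $\partial S$ matches Definition \ref{Def:SemiReeb} and $A$ is a semi-Reeb region. In either case $D_+ \subset A$, as required.

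The main obstacle in making this rigorous is verifying that the iterated track does not self-intersect before closing up along $\alpha$, so that $A$ is a genuinely embedded regular neighbourhood rather than an immersed one, and that no extra arc of $C$ sneaks into the thickening. This is precisely what the hypothesis ``no arc of $\partial_h R$ is normally parallel to $\alpha$'' controls: any premature self-overlap would deposit a further copy of $\alpha$ onto $\partial_h R$, and any other arc of $C$ entering the thin neighbourhood would, by $D_+ \cap C = \mathcal{P}$ and Lemma \ref{Lem:ArcTypesInPolygon}, have to be normally parallel to $\alpha$, and hence already included.
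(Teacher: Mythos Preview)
Your overall strategy is the same as the paper's, but there is a technical slip in the first paragraph. The arcs of $D_+\cap C$ are, by Definition~\ref{Def:MaximalEnlargement}, normally parallel to $R\cap C$, which is the long arc $\alpha\cup\partial_hR$, not to $\alpha$ alone. So it is not true that ``every arc of $D_+\cap C$ lies in $\mathcal{P}$''. What is true (and what you need) is that each arc of $D_+\cap C$ contains \emph{exactly one} sub-arc from $\mathcal P$: it contains at least one because it is parallel to $R\cap C\supset\alpha$, and at most one because of the hypothesis that no arc of $\partial_hR$ is normally parallel to $\alpha$. This still yields $|D_+\cap C|\le|\mathcal P|$, hence equality, and hence that every arc of $\mathcal P$ is absorbed into $D_+$. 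With this correction your argument goes through.

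For comparison, the paper's proof is much terser and avoids the iterative description entirely. It simply observes that after the isotopy across $D_+$, the arc $\partial D_+\cap C$ is incident to a non-normal arc of $C'\cap\calH^0$; taking $E$ to be the adjacent component of $\calH^0\cut C$ meeting $\calH^1$ in a single component (and $\partial S$ in at most one), one has that $D_+\cup E$ is the Reeb (or semi-Reeb) region. Your ``thickening along the short ends and identifying along $\alpha$'' is exactly the operation of adjoining this $E$; the paper just names the piece rather than describing the closing-up. Your final paragraph about embeddedness is addressing a point the paper leaves implicit.
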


\begin{proof}
Let $C'$ be the 1-manifold obtained by isotoping $C$ across $D_+$.
The arc $\partial D_+ \cap C$ is incident to a non-normal arc of $C' \cap \calH^0$. Let $E$ be the component of $\calH^0 \cut C$ that intersects $\calH^1$ in a single component and intersects $\partial S$ in at most one component. Then $D_+ \cup E$ is a Reeb annulus for $C$.
\end{proof}

\begin{lemma}
\label{Lem:ReebIsotopy}
Let $\calH$ be a handle structure for a compact surface $S$. Let $C$ be a standard 1-manifold properly embedded in $S$. Let $D$ be a simplifying disc for $C$, and suppose that $D$ lies in a maximal Reeb or semi-Reeb region $A$. Let $C'$ be the 1-manifold obtained by the Reeb or semi-Reeb isotopy, and let $D'$ be the resulting simplifying disc for $C'$. Suppose that $D'$ is self-returning. Then $D'$ satisfies the hypotheses of Lemma \ref{Lem:SelfReturningNotReeb}.
\end{lemma}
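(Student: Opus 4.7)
The plan is to assume $D'$ is self-returning and show that the remaining two hypotheses of Lemma \ref{Lem:SelfReturningNotReeb} must hold, namely (a) no arc of $\partial_h R'$ is normally parallel to $D' \cap C'$, and (b) $|D'_+ \cap C'|$ is strictly less than the number of arcs of $C' \cap \calH^0$ that are normally parallel to $D' \cap C'$. In each case the strategy is to derive a contradiction with the maximality of the Reeb or semi-Reeb region $A$ containing $D$, by constructing a strictly larger Reeb or semi-Reeb region for $C$.

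Let $H_0$ be the 0-handle into which the Reeb isotopy places its non-normal arcs, so that $D' \subset H_0$. Suppose first that (b) fails. Assuming (a), Lemma \ref{Lem:ReebConclusion} supplies a Reeb or semi-Reeb region $A''$ for $C'$, containing $D'_+$, with core curve $\gamma''$. Because after the Reeb isotopy the arcs of $C' \cap A$ are only short non-normal arcs in $H_0$ and do not wind around anything, the region $A''$ cannot be contained in $A$, so $\gamma''$ must leave $A$. The next step is to undo the Reeb isotopy on $A$: this is the identity outside of $A$ and replaces the non-normal arcs in $H_0$ by winding arcs around the core $\gamma$ of $A$. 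The arcs of $C$ corresponding to arcs of $C' \cap A''$ then have two winding parts: they wind around $\gamma$ inside $A$ and around $\gamma''$ outside of $A$. Splicing $\gamma$ and $\gamma''$ together through $H_0$ produces a standard simple closed curve $\tilde \gamma$, and a thin regular neighborhood $\tilde A$ of $\tilde \gamma$ is a Reeb region for $C$ that properly contains $A$ up to isotopy preserving $C$ and the handles of $\calH$, contradicting the maximality of $A$.

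Now suppose (a) fails, so that some arc $\beta$ of $\partial_h R'$ is normally parallel to $D' \cap C'$. Then $\beta$ is another non-normal arc in $H_0$, and the self-returning parallelity bundle incident to $D'$ wraps from $D' \cap C'$ back to $\beta$. Undoing the Reeb isotopy on $A$ converts this wrapping into additional winding of arcs of $C$ through the handles of $A$, beyond what $A$ already records. The same style of construction as in the previous paragraph then yields a strictly larger Reeb region $\tilde A$ for $C$, again contradicting maximality.

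The chief obstacle will be proving that the combined curve $\tilde \gamma$ is genuinely a standard simple closed curve and that $\tilde A$ satisfies every condition of Definition \ref{Def:Reeb}, in particular that arcs of $C \cap \tilde A$ remain normally parallel and admit the three-segment decomposition $\alpha_- \cdot \alpha_0 \cdot \alpha_+$. This requires a careful local analysis inside $H_0$ of how $\gamma$ and $\gamma''$ (or, in case (a), the loop creating $\beta$) fit together; the essential point is that $A$ interacts with the additional structure only through $H_0$, so the embeddedness of $C'$ allows $\tilde \gamma$ to be chosen embedded. The semi-Reeb variant is handled by the same argument with routine modifications involving semi-parallelity and interpolating handles.
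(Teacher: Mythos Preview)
Your overall strategy of contradicting the maximality of $A$ is the right one for hypothesis (b), and this is exactly what the paper does. However, your route is considerably more roundabout than necessary, and for hypothesis (a) you are working much harder than you need to.

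For (a), the paper gives a direct argument that does not invoke maximality of $A$ at all. After the Reeb or semi-Reeb isotopy, the only arcs of $C'$ inside $A$ are the non-normal arcs in $H_0$, all contained in $D'$. Hence the isotopy region $R'$ for $D'$, once it leaves $H_0$, runs through $A$ along the core, and if $R'$ is self-returning it must come back around exactly as the original isotopy region $R$ for $D$ did. In other words, $R'$ is a regular neighbourhood of $R$. From this structural fact, (a) is immediate: the arcs of $\partial_h R' \cap \calH^0$ sit where $\partial_h R$ sat, and these are not normally parallel to the non-normal arc $D' \cap C'$. No contradiction argument, no splicing, is required.

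For (b), both you and the paper contradict maximality of $A$, but the paper does so directly: if the maximal enlargement of $R'$ swallowed every arc of $C' \cap \calH^0$ parallel to $D' \cap C'$, then those additional parallel arcs of $C'$ (which equal arcs of $C$ outside $A$) could have been incorporated as further windings in the original Reeb region, enlarging $A$. There is no need to first build a Reeb region $A''$ for $C'$ via Lemma \ref{Lem:ReebConclusion}, undo the isotopy, and splice cores. Your splicing construction, which you correctly flag as the chief obstacle, is simply avoidable; the arcs in question were already present in $C$ and already ran parallel to $\partial A$, so the enlargement of $A$ is immediate.

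In short: your argument for (a) is sketchy and takes a harder path than needed, and your argument for (b) introduces an unnecessary intermediate construction. Both conclusions follow much more quickly once you recognise that $R'$ is a regular neighbourhood of $R$.
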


\begin{proof}
Let $R'$ be the isotopy region for $D'$. Since $R'$ is self-returning, it is a regular neighbourhood of the isotopy region for $D$. Hence, no arc of $\partial_h R' \cap \calH^0$ is normally parallel to $D' \cap C'$, which is one of the hypotheses of Lemma \ref{Lem:SelfReturningNotReeb}. Furthermore, suppose that the number of arcs of intersection between $C'$ and the maximal enlargement of $R'$ is not strictly less than the number of arcs of $C' \cap \calH^0$ normally parallel to $D' \cap C'$. Then we could have expanded $A$ to be a bigger Reeb or semi-Reeb region for $C$, contradicting its maximality. Thus, $D'$ does satisfy the hypotheses of Lemma \ref{Lem:SelfReturningNotReeb}.
\end{proof}

Thus, we are now in a position to give the algorithm {\sc Normalise a standard 1-manifold in a handle structure}.

\begin{proof}[Proof of Theorem \ref{Thm:Normalise1Manifold}]
We perform the following operations in a loop. First, we search for the existence of a maximal Reeb or semi-Reeb region. We will explain how to do this below. If there is such a Reeb or semi-Reeb region, we perform this Reeb or semi-Reeb isotopy, creating a 1-manifold $C'$. Denote the simplifying disc for $C'$ in the Reeb or semi-Reeb region by $D'$. 
If there was no Reeb or semi-Reeb region, then we let $C' = C$ and we pick any simplifying disc $D'$ for $C'$. 

We then perform the isotopy across the maximal enlargement of the isotopy region associated with $D'$. If this reduces the simplification number, then we return to the start of the loop. On the other hand, if the simplification number is unchanged, then we know that the new 1-manifold $C''$ has an arc of $C'' \cap \calH^0$ that runs parallel to $D' \cap C'$. An outermost such arc bounds a simplifying disc. We repeat the procedure in this paragraph with this disc. We continue until the simplification number decreases, at which point we return to the start of the loop. This continues until there are no more simplifying discs.

We now explain why this procedure works. If there is no Reeb or semi-Reeb region, then, by Lemma \ref{Lem:ReebConclusion}, each simplifying disc $D'$ for $C'$ satisfies at least one of the following: 
\begin{enumerate}
\item $D'$ is not self-returning;
\item the horizontal boundary of its isotopy region contains a standard arc that runs parallel to $D' \cap C'$;
\item the horizontal boundary of the isotopy region $R'$ does not contain a standard arc that runs parallel to $D' \cap C'$, and the number of arcs parallel to $D' \cap C'$ (including $D' \cap C'$) is greater than the number of arcs of $C'$ in the maximal enlargement of $R'$.
\end{enumerate}
In the first case, Lemma \ref{Lem:NotSelfReturning} gives that after performing at most $18||\calH|| + 4\mathrm{simp}(C)$ isotopies across maximal enlargements, the simplification number goes down. In the second case, a similar conclusion holds by Lemma \ref{Lem:SelfReturningHoriz}. In the third case, Lemma \ref{Lem:SelfReturningNotReeb} implies that the 1-manifold obtained by isotoping $C'$ across the maximal enlargement of $R'$ has a simplifying disc that is not self-returning. Hence, after at most $18||\calH|| + 4\mathrm{simp}(C)$ further isotopies, the simplification number goes does.
On other hand, if there is a Reeb or semi-Reeb region for $C$, then there is a maximal one $A$, and we let $C'$ be the 1-manifold obtained by the Reeb or semi-Reeb isotopy and we let $D'$ be the simplifying disc for $C'$ contained in $A$. Lemma \ref{Lem:ReebIsotopy} and Lemma \ref{Lem:SelfReturningNotReeb} imply that either $D'$ is not self-returning or the 1-manifold obtained by the isotopy across the maximal enlargement of $D'$ has such a simplifying disc. Hence, after at most $18||\calH|| + 4\mathrm{simp}(C)$ further isotopies, the simplification number decreases.

We now describe how to find a maximal Reeb region if one exists. We can readily find all the interior-simplifying discs for the 1-manifold $C$, by considering each parallelism class of arcs of $C \cap \calH^0$. We consider each interior-simplifying disc in turn. So let us focus on one such disc $D$ and its isotopy region $R$. We first determine whether $R$ is self-returning as follows. By applying {\sc Cut along standard 1-manifold} (Theorem \ref{Thm:CutAlong1Manifold}) to $S \cut C$ and Remark \ref{Rem:ReducedHSEmbeds}, we can  determine the component $W$ of $\calH^0 \cut C$ containing $\partial_v R \cut D$. (See Figure \ref{Fig:ReebRegion}.) Then $R$ is self-returning if and only if the following all hold:
\begin{enumerate}
\item $W$ lies in the interior of $S$;
\item the intersection between $\partial W$ and $\calH^0 \cap C$ consists of three arcs $\beta_-$, $\beta_0$ and $\beta_+$;
\item one of these arcs $\beta_0$ is normally parallel to $D \cap C$ and all arcs of $C \cap \calH^0$ between $\beta_0$ and $D \cap C$ are normally parallel;
\item the remaining two arcs $\beta_-$ and $\beta_+$ of $\calH^0 \cap C \cap \partial W$ are normally parallel to each other.
\end{enumerate}
These conditions are easily checked. Suppose that $R$ is self-returning. Then we can easily check whether the horizontal boundary of $R$ contains no arc of $\calH^0 \cap C$ that is normally parallel to $D \cap C$ using {\sc Location of parallelity bundle component} (Theorem \ref{Thm:LocationParBundle}). Using {\sc Isotopy across maximal enlargement} (Lemma \ref{Lem:IsotopyAcrossMaximalEnlargement}), we can determine the number $p$ of arcs of intersection between $C$ and the maximal enlargement of $R$. For $R$ to be part of a Reeb region, this number needs to be equal to the number of components of $C \cap \calH^0$ normally parallel to $C \cap D$, including $C \cap D$ itself. 

Thus, using the above procedure, we can determine whether $R$ is part of a Reeb region. Suppose it is. We then find the maximal Reeb region containing $R$, as follows. Let $\beta$ be the component of $\calH^0 \cap \calH^1$ incident to $D$. As in the proof of Lemma \ref{Lem:IsotopyAcrossMaximalEnlargement}, we can determine whether $R$ intersects $\beta - D$. If it does not, then we cut $C$ along $\beta$ and count the number of components of $C - \beta$ normally parallel to each component of $\partial_h R$. Let $q_1$ and $q_2$ be these number of arcs. Let $q$ be the largest integer multiple of $p$ that is at most $\min \{ q_1, q_2 \}$. Then the union of this many copies of $\partial_h R$, together with all arcs of $C \cap \calH^0$ parallel to $C \cap D$, forms the arcs of intersection between $C$ and the maximal Reeb region $A$. Removing all these arcs and replacing them with this many copies of $C \cap D$ achieves the Reeb isotopy. 

There is a similar procedure for finding a maximal semi-Reeb region if one exists and for performing the associated semi-Reeb isotopy.
\end{proof}

\begin{remark}
\label{Rem:CompositionSmallIsotopies}
The isotopies given in the above proof can be realised as a union of isotopies across simplifying discs and their incident 1-handles. However, of course, the number of such isotopies may be considerably higher than the number of moves in the above algorithm.
\end{remark}

\section{Minimal position for curves in surfaces}
\label{Sec:MinimalPositionCurves}

In this section, we embark on the computation of the geometric intersection number of two properly embedded 1-manifolds $P$ and $C$. In fact, we will actually isotope $C$ into minimal position with respect to $P$, but in the special case where $P$ has very restricted intersection with the triangulation or handle structure.

\begin{definition} 
Let $C$ and $P$ be 1-manifolds properly embedded in a compact surface $S$ that intersect transversely. A \emph{bigon} for $C$ and $P$ is a disc $D$ embedded in the interior of $S$ with interior disjoint from $C \cup P$, and with boundary that is the concatenation of an arc in $C$ and an arc in $P$. A \emph{half-bigon} a disc $D$ embedded in $S$ with interior disjoint from $C \cup P$, and with boundary that is the concatenation of an arc in $C$, an arc in $P$ and an arc in $\partial S$. The \emph{bigon number} $\mathrm{bigon}(C,P)$ is equal to the sum of the number of half-bigons and twice the number of bigons.
We say that $C$ and $P$ are in \emph{minimal position} if they have no bigon or half-bigon.
\end{definition}

The following key fact is well known \cite[Proposition 1.7 and Section 1.2.7]{FarbMargalit}.

\begin{lemma}
\label{Lem:MinimalNoBigon}
Let $C$ and $P$ be 1-manifolds properly embedded in a compact surface $S$ that intersect transversely. Then $|C \cap P|$ is minimised up to isotopy if and only if they have no bigon or half-bigon.
\end{lemma}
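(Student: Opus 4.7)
The forward implication is a routine isotopy. If $D$ is a bigon with $\partial D = \alpha \cup \beta$, $\alpha \subset C$, $\beta \subset P$, then an ambient isotopy supported in a small regular neighbourhood of $D$ pushes $\alpha$ across $D$ and a little past $\beta$, producing a 1-manifold isotopic to $C$ with two fewer points of intersection with $P$. A half-bigon admits the analogous push, adapted to touch $\partial S$, reducing $|C\cap P|$ by one. Hence minimal position forces the absence of bigons and half-bigons.

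For the converse, the plan is to pass to the universal cover $\pi \colon \tilde S \to S$ and lift the two 1-manifolds to $\tilde C = \pi^{-1}(C)$ and $\tilde P = \pi^{-1}(P)$; every connected component is a properly embedded line or arc in the topological disc $\tilde S$. The central claim is that, under the hypothesis that $C$ and $P$ have no bigons or half-bigons, any component $\tilde c$ of $\tilde C$ meets any component $\tilde p$ of $\tilde P$ in at most one point.

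Granting the claim, minimality follows quickly. Let $C'$ be any 1-manifold isotopic to $C$ and transverse to $P$. A lifted ambient isotopy carrying $C$ to $C'$ pairs each component $\tilde c$ of $\tilde C$ with a component $\tilde c'$ of $\pi^{-1}(C')$ sharing the same pair of endpoints on $\partial \tilde S$. Since $\tilde S$ is a disc, two proper arcs with fixed endpoints intersect any third proper arc $\tilde p$ in numbers of points of the same parity, the parity being determined by whether the endpoint pairs link on $\partial \tilde S$; our claim pins $|\tilde c \cap \tilde p|$ to the minimum allowed by that parity, so $|\tilde c \cap \tilde p| \leq |\tilde c' \cap \tilde p|$. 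Summing over deck-group orbits of pairs of components yields $|C\cap P| \leq |C'\cap P|$, which is the desired minimality.

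The main obstacle is thus the key claim, which I would prove by the standard innermost-region argument. Suppose for contradiction that $\tilde c \cap \tilde p$ contains two points $x_1, x_2$, and let $\tilde\alpha \subset \tilde c$, $\tilde\beta \subset \tilde p$ be the sub-arcs joining them. Together $\tilde\alpha \cup \tilde\beta$ bounds a region in $\tilde S$ which is a disc in the closed or interior case, or a disc meeting a boundary component of $\tilde S$ in a single arc in the boundary case. Among all such configurations, for all pairs of lifts in $\tilde C$ and $\tilde P$, choose one whose cobounded region $\tilde D$ minimises $|\mathrm{int}(\tilde D) \cap (\tilde C \cup \tilde P)|$. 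An outermost-arc argument inside $\tilde D$ shows that this minimum must be zero, and the same global minimality forces $\pi$ to be injective on $\mathrm{int}(\tilde D)$: any identification would yield a strictly smaller region after pulling back. Thus $\pi(\tilde D)$ is a bigon, or a half-bigon in the boundary case, for $C$ and $P$, contradicting the hypothesis. The subtle point, and the reason for minimising across all pairs of lifts rather than just the pair $(\tilde c, \tilde p)$, is precisely this injectivity step.
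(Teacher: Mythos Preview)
The paper does not actually prove this lemma; it simply records it as a well-known fact and cites \cite[Proposition 1.7 and Section 1.2.7]{FarbMargalit}. Your argument is essentially the standard universal-cover proof given in that reference, and the overall strategy is correct.

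Two points are treated a bit quickly and would need a sentence or two more in a self-contained write-up. First, the assertion that every lift is a properly embedded line or arc in the disc $\tilde S$ presumes that each component of $C$ and $P$ is essential; you should dispose separately of components that bound discs or are boundary-parallel (under the no-bigon/half-bigon hypothesis such a component must be disjoint from the other 1-manifold, and then it contributes nothing to either side of the inequality). Second, the injectivity of $\pi$ on the innermost region $\tilde D$ is the genuinely delicate step, and ``any identification would yield a strictly smaller region'' is not quite the argument: one really uses that a nontrivial deck transformation $g$ with $g(\tilde D)\cap\tilde D\neq\emptyset$ would force $g(\tilde c)$ or $g(\tilde p)$ to enter $\mathrm{int}(\tilde D)$, contradicting innermostness, together with the fact that no nontrivial deck transformation can stabilise two transverse lifts simultaneously. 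These are exactly the details Farb--Margalit spell out, so your sketch is on the right track.
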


\begin{theorem}[{\sc Minimal Position for 1-Manifolds}]
\label{Thm:MinimalPosition}
There is an algorithm that takes, as its input, the following:
\begin{enumerate}
\item a triangulation $\calT$ or handle structure $\calH$ of a compact surface $S$;
\item a 1-manifold $P$ properly embedded in $S$ that either is a subset of the 1-skeleton of $\calT$ or that intersects $\calH$ in the following way: it is disjoint from 0-handles; it intersects each 1-handle in the empty set or in the co-core of the handle; it intersects each 2-handle in at most one properly embedded arc;
\item a standard 1-manifold $C$;
\end{enumerate}
and provides as its output a normal 1-manifold $C'$ that is obtained from $C$ by an isotopy, plus possibly removing some components that are boundary parallel arcs and curves that bound discs. This 1-manifold $C'$ is in minimal position with respect to $P$ and has weight bounded above by $w(C)(1 + 2|\calT|)$ or $w(C)(1 + 2||\calH||)$. The running time is bounded above by a polynomial function of $\mathrm{simp}(C)$, $\log (w(C) + |\partial C|)$, and $|\calT|$ or $||\calH||$.
\end{theorem}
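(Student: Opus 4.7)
The plan is to incorporate $P$ into the combinatorial structure of $S$, apply Theorem~\ref{Thm:Normalise1Manifold} in the refined structure, and then verify that the resulting normal 1-manifold is automatically in minimal position with $P$. In the triangulation case we first pass to the dual handle structure, so it suffices to treat the handle structure case. Let $\calH'$ be the handle structure on $S' = S \cut P$ obtained by splitting each 1-handle crossed by $P$ (as a co-core) into two 1-handles, splitting each 2-handle that $P$ meets in one properly embedded arc into two 2-handles, and leaving the 0-handles unchanged (since $P$ is disjoint from $\calH^0$). Thus $||\calH'|| \leq 3||\calH||$, and the two copies of $P$ lie in $\partial S'$. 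The 1-manifold $C$ remains standard in $\calH'$: disjoint from the new 2-handles and parallel to the cores of the (possibly split) 1-handles. Because $P$ avoids $\calH^0$, the boundary of each 0-handle, together with the labels distinguishing its arcs of intersection with 1-handles, 2-handles and $\partial S$, is the same in $\calH$ and in $\calH'$; hence simplifying discs in 0-handles coincide and $\mathrm{simp}(C)_{\calH'} = \mathrm{simp}(C)_{\calH}$. Finally, $w(C)_{\calH'} = w(C) + |C \cap P|$ and $|\partial C|_{\calH'} = |\partial C| + 2|C \cap P|$, with $|C \cap P| \leq w(C)$ since each intersection of $C$ with $P$ occurs inside a 1-handle crossed by $P$, one per arc of $C$ in that handle.

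Next I would apply Theorem~\ref{Thm:Normalise1Manifold} to $C$ in $\calH'$, producing a normal 1-manifold $C'$ in $\calH'$ obtained from $C$ by an isotopy in $S'$ together with the discarding of some arcs parallel to $\partial S'$ or closed curves bounding discs. Re-gluing the copies of $P$, this realises $C'$ in $S$ as the image of $C$ under an isotopy, plus the removal of components parallel into $\partial S$ or bounding discs in $S$. Because the simplifying discs in $\calH$ and $\calH'$ coincide, $C'$ is also normal in $\calH$.

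The principal technical obstacle is to show that this $C'$ is automatically in minimal position with $P$ in $S$. By Lemma~\ref{Lem:MinimalNoBigon}, it suffices to rule out bigons and half-bigons of $C'$ with $P$. Suppose some such $D$ existed; viewed in $S'$, its boundary is a concatenation of an arc of $C'$ with one or two arcs in $\partial S'$, at least one lying on a copy of $P$. I would isotope $D$ to be transverse to $\calH'$, remove closed components of $D \cap (\calH')^1$ in the interior of $D$ by innermost-curve moves, and then examine $D \cap (\calH')^1$, which is a collection of properly embedded arcs in $D$. An outermost such arc in $D$ bounds a subdisc $D_0$ lying in a single 0-handle of $\calH'$, with boundary consisting of that arc together with a subarc of $\partial D$. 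A short case analysis on which arcs of $\partial D$ the endpoints of the outermost arc lie on, combined with one further innermost argument inside the 0-handle if $D_0 \cap C'$ is not already a single arc, extracts a simplifying disc for $C'$ in $\calH'$, contradicting normality.

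For the weight bound, normalization does not increase weight in $\calH'$, so $w(C')_{\calH} = w(C')_{\calH'} - |C' \cap P| \leq w(C)_{\calH'} \leq 2 w(C) \leq w(C)(1 + 2||\calH||)$. The running time of Theorem~\ref{Thm:Normalise1Manifold} applied in $\calH'$ is polynomial in $||\calH'||$, $\mathrm{simp}(C)_{\calH'}$ and $\log(w(C)_{\calH'} + |\partial C|_{\calH'})$, all polynomial in the corresponding quantities for $\calH$ and $C$; the cost of constructing $\calH'$ is similarly polynomial in $||\calH||$. The triangulation case follows by the same argument applied to the dual handle structure, and the analogous bound $w(C)(1 + 2|\calT|)$ is obtained in the same way.
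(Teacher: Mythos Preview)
Your approach is quite different from the paper's: rather than carrying out a bigon-elimination procedure in $S$ parallel to the normalisation of Section~\ref{Sec:Normalise} (which is what the paper does), you cut $S$ along $P$, normalise in $S' = S \cut P$, and re-glue. This is an attractive shortcut, but it has a genuine gap.

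The central problem is that normalisation in $S'$ does not descend to $S$. When you cut along $P$, each point of $C \cap P$ becomes two boundary points of $C$ in $S'$, one on each copy $P_+, P_-$ of $P$ in $\partial S'$. The normalisation isotopy of Theorem~\ref{Thm:Normalise1Manifold} is a \emph{proper} isotopy: it may slide these boundary points along $\partial S'$, and it may discard entire arc components that are parallel into $\partial S'$. Neither operation respects the identification $P_+ \leftrightarrow P_-$, so the output need not glue up to a 1-manifold in $S$, and even when it does, it need not be isotopic to $C$ in $S$. A concrete failure: take $S$ the torus, $P$ an essential simple closed curve, and $C$ isotopic to $P$ but drawn with two transverse intersections (one bigon). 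In $S'$ (an annulus) $C$ becomes two arcs $\alpha, \alpha'$, one with both endpoints on $P_+$ and the other with both on $P_-$; each is boundary-parallel in the annulus. Normalisation discards both and returns the empty 1-manifold --- but $C$ is essential in $S$, so the correct output is a curve isotopic to $P$ and disjoint from it. This is exactly the sort of information that the paper's bigon-elimination machinery (topological isotopy regions, Reeb regions, Lemmas~\ref{Lem:BigonDoesNotGoUp}--\ref{Lem:IncreaseInSimp}) is designed to retain by working in $S$ throughout.

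A smaller point: splitting a 1-handle along its co-core does not yield two 1-handles, since each piece is attached to $\calH^0$ along only one side. The construction the paper uses in Lemma~\ref{Lem:BoundBigon} is to \emph{remove} the handles meeting $P$; but that lemma only uses $\calH'$ to bound $\mathrm{bigon}(C,P)$, not to run the algorithm, precisely because of the re-gluing obstruction above.
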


When $P$ is arranged with respect to the handle structure $\calH$ as in (2) above, we say that it is \emph{cellular with respect to the cell structure dual to $\calH$}.

The proof will follow that in Section \ref{Sec:Normalise}, but with bigons and half-bigons replacing the role of simplifying discs. In Section \ref{Sec:Normalise}, the primary measure of complexity of the standard 1-manifold $C$ was the simplification number $\mathrm{simp}(C)$, but here we use $\mathrm{bigon}(C,P)$. The reader may wonder why the running time for {\sc Minimal Position for 1-Manifolds} does not depend on $\text{bigon}(C,P)$, given that this quantity is our primary measure of complexity. In fact, the running time does depend on $\text{bigon}(C,P)$, but this can be bounded above in terms of $||\calH||$ and $\text{simp}(C)$, as follows.

\begin{lemma}
\label{Lem:BoundBigon}
Let $S$ be a compact surface with a handle structure $\calH$. Let $P$ be a properly embedded 1-manifold that is cellular with respect to the cell structure dual to $\calH$, and let $C$ be a standard 1-manifold. Then
$$\mathrm{bigon}(C,P) \leq 20 ||\calH|| + 4 \mathrm{simp}(C).$$
\end{lemma}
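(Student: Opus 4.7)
The plan is to reduce to Lemma~\ref{Lem:NumberOfCurveComponents} by cutting $S$ along $P$. Form $S' = S \cut P$; the cellular hypothesis on $P$ guarantees that cutting splits each 1-handle containing $P$ as its co-core into two 1-handles and each 2-handle meeting $P$ into two 2-handles, while leaving the 0-handles untouched. Hence $S'$ inherits a handle structure $\calH'$ with $\|\calH'\| \le 2\|\calH\|$. Setting $C' = C \cap S'$, one checks that $C'$ is standard in $\calH'$: arcs of $C$ across a split 1-handle restrict to arcs parallel to the cores of the two sub-1-handles, and the intersection of $C$ with each 0-handle is unchanged. Because each 0-handle of $\calH'$ coincides with that of $\calH$, together with its decomposition of $\partial H_0$ into arcs of $\calH^{'1}$ and of $\partial S'$, the notion of simplifying disc transfers verbatim, so $\mathrm{simp}(C') = \mathrm{simp}(C)$.

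Next I would observe that bigons and half-bigons for $(C,P)$ in $S$ correspond, after cutting, to specific disc complementary regions of $C'$ in $S'$: a bigon $D$ becomes a disc bounded by an arc component $\alpha$ of $C'$ together with an arc of $\partial S'$ lying entirely in the $P$-portion, while a half-bigon becomes such a disc whose $\partial S'$-arc spans both the $P$-portion and a segment of the original $\partial S$. In either case $\alpha$ is a boundary-parallel component of $C'$ with at least one endpoint on the $P$-portion of $\partial S'$, and any given arc component supports at most two such discs, one per side. Consequently it suffices to bound the number of such boundary-parallel arc components of $C'$.

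To do this I would first discard components of $C'$ so that no two are normally parallel in $\calH'$, as in the proof of Lemma~\ref{Lem:NumberOfCurveComponents}; this does not lose any bigon or half-bigon, since parallel copies of $\alpha$ only give parallel copies of the associated disc region, which can be charged to the surviving representative. Applying Lemma~\ref{Lem:NumberOfCurveComponents} to the reduced $C'$ in $\calH'$ bounds its number of components by $10\|\calH'\| + 2\mathrm{simp}(C')$. Combining this with $\|\calH'\| \le 2\|\calH\|$, $\mathrm{simp}(C') = \mathrm{simp}(C)$, the at-most-two-sides observation, and the identity $\mathrm{bigon}(C,P) = 2\cdot\#\text{bigons} + \#\text{half-bigons}$ then yields the desired inequality.

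The main obstacle I anticipate is the final bookkeeping: the factor of $2$ from $\|\calH'\| \le 2\|\calH\|$, the factor of $2$ from "two sides per arc", and the factor of $2$ in front of $\#\text{bigons}$ must combine to give exactly $20\|\calH\| + 4\mathrm{simp}(C)$ rather than being compounded. Resolving this likely requires either a sharper estimate of $\|\calH'\|$ exploiting that cellular $P$ adds at most $|\calH^1|+|\calH^2|$ new handles, or a more refined correspondence that matches each bigon to a single component of $C'$ (rather than a side), so that the constants synchronise cleanly with the definition of the bigon number.
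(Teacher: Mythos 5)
Your strategy is the same as the paper's: cut along $P$, observe that $C\cut P$ is standard with controlled simplification number, bound its parallelism classes via Lemma~\ref{Lem:NumberOfCurveComponents}, and note that only outermost arcs can appear in bigons or half-bigons. The genuine gap is the one you flag yourself: your construction of $\calH'$ does not give the constants. Splitting a $1$-handle along its co-core does not produce two $1$-handles (each half is attached to the rest of the surface along only one arc of its boundary, so it is a collar, not a handle), and likewise for the halves of a $2$-handle; and even granting the construction, $\|\calH'\|\le 2\|\calH\|$ feeds a factor of $2$ into Lemma~\ref{Lem:NumberOfCurveComponents} that, compounded with the ``two sides per arc'' count and the weight $2$ assigned to bigons, lands you at something like $80\|\calH\|+8\,\mathrm{simp}(C)$ rather than $20\|\calH\|+4\,\mathrm{simp}(C)$.

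The paper's resolution is the first of the two options you propose: instead of splitting the handles that meet $P$, \emph{delete} them. Since $P$ meets each $1$-handle in its co-core and each $2$-handle in a single arc, every deleted handle, cut along $P$, deformation retracts onto its attaching region, so the union of the surviving handles is a handle structure $\calH'$ for a surface homeomorphic to $S\cut P$ with $\|\calH'\|\le\|\calH\|$ (not $2\|\calH\|$). Under this construction a simplifying disc for $C\cut P$ is still a simplifying disc for $C$, though an interior-simplifying disc may become boundary-simplifying, so one only gets $\mathrm{simp}(C\cut P)\le\mathrm{simp}(C)$ rather than your claimed equality -- but that is the direction needed. The components of $C\cut P$ then fall into at most $10\|\calH'\|+2\,\mathrm{simp}(C\cut P)\le 10\|\calH\|+2\,\mathrm{simp}(C)$ parallelism classes, and since only the outermost two members of each class can lie in the boundary of a bigon or half-bigon, the charging is done per class (each class contributing at most $2$) rather than per component and per side, which is how the final coefficient is $20=2\times 10$ rather than your compounded $2\times 2\times 10$. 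With the deletion construction and the per-class charging in place, the rest of your argument goes through as written.
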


\begin{proof}
We obtain a handle structure $\calH'$ for $S \cut P$ by removing the handles intersecting $P$. Hence, $||\calH'|| \leq ||\calH||$. The 1-manifold $C \cut P$ is standard in $\calH'$. Each simplifying disc for $C \cut P$ is a simplifying disc for $C$, but it may have become boundary-simplifying when previously it was interior-simplifying. Hence, $\mathrm{simp}(C \cut P) \leq \mathrm{simp}(C)$. The components of $C \cut P$ fall into at most $10 ||\calH'|| + 2 \mathrm{simp}(C \cut P)$ parallelism classes, by Lemma \ref{Lem:NumberOfCurveComponents}. In each parallelism class, only the outermost two 1-manifolds can be part of a bigon or half-bigon. The lemma follows immediately.
\end{proof}


\begin{corollary} 
Let $S$ be a compact surface with a triangulation $\calT$. Let $P$ be a properly embedded 1-manifold that is a subset of the 1-skeleton, and let $C$ be a normal 1-manifold. Then $\mathrm{bigon}(C,P) \leq  154 |\calT|$.
\end{corollary}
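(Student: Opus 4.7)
The plan is to invoke Lemma \ref{Lem:BoundBigon} with a handle structure $\calH$ for $S$ that is dual to $\calT$. I will first construct $\calH$ so that $P$ is cellular and $C$ is normal, then apply the lemma and bound $||\calH||$ in terms of $|\calT|$.

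For $\calH$, I plan to take a $0$-handle as a hexagonal region in the interior of each triangle of $\calT$ (namely the triangle with a thin collar around each of its three edges and a small disc around each of its three vertices removed), a $1$-handle as the thin collar through the midpoint of each edge (whose core runs perpendicular to the edge, connecting the two adjacent $0$-handles, or reaching $\partial S$ when the edge is on the boundary), and a $2$-handle as the small disc around each vertex. Then $|\calH^0| = F$, $|\calH^1| = E$ and $|\calH^2| = V$, where $F$, $E$ and $V$ are the numbers of triangles, edges and vertices of $\calT$. Since each triangle has three edges and three vertices, $E \leq 3F$ and $V \leq 3F$, so $||\calH|| \leq 7F = 7|\calT|$.

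The key dictionary is then to verify two compatibility facts. First, since each edge of $\calT$ coincides with the co-core of its $1$-handle and each vertex sits in its $2$-handle, the $1$-manifold $P \subset \calT^{(1)}$ is disjoint from the $0$-handles, meets each $1$-handle either in the co-core or in the empty set, and meets each $2$-handle in at most one arc (as $P$ is a $1$-manifold passing through any vertex in at most one through-arc). Hence $P$ is cellular in the sense required by Lemma \ref{Lem:BoundBigon}. Second, each normal arc of $C$ in a triangle $\Delta$ joins two distinct edges of $\Delta$ and so sits in the $0$-handle of $\Delta$ together with the two relevant $1$-handles, with the intersections with these $1$-handles parallel to the cores; thus $C$ is standard in $\calH$. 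The only potential simplifying discs in a $0$-handle would correspond either to an arc parallel to an edge of $\Delta$ or to an arc with both endpoints on a single edge, both forbidden by normality in $\calT$; so $\mathrm{simp}(C) = 0$ by Lemma \ref{Lem:NormalNoSimplifying}.

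With the hypotheses of Lemma \ref{Lem:BoundBigon} in place, the conclusion follows by direct substitution:
\[
\mathrm{bigon}(C,P) \;\leq\; 20\,||\calH|| + 4\,\mathrm{simp}(C) \;\leq\; 140\,|\calT| \;\leq\; 154\,|\calT|.
\]
The only delicate step is the triangulation-to-handle-structure dictionary above, particularly ruling out simplifying discs: one has to check that the obvious candidate $\beta$ running around a vertex of $\calT$ actually meets the $1$-handles in two non-trivial arcs (because the endpoints of a normal arc of $C$ in $H_0$ land in the interiors of the $1$-handle attaching arcs, never at the corners), and so is disqualified. Once this bookkeeping is done, everything else is elementary counting.
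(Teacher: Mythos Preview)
Your approach is the same as the paper's: pass to the handle structure dual to $\calT$ and invoke Lemma~\ref{Lem:BoundBigon}. The verification that $P$ is cellular is fine. However, the claim $\mathrm{simp}(C) = 0$ fails when $\partial S \neq \emptyset$, and this is precisely where the paper's argument diverges from yours.

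In the dual handle structure, simplices lying wholly in $\partial S$ do not give handles; in particular a boundary edge $e_1$ has no $1$-handle and a boundary vertex $v$ has no $2$-handle. (Your parenthetical ``or reaching $\partial S$ when the edge is on the boundary'' does not describe a valid $1$-handle: a $1$-handle must be attached along \emph{two} arcs to $0$-handles.) Now take a triangle $\Delta$ with $e_1 \subset \partial S$ meeting an interior edge $e_2$ at a boundary vertex $v$. In the $0$-handle $H_0$ for $\Delta$, the attaching arc of the $e_2$ $1$-handle is \emph{adjacent} to the arc $H_0 \cap \partial S$, with no $2$-handle arc separating them. A $\calT$-normal arc of $C$ joining $e_1$ to $e_2$ then has one endpoint on that $1$-handle arc and the other on the adjacent component of $H_0 \cap \partial S$; the disc it cuts off near $v$ has $\beta$ meeting $\calH^1$ in one arc and $\partial S$ in one arc, so it \emph{is} a boundary-simplifying disc. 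Your vertex-corner check (``$\beta$ meets the $1$-handles in two non-trivial arcs'') is only valid at interior vertices.

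The paper handles this by observing that $C$ can only fail to be $\calH$-normal via such boundary-simplifying discs, and that there are at most two per $0$-handle, so $\mathrm{simp}(C) \leq 2|\calH^0| \leq 2||\calH||$. This gives $20||\calH|| + 4\,\mathrm{simp}(C) \leq 28||\calH||$, and together with the sharper count $||\calH|| \leq (11/2)|\calT|$ (only interior edges contribute $1$-handles, so $|\calH^1| \leq (3/2)|\calT|$) one obtains $28 \cdot (11/2)|\calT| = 154|\calT|$. Your argument as written is complete only in the closed case.
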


\begin{proof} Dual to $\calT$ is a handle structure $\calH$. Each handle of $\calH$ arises from a simplex of $\calT$ that does not lie wholly in $\partial S$. The number of edges of $\calT$ not in $\partial S$ is at most $(3/2)|\calT|$. The number of vertices of $\calT$ is at most $3|\calT|$. Hence, the number of handles of $\calH$ is at most $(11/2)|\calT|$. The 1-manifold $C$ might not be normal with respect to $\calH$, but it has only boundary-simplifying discs and at most two of these can lie in each 0-handle. So by Lemma \ref{Lem:BoundBigon}, $\mathrm{bigon}(C,P) \leq 20 ||\calH|| + 4 \mathrm{simp}(C) \leq 28 ||\calH|| \leq 154 |\calT|$.
\end{proof}

In {\sc Minimal Position for 1-Manifolds}, our aim will be to isotope $C$ to a 1-manifold $C'$ with $\mathrm{bigon}(C',P) = 0$. Just as in Figure \ref{Fig:IncreaseSimplifyingIndex}, an isotopy of $C$ across a bigon or half-bigon may result in a 1-manifold $C'$ with $\mathrm{bigon}(C',P) > \mathrm{bigon}(C,P)$. In order to avoid this, we need to consider a more substantial move. We therefore need to introduce an analogue of the isotopy region for a simplifying disc, and to do so, we will introduce analogues of many of the definitions in Sections \ref{Sec:HSNormal}, \ref{Sec:Cut} and \ref{Sec:Normalise}. We will use the adjective `topological' for many of these analogues.

\begin{definition}
A \emph{topological parallelity region} for $C$ with respect to $P$ consists of a disc with interior disjoint from $C$ and $P$, and with boundary equal to the concatenation of an arc in $C$, an arc in $P$ or $\partial S$, an arc in $C$ and another arc in $P$ or $\partial S$. The \emph{topological parallelity bundle} is the subset of $S \cut C$ consisting of the union of the topological parallelity regions. It is an $I$-bundle $\calB$ over a 1-manifold, with $\partial_h \calB = \calB \cap C$. See Figure \ref{Fig:TopParRegion}.
\end{definition}

\begin{definition}
A \emph{topological semi-parallelity region} for $C$ with respect to $P$ is a disc with interior disjoint from $C$ and $P$, and with boundary equal to the concatenation of an arc in $C$, an arc in $P$, an arc in $\partial S$ and another arc in $P$. The \emph{topological semi-parallelity bundle} is the subset of $S \cut C$ consisting of the union of the semi-parallelity regions. It is an $I$-bundle $\calB$ over a 1-manifold, with $\partial_h \calB = \calB \cap (C \cup \partial S)$.
\end{definition}

\begin{definition}
A \emph{topological interpolating region} for $C$ and $P$ is a disc with interior disjoint from $C$ and $P$, and with boundary consisting of an arc in $C$, an arc in $P$, an arc in $C$, an arc in $P$ and an arc in $\partial S$.
\end{definition} 

\begin{figure}[h]
\centering
\includegraphics[width=0.75\textwidth]{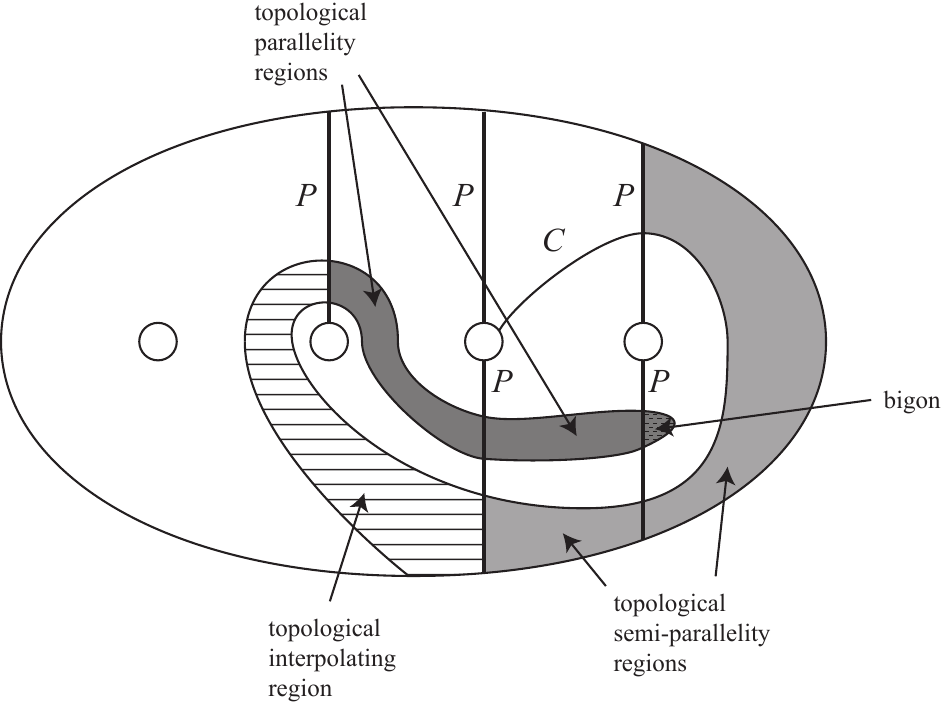}
\caption{A surface $S$ containing properly embedded 1-manifolds $C$ and $P$. Shown are various complementary regions, which are topological parallelity regions, topological semi-parallellity regions, a topological interpolating region and a bigon.}
\label{Fig:TopParRegion}
\end{figure}

\begin{definition}
Let $D$ be a bigon or half-bigon for $C$ and $P$. Then the \emph{associated topological isotopy region} $R$ is the union of the following subsets of $S \cut C$:
\begin{enumerate}
\item the bigon or half-bigon $D$;
\item the component $B_1$ of the topological parallelity or topological semi-parallelity bundle incident to $D$;
\item if $D$ is a bigon and $B_1$ is incident to a topological interpolating region, then this topological interpolating region is also part of $R$ and so is the incident component $B_2$ of the topological semi-parallelity bundle;
\item a small regular neighbourhood of $\partial_v B_1$ in $S \cut C$, as well as a small regular neighbourhood of $\partial_v B_2$ if $B_2$ is defined.
\end{enumerate}
We say that the \emph{horizontal boundary} of $R$ is $(\partial R \cap C) \cut D$.
\end{definition}

\begin{definition}
We say that two arcs of $C \cut P$ in $S \cut P$ are \emph{topologically parallel} if they cobound a topological parallelity region. More generally, if $\alpha_1$ and $\alpha_2$ are sub-arcs of $C$ with $\partial \alpha_1 \subset P$ and $\partial \alpha_2 \subset P$, we say that these arcs are \emph{topologically parallel} if there is a disc $R$ in $S$, such that $\partial R$ is the concatenation of $\alpha_1$, an arc in $P$ or $\partial S$, $\alpha_2$ and another arc in $P$ or $\partial S$, and $R$ is a union of topological parallelity regions.
\end{definition}

The following analogue of Lemma \ref{Lem:SimpDoesNotGoUp} holds, with essentially the same proof, which is omitted.

\begin{lemma}
\label{Lem:BigonDoesNotGoUp}
Let $C$ and $P$ be 1-manifolds properly embedded in a compact surface $S$ that intersect transversely.
Let $D$ be a bigon or half-bigon for $C$ and $P$.
Let $C'$ be the 1-manifold obtained by isotoping $C$ across the topological isotopy region associated with $D$. Then $\mathrm{bigon}(C', P) \leq  \mathrm{bigon}(C,P)$ and $|C' \cap P| < |C \cap P|$. Furthermore, if $\mathrm{bigon}(C',P) =  \mathrm{bigon}(C,P)$, then there is an arc of $C' \cut P$ that is topologically parallel to $D \cap C$, and all the components of $C \cut P$ between these two arcs are topologically parallel to $D \cap C$.
\end{lemma}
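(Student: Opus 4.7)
The plan is to mimic the proof of Lemma \ref{Lem:SimpDoesNotGoUp} essentially verbatim, replacing ``simplifying disc'' with ``bigon or half-bigon'', replacing ``0-handle'' with ``complementary region of $S \cut P$'', replacing the simplification number's weighting by the analogous weighting in $\mathrm{bigon}(C,P)$ (half-bigons count once, bigons count twice), and replacing ``normally parallel'' with ``topologically parallel''. Concretely, I would define
$$f \colon \{\text{bigons and half-bigons for } C' \text{ and } P\} \longrightarrow \mathbb{P}\{\text{bigons and half-bigons for } C \text{ and } P\}$$
by letting $f(D')$ be the collection of bigons and half-bigons for $C$ and $P$ that are contained in $D'$. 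The goal is to establish the four properties from the proof of Lemma \ref{Lem:SimpDoesNotGoUp}: (1) $f(D') \neq \emptyset$ for every $D'$; (2) $f(D'_1) \cap f(D'_2) = \emptyset$ for distinct $D'_1, D'_2$; (3) if $D'$ is a bigon, then every element of $f(D')$ is a bigon; and (4) if $f(D') = \{D_1\}$, where $D_1$ is a bigon exactly when $D'$ is a bigon, then $C \cap \mathrm{int}(D')$ consists of arcs, all topologically parallel to $\partial D' \cap C'$.

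Properties (2) and (3) are immediate: distinct $D'_1, D'_2$ are disjoint so the bigons/half-bigons they contain are disjoint; a bigon $D'$ is disjoint from $\partial S$, so any disc contained in $D'$ is also disjoint from $\partial S$ and hence can only be a bigon. Property (4) follows because any non-parallel configuration of arcs $C \cap \mathrm{int}(D')$ would produce outermost sub-discs of $D'$ with boundaries on $C \cup P$ (or $C \cup P \cup \partial S$), which are additional bigons/half-bigons contradicting $|f(D')|=1$; the matching type condition then forces topological parallelism to $\partial D' \cap C'$. Combining (1)--(4) with the weighting in Definition of $\mathrm{bigon}(C,P)$, I get $\mathrm{bigon}(C',P) \leq \mathrm{bigon}(C,P)$, and in the case of equality the hypotheses of (4) are satisfied by the bigon/half-bigon $D'$ containing $D$, which forces the parallelism conclusion of the lemma.

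The main obstacle, and the step that requires genuine work rather than translation, is establishing property (1). I follow the original proof: let $D'$ be a bigon or half-bigon for $C'$ that is not already one for $C$, and let $\alpha' = \partial D' \cap C'$. Split into cases according to whether $\alpha' \subset C$ or not. If $\alpha' \subset C$, then the failure of $D'$ to be a bigon/half-bigon for $C$ must be due to arcs of $C$ crossing $\mathrm{int}(D')$, and at least one outermost such arc cuts off a bigon/half-bigon for $C$ inside $D'$ that was destroyed by the isotopy. If $\alpha' \not\subset C$, then $\alpha'$ contains the arc $C' \setminus C$, so the topological isotopy region $R$ meets $\alpha'$. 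Let $R_0$ be the component of $R$ meeting $\alpha'$, where $R_0$ is a topological parallelity, topological semi-parallelity, or topological interpolating region. If $\mathrm{int}(R_0)$ is disjoint from $D'$, then $R_0 \cup D'$ itself would have been incorporated into the isotopy region associated with $D$ (as either a parallelity/semi-parallelity region or via the interpolating clause), contradicting the assumption that $D'$ is not part of $R$. If instead $R_0 \subset D'$, then $R_0$ cuts $D'$ into pieces, each of which contains at least one bigon/half-bigon for $C$ that lies in $D'$ and was removed by the isotopy; a bigon $D'$ yields at least two bigons for $C$, while a half-bigon $D'$ yields at least one bigon for $C$, exactly matching the weighting needed in property (3).

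Finally, the strict inequality $|C' \cap P| < |C \cap P|$ is straightforward: the interiors of $D$ and the parallelity/semi-parallelity/interpolating regions comprising $R$ are disjoint from $P$, so the isotopy pushes the two (or one, in the half-bigon case) intersections of $C$ with $P$ along $\partial D$ off of $P$ without creating new intersections, and this reduction happens for at least one arc of $C$ (namely the one containing $\alpha$). Combined with the conclusion above, this completes the proof.
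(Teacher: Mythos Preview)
Your proposal is correct and takes essentially the same approach as the paper: the paper explicitly states that Lemma \ref{Lem:BigonDoesNotGoUp} ``holds, with essentially the same proof'' as Lemma \ref{Lem:SimpDoesNotGoUp} and omits the details, and your argument is precisely that translation. One minor terminological point: when you write ``let $R_0$ be the component of $R$ meeting $\alpha'$'', you should say the component of $R$ in the region of $S \cut P$ incident to the arc $C' \setminus C$ (the analogue of $R \cap \calH^0$ in the original), since $R$ itself is connected; but your subsequent case analysis makes clear this is what you intend.
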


The following is the version of Definition \ref{Def:MaximalEnlargement}.

\begin{definition}
\label{Def:MaximalEnlargementBigon}
Let $C$ and $P$ be 1-manifolds properly embedded in $S$ that intersect transversely. Let $D$ be a bigon or half-bigon for $C$ and $P$. Let $R$ be the associated topological isotopy region. Then an \emph{enlargement} of $R$ consists of a disc $D_+$ containing $R$ and where $C \cap D_+$ consists of arcs all parallel to $R \cap C$, one of which is lies in $\partial D_+$. An enlargment is \emph{maximal} if it does lie in a bigger enlargement, up to an isotopy that preserves $C$ and $P$.
The \emph{isotopy across $D_+$} replaces the arcs $D_+ \cap C$ with arcs  parallel to $\partial R \cut (C \cup \partial S)$.
\end{definition}

We also have the analogue of {\sc Isotopy across maximal enlargement} (Lemma \ref{Lem:IsotopyAcrossMaximalEnlargement}) and {\sc Location of parallelity bundle component} (Theorem \ref{Thm:LocationParBundle}).

\begin{lemma}[{\sc Isotopy across maximal enlargement of bigon/half-bigon}]
There is an algorithm that takes as its input the following:
\begin{enumerate}
\item a triangulation $\calT$ or handle structure $\calH$ for a compact surface $S$;
\item a properly embedded essential 1-manifold $P$ that either is a subset of the 1-skeleton of $\calT$ or is cellular with respect to the cell complex dual to $\calH$;
\item a standard 1-manifold $C$ properly embedded in $S$;
\item an arc of $C \cap \calH^0$ transversely oriented into a bigon or half-bigon $D$ for $C$ and $P$;
\end{enumerate}
and outputs the standard 1-manifold $C'$ that is obtained by the isotopy across the maximal enlargment of the isotopy region associated with $D$ and possibly removing some boundary-parallel arc components. It also provides the location of the sub-arc of $P$ that the components of $C' \cut C$ run along. The running time is bounded above by a polynomial function of $|\calT|$ or $||\calH||$, $\mathrm{simp}(C)$, and $\log (w(C) + |\partial C|)$.
\end{lemma}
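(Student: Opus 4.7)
The plan is to reduce to Lemma \ref{Lem:IsotopyAcrossMaximalEnlargement} by cutting $S$ along $P$. Form the surface $S' := S \cut P$ with its inherited handle structure $\calH'$: in the handle structure case, since $P$ is cellular, the 0-handles of $\calH$ are preserved, each 1-handle crossed by $P$ is split along its co-core into two 1-handles, and each 2-handle crossed by $P$ is split into two 2-handles, so $||\calH'|| \leq 3||\calH||$. In the triangulation case we first pass to the handle structure dual to $\calT$, and then cut. The 1-manifold $C$ lifts to the standard 1-manifold $C' := C \cut P$ in $\calH'$, with $w(C') + |\partial C'| = O(w(C) + |\partial C|)$ and $\mathrm{simp}(C') \leq \mathrm{simp}(C) + O(||\calH||)$.

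Under this cut, the topological structures on $(C,P)$ in $S$ correspond exactly to the handle-structure notions for $C'$ in $\calH'$. A topological parallelity region (respectively semi-parallelity region, interpolating region) for $(C,P)$ in $S$ lifts to a union of parallelity handles (respectively semi-parallelity handles, interpolating 0-handles) of $S' \cut C'$ in $\calH'$. Hence the topological isotopy region of the bigon or half-bigon $D$ lifts to the ordinary isotopy region of $D$ viewed as a simplifying disc for $C'$ in $\calH'$, and the topological maximal enlargement of Definition \ref{Def:MaximalEnlargementBigon} corresponds to the ordinary maximal enlargement of Definition \ref{Def:MaximalEnlargement}. We therefore invoke the algorithm of Lemma \ref{Lem:IsotopyAcrossMaximalEnlargement} with input $\calH'$, $C'$, and the lifted arc of $C' \cap (\calH')^0$. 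Its output is a standard 1-manifold $\overline{C}$ in $\calH'$ that realises the isotopy across the maximal enlargement, together with the number of arcs meeting that enlargement.

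To produce the required output in $S$, we re-glue along $P$. Each handle of $\calH'$ carries a label recording which handle of $\calH$ it descends from and, for split handles, which side of $P$ it lies on; consequently the sub-arc of $P$ along which each new arc of $C' \cut C$ runs can be read off directly from these labels, supplying the location data required by the statement. The hardest point is bookkeeping: an arbitrary bigon $D$ need not lie in a single 0-handle of $\calH'$ since it may traverse several handles of $\calH$, so the literal hypothesis of Lemma \ref{Lem:IsotopyAcrossMaximalEnlargement} (that $D$ be a simplifying disc in a single 0-handle) may fail. We remedy this by pre-subdividing $\calH'$: slice each 1-handle traversed by $D$ in a controlled manner, adding only $O(||\calH||)$ new handles, so that $D$ becomes a bona fide simplifying disc in the refined structure. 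Once this pre-processing is carried out, the correspondence between topological and handle-structure parallelity is exact, the algorithm of Lemma \ref{Lem:IsotopyAcrossMaximalEnlargement} applies, and the desired polynomial running-time bound in terms of $|\calT|$ or $||\calH||$, $\mathrm{simp}(C)$, and $\log (w(C) + |\partial C|)$ is inherited directly, since every parameter is preserved up to polynomial blow-up.
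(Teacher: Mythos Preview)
Your reduction---cut $S$ along $P$ and invoke Lemma~\ref{Lem:IsotopyAcrossMaximalEnlargement} directly---is a natural idea, but there is a genuine gap in the step where you claim the bigon $D$ becomes a simplifying disc in $\calH'$.  A simplifying disc (Definition~\ref{Def:SimpInHS}) must lie in a \emph{single} $0$-handle.  After cutting along $P$, the bigon $D$ becomes a disc exhibiting the arc $\alpha' = (D\cap C)\cap S'$ as a boundary-parallel arc component of $C'$; but $\alpha'$ typically runs through many $0$-handles and $1$-handles of $\calH'$, and the disc it bounds with $\partial S'$ is correspondingly spread over many handles.  Your proposed remedy---``pre-subdividing $\calH'$ by slicing each $1$-handle traversed by $D$''---goes in the wrong direction: subdivision refines the handle structure, so a region that already spans several handles will span even more of them afterwards, not fewer.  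There is no reason why $\alpha'$ should have any simplifying disc at all in $\calH'$: a boundary-parallel arc in a handle structure can be perfectly normal (think of a normal arc in a handle decomposition of a disc), in which case the normalisation procedure simply discards it rather than simplifying it.  And even if $\alpha'$ did happen to have a simplifying disc somewhere, applying Lemma~\ref{Lem:IsotopyAcrossMaximalEnlargement} to that disc would perform a different isotopy from the one you want---the isotopy across the maximal enlargement of \emph{that} disc's isotopy region, not of the topological isotopy region associated with $D$.

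The paper's approach is not to reduce to Lemma~\ref{Lem:IsotopyAcrossMaximalEnlargement} but to \emph{re-run its proof} in the topological setting: one replaces the parallelity bundle by the topological parallelity bundle, uses the analogue of Theorem~\ref{Thm:LocationParBundle} to locate the endpoints of the horizontal boundary of the topological isotopy region, and then carries out the same binary-search-via-AHT argument to count the parallel copies in the maximal enlargement.  The point is that the AHT machinery in the proof of Lemma~\ref{Lem:IsotopyAcrossMaximalEnlargement} never actually needed $D$ to sit in a single $0$-handle; it only needed to locate $\partial\alpha$ and to count arcs of $C$ parallel to $\alpha$, and both of these make sense and can be computed in the topological setting.
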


We also have the analogue of Definition \ref{Def:SelfReturning}.

\begin{definition}
\label{Def:SelfReturningBigon}
Let $D$ be a bigon or half-bigon for $C$ and $P$, and let $R$ be the associated topological isotopy region. Let $C'$ be the 1-manifold obtained by the isotopy across $R$. Then $D$ and $R$ are \emph{self-returning} if the arc of $C' \cut P$ incident to $R$ is topologically parallel to $D \cap C$, and all the components of $C \cut P$ between these two arcs are topologically parallel to $D \cap C$.
\end{definition}

The following is a version of Lemma \ref{Lem:NotSelfReturning}, with almost the same proof.

\begin{lemma}
\label{Lem:NotSelfReturningBigon}
Let $C$ and $P$ be 1-manifolds properly embedded in a compact surface $S$ that intersect transversely. Let $D$ be a bigon or half-bigon for $C$ and $P$. Suppose that $D$ is not self-returning. Then after performing at most $-8 \chi(S) + 3 \mathrm{bigon}(C,P) + |\partial P|$ isotopies across maximal enlargements of topological isotopy regions containing $D$, the resulting 1-manifold $C''$ satisfies $\mathrm{bigon}(C'',P) < \mathrm{bigon}(C,P)$.
\end{lemma}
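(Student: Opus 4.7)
The plan is to mimic the strategy of Lemma \ref{Lem:NotSelfReturning} line by line, with bigons/half-bigons replacing simplifying discs, topological parallelity regions replacing parallelity handles, topological isotopy regions replacing isotopy regions, and Lemma \ref{Lem:BigonDoesNotGoUp} replacing Lemma \ref{Lem:SimpDoesNotGoUp}. Let $R$ be the topological isotopy region associated with $D$ and let $D_1$ be its maximal enlargement. If every arc of $C \cut P$ topologically parallel to $D \cap C$ already lies in $D_1$, then the isotopy across $D_1$ removes all of them; because $D$ is not self-returning, no new arc of $C' \cut P$ topologically parallel to $D \cap C$ is created along $\partial R$, so by Lemma \ref{Lem:BigonDoesNotGoUp} the bigon count strictly drops and we are done in one step.

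Otherwise, choose the arc $\beta_1$ of $C \cut P$ topologically parallel to $D \cap C$ that is outermost in $S \cut C$, extend it maximally so that it runs alongside $\partial D_1$, and observe that it must diverge from $\partial D_1$ at some point; at that point $\partial D_1$ contains an arc of $C \cut P$ that is outermost in its own topological parallelism class. Enlarge $D_1$ to a disc $D_2$ by absorbing $\beta_1$ and all its topological parallel copies, and iterate to form $D_1 \subset D_2 \subset \cdots \subset D_r$. Each step of the iteration produces a fresh outermost arc of $C \cut P$ in its topological parallelism class, so $r$ is bounded by the total number of topological parallelism classes of $C \cut P$ in $S \cut P$.

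The one non-routine ingredient is the bound $r \leq -8\chi(S) + 3\,\mathrm{bigon}(C,P) + |\partial P|$, which is a topological analogue of Lemma \ref{Lem:NumberOfPairingsCurve}. I would prove it by an Euler-characteristic count in $S \cut P$: take one representative of each topological parallelism class of $C \cut P$, obtaining a collection $\overline{C}$ of pairwise non-topologically-parallel arcs in $S \cut P$. A disc component of $(S \cut P) \cut \overline{C}$ whose boundary meets only two arcs of $\overline{C}$ and no extra boundary structure would exhibit a topological parallelism between two arcs of $\overline{C}$, and a disc bounded by a single arc of $\overline{C}$ together with a sub-arc of $P \cup \partial S$ would be a topological parallelity region rolled into a bigon or half-bigon, both forbidden after subdividing $\partial(S\cut P)$ by placing an auxiliary vertex in each bigon/half-bigon (one per unit of $\mathrm{bigon}(C,P)$) and in each component of $\partial P$. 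Summing the discrete Gauss--Bonnet style complexity of the cells of $(S \cut P) \cut \overline{C}$ against $2\chi(S\cut P) = 2\chi(S)$ (arcs of $P$ do not change Euler characteristic) yields the claimed linear bound; after performing $r$ isotopies the hypothesis of the first paragraph is met, and $\mathrm{bigon}$ strictly decreases.

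The main obstacle is exactly the Euler-characteristic bookkeeping behind the coefficient $-8\chi(S) + 3\,\mathrm{bigon}(C,P) + |\partial P|$: one must verify that subdividing at the bigons, half-bigons and boundary endpoints of $P$ produces enough extra vertices to rule out topologically parallel pairs among $\overline{C}$, and that the constants come out correctly when $P$ includes closed curve components, arcs with endpoints on $\partial S$, and when $\partial S$ contributes its own half-bigon configurations. The rest of the argument is a direct translation of the proof of Lemma \ref{Lem:NotSelfReturning}.
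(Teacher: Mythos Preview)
Your proposal is correct and follows essentially the same approach as the paper: the paper explicitly states that the proof is that of Lemma~\ref{Lem:NotSelfReturning} with Lemma~\ref{Lem:NumberOfPairingsCurve} replaced by the Euler-characteristic bound of Lemma~\ref{Lem:NumberNonParallelRegions}, which gives at most $-4\chi(S) + (3/2)\,\mathrm{bigon}(C,P) + |\partial P|/2$ topological parallelism classes (hence at most twice that many outermost arcs). The paper's proof of that bound uses an index function $I(R) = -\chi(R) + \tfrac{1}{4}|R \cap C \cap P| + \tfrac{1}{4}|R \cap C \cap \partial S| + \tfrac{1}{4}|R \cap P \cap \partial S|$ summed over complementary regions rather than your subdivision argument, but the two are equivalent Euler-characteristic counts.
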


The only deviation from the proof of Lemma \ref{Lem:NotSelfReturning} is that instead of using Lemma \ref{Lem:NumberOfPairingsCurve}, we use the following.

\begin{lemma}
\label{Lem:NumberNonParallelRegions}
Let $C$ and $P$ be 1-manifolds properly embedded in the compact surface $S$ that intersect transversely. Suppose that no component of $C$ or $P$ bounds a disc in the interior of $S$ with interior disjoint from $C \cup P$, and no component is topologically parallel to an arc in $\partial S$, via a disc with interior disjoint from $C \cup P$. 
Then the number of topological parallelism classes of components of $C \cut P$ that are not disjoint from $P$ is at most $-4\chi(S) + (3/2) \mathrm{bigon}(C,P) + |\partial P|/2$.
\end{lemma}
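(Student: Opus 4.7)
I will prove this by adapting the Euler-characteristic argument of \reflem{ArcTypesInPolygon} from polygons to general surfaces.

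First, pick one representative arc from each of the $k$ topological parallelism classes under consideration, giving a 1-complex $\overline{C}$ of $k$ sub-arcs of $C$, each with its two endpoints on $P \cup \partial S$. Consider the graph $G = \overline{C} \cup P \cup \partial S$ embedded in $S$. Inclusion-exclusion gives $\chi(\overline{C}) = k$, $\chi(P \cup \partial S) = -|\partial P|/2$ (using $|\partial P| = 2 a_P$, where $a_P$ is the number of arc components of $P$), and $|\overline{C} \cap (P \cup \partial S)| \leq 2k$, whence $\chi(G) \geq -k - |\partial P|/2$.

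Next, subdivide $G$ by placing extra vertices in the boundary of each bigon and half-bigon of $(\overline{C},P)$, so that no disc region of $S$ cut along the subdivided graph is a bigon or half-bigon of $(\overline{C}, P)$. Since any bigon (resp.\ half-bigon) of $(\overline{C},P)$ contains at least one bigon (resp.\ half-bigon) of $(C,P)$, obtained by taking an innermost sub-arc of $C \setminus \overline{C}$ in its interior, the number of added vertices is controlled by the corresponding counts for $(C,P)$ and in particular by $\mathrm{bigon}(C,P)$. The subdivision does not change $\chi(G)$.

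Now apply $\chi(S) = \chi(G) + \sum_R \chi(R)$, where $R$ ranges over the connected components of $S \setminus G$. Since each $\chi(R) \leq 1$, with equality exactly when $R$ is a disc, the number $F_d$ of disc regions satisfies $F_d \geq \chi(S) + k + |\partial P|/2$. On the other hand, by the choice of $\overline{C}$ no 4-gon disc region is a topological parallelism region for $(\overline{C},P)$; by the subdivision, no disc region is a bigon or half-bigon for $(\overline{C},P)$; and the hypotheses on $C$ and $P$ rule out further degenerate configurations such as discs bounded solely by a component of $P$ or by an arc of $C$ together with a single arc of $\partial S$. A careful double count of edges against disc regions, using these exclusions together with data on how disc regions meet $\partial S$, yields an upper bound $F_d \leq -3\chi(S) + (3/2)\mathrm{bigon}(C,P) + |\partial P|$. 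Combining the two inequalities gives $k \leq -4\chi(S) + (3/2)\mathrm{bigon}(C,P) + |\partial P|/2$.

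The main obstacle is the final double-counting step: one must carefully enumerate the disc regions that remain after the exclusions, including topological semi-parallelity regions, topological interpolating regions, and mixed regions touching $\partial S$, and extract precisely the constants $-3$, $3/2$, and $1$ from the Euler-characteristic bookkeeping. The hypotheses that no component of $C$ or $P$ bounds a disc with interior disjoint from $C \cup P$ and that no such component is parallel to an arc of $\partial S$ are invoked repeatedly to rule out degenerate disc regions during this enumeration.
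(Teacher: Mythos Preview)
Your setup is sound up to the lower bound $F_d \geq \chi(S) + k + |\partial P|/2$, but the proof is incomplete: the claimed upper bound $F_d \leq -3\chi(S) + (3/2)\mathrm{bigon}(C,P) + |\partial P|$ is asserted, not derived, and you explicitly flag this as the ``main obstacle''. That step is the entire content of the lemma. Moreover, the subdivision by extra vertices in bigons and half-bigons is introduced but never actually used in any subsequent inequality, so its role is unclear; and the passage from bigons of $(\overline{C},P)$ to bigons of $(C,P)$, while plausible, would need a clean injectivity argument. As it stands the argument has reverse-engineered the constants needed to close the gap rather than produced them.

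The paper's proof avoids the missing double count by working with the full $C$ (not a system of representatives) and assigning to each component $R$ of $S \cut (C \cup P)$ an \emph{index}
\[
I(R) = -\chi(R) + \tfrac{1}{4}|R \cap C \cap P| + \tfrac{1}{4}|R \cap C \cap \partial S| + \tfrac{1}{4}|R \cap P \cap \partial S|.
\]
This is rigged so that $\sum_R I(R) = -\chi(S)$, bigons have index $-1/2$, half-bigons $-1/4$, parallelity and semi-parallelity regions index $0$, and every remaining region index at least $1/4$. One then bounds the number of positive-index regions by $-4\chi(S) + \mathrm{bigon}(C,P)$, and observes that each positive-index region $R$ has at most $8I(R)$ arcs of $C$ in its boundary. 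Summing and halving (each arc borders two regions) yields the stated bound directly. The index function is exactly the device that replaces your unfinished ``careful double count'': it packages the Euler-characteristic contribution and the corner count into a single additive quantity whose extremal cases are transparent.
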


\begin{proof}
First note that we may remove components of $C$ that are disjoint from $P$. Similarly, we may remove components of $P$ that are disjoint from $C$.
Define the \emph{index} of a component $R$ of $S \cut (C \cup P)$ to be 
$$I(R) = -\chi(R) + |R \cap C \cap P|/4 + |R \cap C \cap \partial S|/4 + |R \cap P \cap \partial S|/4.$$ 
Then it is an easy calculation that the sum, over all components $R$, of the index of $R$, is equal to $-\chi(S)$. We now consider the regions $R$ with negative index. Necessarily, such a region $R$ must be a disc intersecting $(C \cap P) \cup (C \cap \partial S) \cup (P \cap \partial S)$ at most three times. By our hypotheses, it is therefore a  bigon or half-bigon. A bigon has index $-1/2$ and a half-bigon has index $-1/4$. Hence,
$$\sum_{I(R) > 0} I(R) = - \chi(S) + \mathrm{bigon}(C,P)/4.$$
But each region $R$ that is not a topological parallelity or topological semi-parallelity region and is not a bigon or half-bigon has positive index at least $1/4$. (Here we are using the assumption that every component of $C$ intersects $P$ and that every component of $P$ intersects $C$. This is to rule out disc regions $R$ consisting of an arc in $\partial S$, an arc of $C$ or $P$, another arc in $\partial S$ and another arc of $C$ or $P$. It also rules out annuli and M\"obius band components with boundary disjoint from $C \cap P$.) Therefore, the number of such regions is at most $-4\chi(S) + \mathrm{bigon}(C,P)$. The number of regions with negative index is at most $\mathrm{bigon}(C,P)$. Hence, in total, the number of regions that are not parallelity or semi-parallelity regions is at most $-4 \chi(S) + 2 \mathrm{bigon}(C,P)$.

To bound the number of topological parallelism classes of components of $C \cut P$, we replace each topological parallelism class with a single component. We view the resulting 1-manifold $\alpha$ as properly embedded in $S \cut P$. The complementary regions of $\alpha$ correspond to the complementary regions of $C \cup P$ that are not topological parallelity regions. Each such region $R$ with positive index has at most $8 I(R)$ arcs of $C$ in its boundary, where the extreme case is given by a topological interpolating region with index $1/4$ and with 2 arcs of $C$ in its boundary. So, the number of arcs of $\alpha$ in the boundary of the regions with positive index is at most $8 \sum_{I(R) > 0} I(R)$, which is at most $- 8\chi(S) + 2 \mathrm{bigon}(C,P)$. The number of arcs of $\alpha$ in the boundary of the regions with negative index is at most $\mathrm{bigon}(C,P)$. The number of arcs of $\alpha$ in the boundary of regions with zero index is at most the number of topological semi-parallelity regions, which is at most $|\partial P|$. So in total, the number of arcs of $\alpha$ in the boundary of these regions is at most $- 8\chi(S) + 3 \mathrm{bigon}(C,P) + |\partial P|$. Each arc of $\alpha$ is counted twice, and hence we find the required upper bound on the number of topological parallelism classes of $C \cut P$.
\end{proof}

The following is an analogue of Lemma \ref{Lem:SelfReturningHoriz}.

\begin{lemma}
\label{Lem:SelfReturningHorizBigon}
Let $C$ and $P$ be 1-manifolds properly embedded in a compact surface $S$ that intersect transversely. Let $D$ be a bigon or half-bigon for $C$ and $P$. Let $R$ be the associated topological isotopy region. Suppose that $\partial R$ contains an arc of $C \cut P$ that is topological parallel to $D \cap C$ but not equal to $D \cap C$. Then after performing at most $-16\chi(S) + 6 \mathrm{bigon}(C,P) + 2 |\partial P|$ isotopies across maximal enlargements of topological isotopy regions containing $D$, the resulting 1-manifold $C''$ satisfies ${\mathrm{bigon}}(C'', P) < {\mathrm{bigon}}(C,P)$.
\end{lemma}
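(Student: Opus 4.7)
The plan is to mirror the proof of Lemma \ref{Lem:SelfReturningHoriz} step by step, replacing the handle-theoretic notions (parallelity handles, simplifying discs, Lemma \ref{Lem:NumberOfPairingsCurve}) with the topological analogues developed in this section (topological parallelity and semi-parallelity regions, bigons and half-bigons, Lemma \ref{Lem:NumberNonParallelRegions}). First, by Lemma \ref{Lem:NotSelfReturningBigon}, we may assume that $R$ is self-returning, since otherwise we are already done after at most $-8\chi(S) + 3\mathrm{bigon}(C,P) + |\partial P|$ isotopies, which is well within the stated bound.

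Next, I would build a nested chain of discs $D_1 \subset D_2 \subset \cdots \subset D_r$ in the manner of Lemma \ref{Lem:SelfReturningHoriz}. Take $D_1$ to be the maximal enlargement of $R$, and whenever there is an arc of $C \cut P$ topologically parallel to $D \cap C$ but not contained in $D_i$, pick one that is outermost in $S \cut P$, extend it to run topologically parallel to $\partial D_i$ until the point where it must diverge from $\partial D_i$, and enlarge $D_i$ to $D_{i+1}$ by adjoining this arc together with all arcs topologically parallel to it. Each step exhibits a distinct outermost arc in a topological parallelism class of $C \cut P$, so Lemma \ref{Lem:NumberNonParallelRegions} gives $r \leq -8\chi(S) + 3\mathrm{bigon}(C,P) + |\partial P|$.

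Performing these $r$ isotopies across maximal enlargements produces a 1-manifold $C'$. Because $R$ is self-returning, some arc of $C' \cut P$ is topologically parallel to $D \cap C$; taking one that is outermost in $S \cut P$ yields a bigon or half-bigon $D'$ for $C'$ and $P$ that contains $D$. The crux of the argument is the claim that $D'$ is not self-returning. Granting this, a second application of Lemma \ref{Lem:NotSelfReturningBigon} to $D'$ shows that after at most another $-8\chi(S) + 3\mathrm{bigon}(C,P) + |\partial P|$ isotopies across maximal enlargements, the resulting 1-manifold $C''$ satisfies $\mathrm{bigon}(C'', P) < \mathrm{bigon}(C', P) \leq \mathrm{bigon}(C,P)$, where the second inequality comes from Lemma \ref{Lem:BigonDoesNotGoUp}. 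Summing the two rounds yields the stated bound $-16\chi(S) + 6\mathrm{bigon}(C,P) + 2|\partial P|$.

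The main obstacle will be verifying the self-returning claim for $D'$, which I expect to go through exactly as in Lemma \ref{Lem:SelfReturningHoriz}. If $D'$ were self-returning with associated topological isotopy region $R'$, then $R'$ decomposes as a topological parallelity or semi-parallelity region, possibly followed by a topological interpolating region, possibly followed by another semi-parallelity region, and self-returning would force a one-to-one correspondence between the constituent regions of $R$ and $R'$ in which each region of $R'$ contains exactly one region of $R$. However, by hypothesis $\partial R$ contains an arc of $C \cut P$ topologically parallel to $D \cap C$ and distinct from it, so $R$ enters the strip between $D \cap C$ and $D' \cap C'$; consequently the first topological parallelity or semi-parallelity region of $R'$ would be forced to contain strictly more than one region of $R$, contradicting the correspondence.
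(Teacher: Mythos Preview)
Your proposal is correct and follows precisely the approach the paper intends: the paper does not give an explicit proof of this lemma, simply declaring it ``an analogue of Lemma~\ref{Lem:SelfReturningHoriz}'', and your argument is exactly that analogue, with Lemma~\ref{Lem:NumberNonParallelRegions} playing the role of Lemma~\ref{Lem:NumberOfPairingsCurve} and Lemma~\ref{Lem:NotSelfReturningBigon} playing the role of Lemma~\ref{Lem:NotSelfReturning}. One small remark: the inequality $\mathrm{bigon}(C',P) \leq \mathrm{bigon}(C,P)$ follows not directly from Lemma~\ref{Lem:BigonDoesNotGoUp} but from its consequence for maximal enlargements (the topological analogue of Lemma~\ref{Lem:MaxEnlargementIsotopySimp}), though this is immediate since an isotopy across a maximal enlargement decomposes into isotopies across isotopy regions.
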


There is also an obvious analogue of Lemma \ref{Lem:SelfReturningNotReeb}, which we omit.

Finally, we introduce the analogue of a Reeb region in this context.

\begin{definition}
Let $S$ be a compact surface containing properly embedded 1-manifolds $P$ and $C$ that intersect transversely. A \emph{topological Reeb region} for $C$ with respect to $P$ is a subsurface $A$ in the interior of $S$ with the following properties:
\begin{enumerate}
\item $A$ is an annulus or M\"obius band;
\item $A \cap P$ is a collection of arcs embedded in $A$, where each such arc is a concatenation of an arc or point in $\partial A$, a properly embedded essential arc in $A$, and another arc or point in $\partial A$;
\item $A$ contains a single bigon $D$ in its interior;
\item the intersection between $A$ and $C$ consists of a union of essential arcs in $A$, all of which are topologically parallel;
\item each such arc is a concatenation of three arcs $\alpha_-, \alpha_0, \alpha_+$;
\item $\alpha_-$ starts at a point of $\partial A \cap P$ in the same component of $A \cap P$ as $D \cap P$ and then runs topologically parallel to $\partial A$ in some direction, possibly winding several times around that component, but running at least once along that component;
\item $\alpha_0$ is topologically parallel to (or equal to) $D \cap C$;
\item $\alpha_+$ starts at a point of $\partial A \cap P$ in the same component of $A \cap P$ as $D \cap P$, and winds around $\partial A$ the same number of times as $\alpha_-$ and in the same direction.
\end{enumerate}
A \emph{topological Reeb isotopy} is supported in a topological Reeb region as above. It replaces $A \cap C$ with parallel arcs that are essential in $A$ and with interiors disjoint from $P$. A topological Reeb region $A$ is \emph{maximal} if any other topological Reeb region containing it is isotopic to $A$, via an isotopy preserving $P$ and $C$.
\end{definition}

We also have the following analogue of a semi-Reeb region.

\begin{definition}
Let $S$ be a compact surface, and let $C$ and $P$ be 1-manifolds properly embedded in $S$ that intersect transversely. A \emph{topological semi-Reeb region} for $C$ with respect to $P$ is an annular subsurface $A$ of $S$ with the following
properties:
\begin{enumerate}
\item $A$ is a regular neighbourhood of a component of $\partial S$;
\item $A \cap P$ is a collection of arcs embedded in $A$, where each such arc is a concatenation of an arc or point in $\partial A - \partial S$ and a properly embedded essential arc in $A$;
\item $A$ contains a single half-bigon $D$ and no bigon;
\item the intersection between $A$ and $C$ consists of a union of essential arcs in $A$, all of which are topologically parallel;
\item each such arcs is a concatenation of two arcs $\alpha_0, \alpha_+$;
\item $\alpha_+$ starts at a point of $(\partial A - \partial S) \cap P$ in the same component of $A \cap P$ as $D \cap P$ and then runs topologically parallel to $\partial A - \partial S$ in some direction, possibly winding several times around that component, but running at least once along that component;
\item $\alpha_0$ is topologically parallel to (or equal to) $\partial D \cut (\partial S \cup P)$.
\end{enumerate}
(See Figure \ref{Fig:TopSemiReeb}.)
A \emph{semi-Reeb isotopy} is supported in a semi-Reeb region as above. It replaces $A \cap C$ with parallel arcs that are essential in $A$ and with interiors disjoint from $P$.
A semi-Reeb region $A$ is \emph{maximal} if any other semi-Reeb region containing it is isotopic to $A$, via an isotopy preserving $P$ and $C$.
\end{definition}

\begin{figure}[h]
\centering
\includegraphics[width=0.75\textwidth]{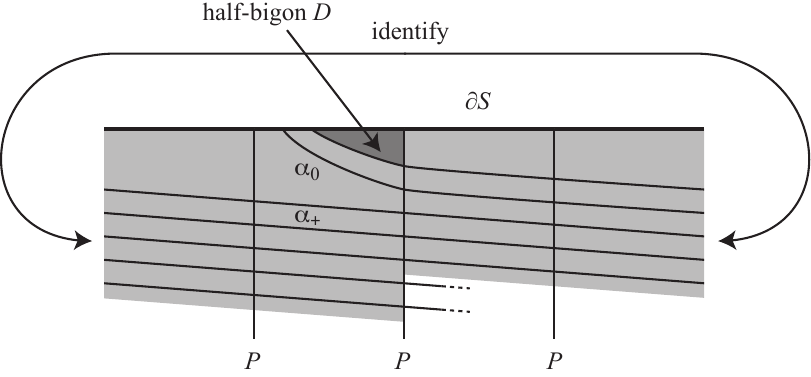}
\caption{A topological semi-Reeb region}
\label{Fig:TopSemiReeb}
\end{figure}

There are obvious analogues of Lemmas \ref{Lem:ReebConclusion} and \ref{Lem:ReebIsotopy} for topological Reeb or semi-Reeb regions.

The following lemma controls the increase in the simplification number of the 1-manifold $C$ as it is modified by the above isotopies.

\begin{lemma}
\label{Lem:IncreaseInSimp}
Let $S$ be a compact surface with a handle structure $\calH$. Let $P$ be a 1-manifold properly embedded in $S$ that is cellular with respect to the cell structure dual to $\calH$.
Let $C$ be a standard 1-manifold properly embedded in $S$. Let $C'$ be obtained from $C$ either by an isotopy across the maximal enlargement of an isotopy region associated with a bigon or half-bigon or by a Reeb or semi-Reeb isotopy associated with a bigon or half-bigon. Then $\mathrm{simp}(C') \leq \mathrm{simp}(C) + 8 ||\calH||$ and $w(C') \leq w(C) + 2 ||\calH||(|C \cap P| - |C' \cap P|)$.
\end{lemma}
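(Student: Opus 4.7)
The plan is to analyze both types of moves---isotopy across the maximal enlargement $D_+$ of a topological isotopy region $R$, and Reeb or semi-Reeb isotopy---uniformly, since in each case the move replaces a family of $k$ parallel copies of an arc of $C$ in the region of isotopy with $k$ parallel new arcs. In the first case the new arcs are parallel to $\partial R \cap P$; in the Reeb/semi-Reeb case they are essential arcs of $A$ with interiors disjoint from $P$. A first observation is that each of the $k$ old arcs has at least one point of $C \cap P$ in the interior of the region (a corner of the bigon or half-bigon, or the point $\alpha_0 \cap P$ on a Reeb profile) that is eliminated by the isotopy, while no new $C \cap P$ intersections are created inside. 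Thus $|C \cap P| - |C' \cap P| \geq k$.

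For the weight, I would first bound the weight of each individual new arc by $2||\calH||$. For an isotopy across $D_+$, the new arc runs along $\partial R \cap P$, which is a union of at most four sub-arcs of $P$ (the $P$-side of the bigon/half-bigon, and the vertical sides of the constituent $I$-bundle pieces $B_1$ and, if present, $B_2$). Because $R$ is a disc and each $I$-bundle piece is an $I$-bundle over an arc, $\partial R$ is a simple closed curve that meets each $1$-handle in a controlled way; a counting argument paralleling the one in Lemma~\ref{Lem:NumberNonParallelRegions}, combined with Lemma~\ref{Lem:Number1Cells}, shows that the number of $1$-handle crossings of the $P$-portion of $\partial R$ is at most $2||\calH||$. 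The Reeb case admits an analogous bound since $A$ is a thin regular neighbourhood of a simple closed curve. Multiplying by $k$ and using the $P$-intersection inequality above gives $w(C') - w(C) \leq 2k||\calH|| \leq 2||\calH||(|C \cap P| - |C' \cap P|)$.

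For the simp bound, a new simplifying disc for $C'$ can only appear in a $0$-handle $H_0$ traversed by a new arc. In such a handle the new arcs form one or (in a Reeb isotopy, where the new arcs may pass through $H_0$ in two directions) two new parallelism classes; a parallelism class can contribute at most two simplifying discs---one at each of its outermost ends. Under Definition~\ref{Def:SimpNumber}, where each interior-simplifying disc counts twice, this is at most four units of $\mathrm{simp}$ per parallelism class, hence at most eight units per $0$-handle in the Reeb case. Summing over the at most $|\calH^0| \leq ||\calH||$ affected $0$-handles gives the bound $\mathrm{simp}(C') \leq \mathrm{simp}(C) + 8||\calH||$.

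The main obstacle is the weight bound: showing rigorously that the $P$-portion of $\partial R$ crosses at most $2||\calH||$ $1$-handles. This requires a combinatorial analysis of how the topological parallelity bundle components sit inside the handle structure, leveraging the facts that each such component is a rectangular $I$-bundle, that $R$ is simply connected, and that distinct components of $\partial R \cap H_1$ in a given $1$-handle $H_1$ are forced to be parallel inside $H_1$ and therefore limited in number by the local combinatorics, so that their total count is controlled by $||\calH||$ via Lemma~\ref{Lem:Number1Cells}.
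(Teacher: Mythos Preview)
Your overall plan is sound and matches the paper's: reduce both moves to ``remove $k$ parallel sub-arcs of $C$, insert $k$ parallel copies of a single model arc $\alpha$'', observe $k \leq |C \cap P| - |C' \cap P|$, bound $w(\alpha)$ by $2||\calH||$, and then deduce the simp bound from the structure of $\alpha$. But you miss the one-line reason for the weight bound and this leaves a gap in both halves of the argument.

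\textbf{The weight bound.} You treat ``$\alpha$ crosses at most $2||\calH||$ $1$-handles'' as the main obstacle and propose to attack it via Lemma~\ref{Lem:NumberNonParallelRegions} and a combinatorial analysis of how the topological parallelity bundle sits in $\calH$. This is the wrong direction. The point is much simpler and uses only the hypothesis that $P$ is cellular in the cell structure dual to $\calH$: the model arc $\alpha$ is (a push-off of) a sub-arc of an arc of $P \cut C$, and $P$ lives in the $2$-handles and in co-cores of $1$-handles. To make such an arc standard, one just shifts it out of the $2$-handles and $1$-handle co-cores into the adjacent $0$-handles and $1$-handles; doing so makes the resulting arc run over each $1$-handle at most twice. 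That is the entire argument for $w(\alpha) \leq 2|\calH^1| \leq 2||\calH||$. No appeal to Lemma~\ref{Lem:NumberNonParallelRegions}, simple-connectedness of $R$, or the $I$-bundle structure of $B_1$, $B_2$ is needed.

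\textbf{The simp bound.} Your count ``at most two new parallelism classes per $0$-handle, summed over $|\calH^0|$ $0$-handles'' is not justified: the model arc $\alpha$ can traverse a single $0$-handle more than twice, since a $0$-handle can be adjacent to many $1$-handles and $\alpha$ may enter through several of them. The paper avoids this by instead bounding the \emph{total} number of components of $\alpha \cap \calH^0$, which is at most $2||\calH||$ directly from the weight bound just established. Each such component gives rise to a parallel family of new arcs of $C' \cap \calH^0$, only the two outermost of which can bound new simplifying discs; hence at most $4||\calH||$ new simplifying discs, each contributing at most $2$ to $\mathrm{simp}$, for the bound $8||\calH||$. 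Once you have the correct weight argument, this simp argument is immediate and your per-$0$-handle bookkeeping becomes unnecessary.
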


\begin{proof}
In the isotopy, sub-arcs of $C$ are replaced by arcs normally parallel to a sub-arc of an arc of $P \cut C$. Specifically, this arc of $P \cut C$ is shifted out of the 2-handles and 1-handles so that it becomes an arc $\alpha$ running through the 0-handles and 1-handles. When this arc is shifted into the 0-handles and 1-handles, it runs over each 1-handle at most twice. So the number of components of $\alpha \cap \calH^0$ is at most $2|\calH^1| \leq 2||\calH||$. 
Each component of $\alpha \cap \calH^0$ may give rise to multiple components of $(C'\cut C) \cap \calH^0$, but only the outermost two of these can be part of a simplifying disc for $C'$. Thus the number of new simplifying discs that are created is at most $4 ||\calH||$. Each simplifying disc contributes at most $2$ to $\mathrm{simp}(C')$ and so $\mathrm{simp}(C') \leq \mathrm{simp}(C) + 8 ||\calH||$.

The number of arcs of $C' \cut P$ that are inserted is at most $|P \cap C| - |P \cap C'|$, because each new arc that is inserted leads to a reduction in the number of intersections with $P$ by at least 1. As observed above, each of these new arcs has weight at most $2||\calH||$. Hence, $w(C') \leq w(C) + 2 ||\calH||(|C \cap P| - |C' \cap P|)$.
\end{proof}

\begin{proof}[Proof of Theorem \ref{Thm:MinimalPosition}] 
First we search for a maximal Reeb or semi-Reeb region. If there is one, then we perform the Reeb or semi-Reeb isotopy, creating a 1-manifold $C'$. Let $D'$ be the bigon or half-bigon for $C'$ in the Reeb or semi-Reeb region. If there was no Reeb or semi-Reeb region, then we pick any bigon or half-bigon $D'$ and set $C' = C$. We then perform the isotopy of $C'$ across the maximal enlargement of the isotopy region associated with $D'$. If this reduces the bigon number, then we return to the start of the loop. On the other hand, if the bigon number is unchanged, then we know, by Lemma \ref{Lem:BigonDoesNotGoUp}, that the new 1-manifold $C''$ has an arc of $C'' \cut P$ that runs parallel to $D' \cap C'$. An outermost such arc bounds a bigon or half-bigon. We repeat the above procedure with this disc. We continue until the bigon number decreases, at which point we return to the start of the loop. This continues until there are no more bigons or half-bigons.

The method of finding maximal Reeb and semi-Reeb regions and performing the isotopies is exactly analogous to that in the proof of Theorem \ref{Thm:Normalise1Manifold}. The argument that this procedure works is also similar, using Lemmas \ref{Lem:NotSelfReturningBigon} and \ref{Lem:SelfReturningHorizBigon} and analogues of Lemmas \ref{Lem:SelfReturningNotReeb}, \ref{Lem:ReebConclusion} and \ref{Lem:ReebIsotopy}. Note that the upper bounds on the number of isotopies in these lemmas depend on $|\partial P|$. But when $P$ is as in (2) in Theorem \ref{Thm:MinimalPosition}, $|\partial P|$ is at most $2||\calH||$ or $3|\calT|$.

The output from the above procedure is a standard curve $C''$ that is in minimal position with respect to $P$, but that is not necessarily normal. 

By Lemma \ref{Lem:IncreaseInSimp}, each of the isotopies that we have used increases the simplification number by at most $4||\calH||$. Hence, $\mathrm{simp}(C'')$ is bounded above by a polynomial function of $\mathrm{simp}(C)$ and $||\calH||$. 

We also need to control the weight of $C''$. This 1-manifold is produced from $C$ in steps, where each step is a Reeb isotopy, a semi-Reeb isotopy or an isotopy across the maximal enlargement of an isotopy region. Let $C_i$ be the 1-manifold obtained after the $i$th step. We claim that the weight of $C_i$ is at most $w(C) + 2(|C \cap P| - |C_i \cap P|)|\calT|$ or $w(C) + 2(|C \cap P| - |C_i \cap P|)||\calH||$. When $i = 0$, this is trivially true. When we have a handle structure $\calH$, then in going from $C_i$ to $C_{i+1}$, the weight increases by at most $2 ||\calH||(|C_i \cap P| - |C_{i+1} \cap P|)$, by Lemma \ref{Lem:IncreaseInSimp}. So, we have
\begin{align*}
w(C_{i+1}) &\leq w(C_i) + 2 ||\calH||(|C_i \cap P| - |C_{i+1} \cap P|) \\
&\leq w(C) + 2(|C \cap P| - |C_i \cap P|)||\calH|| + 2 ||\calH||(|C_i \cap P| - |C_{i+1} \cap P|) \\
&= w(C) + 2(|C \cap P| - |C_{i+1} \cap P|)||\calH||.
\end{align*}
This establishes the claim. Thus, $w(C'') \leq w(C) + 2|C \cap P| ||\calH|| \leq w(C)(1 + 2||\calH||)$. A similar statement is true when $S$ has a triangulation.

We can therefore apply {\sc Normalise a standard 1-manifold in a handle structure} (Theorem \ref{Thm:Normalise1Manifold}) or {\sc Normalise standard 1-manifold in a triangulation} (Theorem \ref{Thm:NormaliseStandardCurves}) to normalise $C''$ in polynomial time. Note that by Corollary \ref{Cor:RemainsMinimalPosition} below, the resulting 1-manifold $C'''$ remains in minimal position with respect to $P$. Furthermore, $w(C''') \leq w(C'')$.
\end{proof}

\begin{lemma}
\label{Lem:StaysMinimal}
Let $C$ be a standard 1-manifold properly embedded in a compact surface $S$ with a handle structure $\calH$. Let $P$ be a properly embedded 1-manifold that is cellular with respect to the cell structure dual to $\calH$. Suppose that $C$ and $P$ are in minimal position.
Let $C'$ be obtained from $C$ by isotopy across a simplifying disc and the incident 1-handle. Then $C'$ and $P$ are in minimal position.
\end{lemma}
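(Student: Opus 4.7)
The plan is to invoke isotopy invariance of the geometric intersection number, using the fact (part of Lemma \ref{Lem:MinimalNoBigon}) that $C$ and $P$ are in minimal position if and only if $|C \cap P|$ realises $i(C, P) = \min_{C'' \sim C} |C'' \cap P|$. So showing that $C'$ and $P$ are in minimal position reduces to the numerical equality $|C' \cap P| = |C \cap P|$.

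Let $H_0$ be the 0-handle containing the simplifying disc $D$ and let $H_1$ be the incident 1-handle. First I would use the cellularity of $P$ with respect to the dual cell structure: $P$ is disjoint from $H_0$, and $P \cap H_1$ is either empty or equals the co-core $\eta$ of $H_1$. The isotopy from $C$ to $C'$ is supported in $H_0 \cup H_1$, so any change in the intersection with $P$ must occur inside $H_1$. Each arc of $C \cap H_1$ (and of $C' \cap H_1$) runs parallel to the core of $H_1$ and therefore meets $\eta$ transversely in exactly one point. The isotopy across $D$ and $H_1$ destroys exactly one arc of $C \cap H_1$ when $D$ is boundary-simplifying with incident 1-handle, and exactly two arcs when $D$ is interior-simplifying, while producing no new arcs of $C \cap H_1$. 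Hence
\[
|C' \cap P| \;=\; |C \cap P| - k \qquad \text{for some } k \in \{0,1,2\},
\]
with $k = 0$ precisely when $P \cap H_1 = \emptyset$. In particular $|C' \cap P| \le |C \cap P|$.

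The move from $C$ to $C'$ is an ambient isotopy of $S$, possibly followed by discarding a simple closed curve component lying inside $H_0$; such a component bounds a disc in $S$ and is disjoint from $P$, so its removal affects neither $|C \cap P|$ nor the geometric intersection number. Consequently $i(C', P) = i(C, P)$. Combining this with $|C \cap P| = i(C, P)$ from the minimality hypothesis and with the trivial bound $|C' \cap P| \ge i(C', P)$ yields
\[
|C \cap P| \;=\; i(C, P) \;=\; i(C', P) \;\le\; |C' \cap P| \;\le\; |C \cap P|,
\]
forcing every inequality to be an equality. Therefore $|C' \cap P| = i(C', P)$, and the other direction of Lemma \ref{Lem:MinimalNoBigon} gives that $C'$ and $P$ are in minimal position.

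The only point that requires genuine care is justifying the bound $|C' \cap P| \le |C \cap P|$, i.e., that the move cannot create new intersections with $P$. This is a purely local check, relying on the product structure of the 1-handle $H_1$ and on the disjointness of $P$ from every 0-handle; once these observations are in place, the conclusion is a single application of isotopy invariance.
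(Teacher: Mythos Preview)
Your proof is correct and reaches the conclusion by a slightly different route from the paper. The paper argues directly that if $P$ met the incident 1-handle $H_1$, then the simplifying disc $D$ together with the half of $H_1$ cut off by the co-core would form a bigon or half-bigon for $C$ and $P$, contradicting minimal position; hence the isotopy is supported entirely in the complement of $P$, and minimal position is preserved tautologically. Your argument instead establishes only the inequality $|C' \cap P| \le |C \cap P|$ and then appeals to isotopy invariance of $i(\,\cdot\,, P)$ together with Lemma~\ref{Lem:MinimalNoBigon} to squeeze. Both arguments are short; the paper's yields the slightly stronger fact that the isotopy never touches $P$ (equivalently $k = 0$ necessarily), while yours avoids having to exhibit the bigon explicitly.

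Two minor remarks. First, the closed-curve component that may arise after the isotopy lies in the 0-handle on the \emph{far} side of $H_1$, not in $H_0$ (unless $H_1$ is attached to $H_0$ on both ends); this does not affect your argument, since all 0-handles are disjoint from $P$. Second, since the lemma as stated concerns only the isotopy and not any subsequent removal, your discussion of discarding that component is not strictly needed: $C'$ is genuinely isotopic to $C$, so $i(C', P) = i(C, P)$ is immediate.
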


\begin{proof}
The simplifying disc does not extend to a bigon or half-bigon of $C$ and $P$, by the assumption that they are in minimal position. Hence, the isotopy across the simplifying disc is an isotopy supported in the complement of $P$. Hence, the resulting 1-manifold $C'$ remains in minimal position with respect to $P$.
\end{proof}

\begin{corollary}
\label{Cor:RemainsMinimalPosition}
Let $S$, $C$ and $\calH$ or $\calT$ be as in Theorem \ref{Thm:Normalise1Manifold} or Theorem \ref{Thm:NormaliseStandardCurves}. Let $P$ be a 1-manifold properly embedded in $S$ that is either cellular with respect to the cell structure dual to $\calH$ or a subset of the 1-skeleton of $\calT$. Suppose that $C$ and $P$ are in minimal position. Then the output $C'$ from the algorithm {\sc Normalise a standard 1-manifold in a handle structure} or {\sc Normalise standard 1-manifold in a triangulation} is also in minimal position with respect to $P$.
\end{corollary}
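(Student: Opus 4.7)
\emph{Proof proposal.} My plan is to decompose the normalisation procedure into elementary moves and then apply \reflem{StaysMinimal} at each step. By \refrem{CompositionSmallIsotopies}, the isotopies performed within the algorithms of \refthm{Normalise1Manifold} and \refthm{NormaliseStandardCurves} — that is, the isotopies across maximal enlargements of isotopy regions and the Reeb and semi-Reeb isotopies — can each be realised as a finite composition of elementary isotopies, each pushing the current 1-manifold across a single simplifying disc together with the incident 1-handle. Thus the full normalisation procedure is a sequence of such elementary isotopies, interspersed with discards of inessential components (boundary-parallel arcs and simple closed curves bounding discs). \reflem{StaysMinimal} asserts that each elementary isotopy preserves minimal position with $P$, so by induction on the length of this sequence it suffices to verify that each discard also preserves minimal position.

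For a discard step, let $\widetilde{C}$ be the 1-manifold immediately before the discard (in minimal position with $P$ by the inductive hypothesis) and let $c \subset \widetilde{C}$ be the component to be removed. I first argue that $c \cap P = \emptyset$. If $c$ is a simple closed curve lying inside a single 0-handle $H_0$, this follows from the cellularity of $P$, which forces $P \cap H_0 = \emptyset$; the triangulation case is analogous, since $P$ is contained in the 1-skeleton of $\calT$ while $c$ lies in the interior of a face. If instead $c$ is a boundary-parallel arc cobounding a half-disc $D_c$ with an arc of $\partial S$, then an innermost-outermost argument applied to $P \cap D_c$ and $(\widetilde{C} \setminus c) \cap D_c$ shows that $c \cap P \neq \emptyset$ would produce a bigon or half-bigon for $\widetilde{C}$ and $P$, contradicting minimal position.

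Given $c \cap P = \emptyset$, I then argue that $\widetilde{C} \setminus c$ remains in minimal position with $P$. If there were a bigon or half-bigon $B$ for $\widetilde{C} \setminus c$ and $P$, then either $c$ is disjoint from $B$, in which case $B$ would also be a bigon or half-bigon for $\widetilde{C}$ and $P$, directly contradicting the inductive hypothesis, or $c \subset \mathrm{int}(B)$. In the latter case, the inessentiality of $c$ together with the disjointness from $P$ of its bounded disc or half-disc (guaranteed by the cellularity of $P$) permits a small ambient isotopy displacing $c$ out of $\mathrm{int}(B)$ without ever crossing $P$; after this, the usual bigon collapse yields an isotopy of $\widetilde{C}$ that strictly reduces $|\widetilde{C} \cap P|$, contradicting \reflem{MinimalNoBigon}. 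The main technical obstacle is justifying this auxiliary displacement of $c$: one must ensure that a path from $c$ to outside $B$ through its bounded disc/half-disc avoids $P$. Once this is handled, iterating over all elementary moves and discards shows that the final output $C'$ is in minimal position with $P$.
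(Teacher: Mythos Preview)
Your approach matches the paper's: decompose the normalisation via \refrem{CompositionSmallIsotopies} into elementary simplifying-disc isotopies and apply \reflem{StaysMinimal} to each. You are in fact more careful than the paper in treating the discard steps explicitly; one small correction there is that cellularity of $P$ does \emph{not} force the half-disc $D_c$ for a boundary-parallel arc to miss $P$ (such a $c$ need not lie in a single $0$-handle), but the discard step follows more directly anyway: by \reflem{MinimalNoBigon} minimal position is equivalent to $|\widetilde{C}\cap P|$ being the isotopy minimum, and since this minimum is additive over the components of $\widetilde{C}$, dropping a component preserves minimality without any further argument.
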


\begin{proof} As discussed in Remark \ref{Rem:CompositionSmallIsotopies}, the isotopies used in {\sc Normalise a standard 1-manifold in a handle structure} or {\sc Normalise standard 1-manifold in a triangulation} can be viewed as a composition of isotopies, each of which slides the 1-manifold across a simplifying disc and an incident 1-handle. By Lemma \ref{Lem:StaysMinimal}, each of these isotopies keeps the 1-manifold and $P$ in minimal position.
\end{proof}

\section{Geometric intersection number and the isotopy problem}
\label{Sec:GeometricIntersectionNumber}

In this section, we complete the proof of Theorems \ref{Thm:GeometricIntersectionNumber} and \ref{Thm:IsotopyProblem}.

\begin{named}{Theorem \ref{Thm:GeometricIntersectionNumber}}[{\sc Geometric intersection number}]
Let $\mathcal{T}$ be a triangulation for a compact orientable surface $S$. Let $C_1$ and $C_2$ be normal 1-manifolds properly embedded in $S$. Then there is an algorithm that provides the geometric intersection number of $C_1$ and $C_2$. The running time of this algorithm is bounded above by a polynomial function of $|\mathcal{T}|$, $\log w(C_1)$ and $\log w(C_2)$.
\end{named}

\begin{named}{Theorem \ref{Thm:IsotopyProblem}}[{\sc Isotopy of 1-manifolds}]
Let $\mathcal{T}$ be a triangulation for a compact orientable surface $S$. Let $C_1$ and $C_2$ be normal essential 1-manifolds properly embedded in $S$. Then there is an algorithm that determines whether $C_1$ and $C_2$ are isotopic. The running time of this algorithm is bounded above by a polynomial function of $|\mathcal{T}|$, $\log w(C_1)$ and $\log w(C_2)$.
\end{named}




We follow the approach of Bell \cite{Bell} and Bell and Webb \cite{BellWebb}, where they modify $\calT$ to a new triangulation $\calT'$ with the property that $C_2$ intersects each edge of $\calT'$ as few times as possible. They also keep track of how $C_1$ sits in $\calT'$. 

In order to make our results as general as possible, we also consider \emph{ideal triangulations} of a compact surface $S$. Recally that this is a representation of $S - \partial S$ as a union of triangles with their edges identified in pairs and with their vertices removed.

\begin{definition}
\label{Def:Flip}
Let $\calT$ be a triangulation or ideal triangulation of a compact surface $S$. Let $e$ be an edge of $\calT$ that is adjacent to distinct triangles. Then the operation of removing $e$, forming a square, and then inserting the other diagonal of the square, creates a new triangulation or ideal triangulation that is obtained from $\calT$ by a \emph{flip}. This is also called a \emph{2-2 Pachner move}.
\end{definition}



\begin{definition}
Let $\calT$ be a triangulation or ideal triangulation of a compact surface $S$. Let $C$ be a normal simple closed curve that intersects each edge of $\calT$ at most three times. Then, for a natural number $n$, a \emph{twist of order $n$} about $C$ is the modification to $\calT$ that results from Dehn twisting $n$ times about $C$.
\end{definition}

The following result is essentially a restatement of \cite[Theorems 7.4 and 7.5]{LackenbyYazdi}. A similar result was proved by Bell and Webb \cite[Corollary 3.5]{BellWebb}, but they did not control the running time in terms of the Euler characteristic of the surface.

\begin{theorem}[{\sc Simplify normal 1-manifold}]
\label{Thm:SimplifyNormal1Manifold}
There is an algorithm that takes as its input the following:
\begin{enumerate}
\item a 1-vertex triangulation $\mathcal{T}$ of a closed orientable surface $S$ or an ideal triangulation $\calT$ of a compact orientable surface $S$ with non-empty boundary,
\item a properly embedded connected essential normal 1-manifold $C$, 
\end{enumerate}
and outputs a sequence of 1-vertex triangulations or ideal triangulations 
$$\calT = \calT_0, \calT_1, \dots, \calT_k$$
and, for each $i$, a properly embedded 1-manifold $C_i$ that is normal in the handle structure dual to $\calT_i$, such that:
\begin{enumerate}
\item each $\calT_i$ is obtained from $\calT_{i-1}$ by a flip or a twist of order $n_i$, where $n_i$ is bounded above by a polynomial function of $w(C)$ and $|\calT|$;
\item $C_i$ is isotopic to $C_{i-1}$;
\item each component of $C_k$ intersects each edge of $\calT_k$ at most one twice;
\item the number of moves $k$ is bounded above by a polynomial function of $|\calT|$ and  $\log w(C)$.
\end{enumerate}
The running time of the algorithm is bounded above a polynomial function of $|\calT|$ and $\log w(C)$.
\end{theorem}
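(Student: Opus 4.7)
The theorem is essentially a restatement of \cite[Theorems 7.4 and 7.5]{LackenbyYazdi}, and my plan is to implement that strategy using the algorithmic infrastructure developed earlier in this paper. The core idea is an outer loop that, at each iteration, either performs a flip that reduces some local complexity of $C_i$, or performs a high-order Dehn twist that unwinds a long spiralling pattern in a single move of bounded bit-complexity.

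The main loop runs as follows. Given $(\calT_i, C_i)$, stored via a pairing system together with the vector $(C_i)$ from Section~\ref{Sec:Vector}, I first compute the edge weights $w_i(e) = |C_i \cap e|$ for every edge, which can be done in polynomial time in $|\calT|$ and $\log w(C)$ via {\sc AHT}. For each edge $e$ bounding two distinct triangles, I examine the effect of a flip across $e$ on the local weight: this can be read off the normal coordinates in the two adjacent triangles by inspecting the pairing system and counting the arcs that change arc-type. If some flip strictly reduces the total weight, I perform it and update the pairing system in the two affected triangles. Otherwise, every flip fails, and the structural analysis of \cite{LackenbyYazdi} forces $C_i$ to contain a long spiralling pattern around a short normal simple closed curve $\gamma$ that meets each edge of $\calT_i$ at most three times. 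I locate such a $\gamma$ by searching among the finitely many normal isotopy classes of curves of small weight (there are at most $\mathrm{poly}(|\calT|)$ such candidates, and for each, verifying that it is $C_i$-spiralling reduces to comparing normal coordinates); I then compute the appropriate twist exponent $n_i$ directly from the normal coordinates of $C_i$ relative to $\gamma$, using {\sc AHT} to obtain an exact integer value in $O(\log w(C))$ bits. Finally, I apply the twist of order $n_i$ about $\gamma$ and update $(C_i)$ accordingly; since a Dehn twist along $\gamma$ affects only the thin annular neighbourhood of $\gamma$, this update is again local.

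The main obstacle in the analysis is the polynomial bound on the number of moves $k$. The key monovariant is $\log w(C_i)$. A single flip may not meaningfully decrease it, but one proves that after at most $\mathrm{poly}(|\calT|)$ flips either $w(C_i)$ has dropped by a constant factor, or a spiralling configuration must have appeared along some $\gamma$; the corresponding twist of order $n_i \leq w(C)$, representable in $O(\log w(C))$ bits, then realises the constant-factor decrease in a single step. Iterating this argument, $\log w(C_i)$ decreases by a constant every $\mathrm{poly}(|\calT|)$ steps, so $k = \mathrm{poly}(|\calT|) \cdot O(\log w(C))$. The termination condition---that $C_k$ intersects each edge of $\calT_k$ at most twice---is reached when $w(C_k) = O(|\calT|)$, which happens after $O(\log(w(C)/|\calT|))$ phases, because no further simplification is possible once $C_k$ is already weight-minimising among its isotopy class in $\calT_k$. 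Each individual flip or twist is a local update whose effect on the pairing system of $C_i$ can be computed in polynomial time in $|\calT|$ and $\log w(C)$ using the algorithms of Sections~\ref{Sec:AHT} and \ref{Sec:Cut}, so the overall running time is bounded as required.
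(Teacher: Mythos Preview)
The paper does not give a proof of this theorem at all: it is introduced as ``essentially a restatement of \cite[Theorems~7.4 and~7.5]{LackenbyYazdi}'' and is used as a black box. Your proposal likewise rests on \cite{LackenbyYazdi} for every substantive step---the dichotomy ``either some flip reduces weight or a spiralling pattern around a short $\gamma$ must appear'', and the claim that a single twist then realises a constant-factor weight drop, are precisely the content of that reference rather than things you establish. So in effect you are taking the same approach as the paper (cite \cite{LackenbyYazdi}), just with an informal narrative layered on top; the narrative is plausible but does not amount to an independent proof, since phrases like ``one proves that after at most $\mathrm{poly}(|\calT|)$ flips\ldots'' are exactly where the work in \cite{LackenbyYazdi} lives.
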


\begin{remark}
In Theorems 7.4 and 7.5 of \cite{LackenbyYazdi}, the surface $S$ was required to be orientable, and so the same restriction is made here and hence in Theorems 
\ref{Thm:GeometricIntersectionNumber} and \ref{Thm:IsotopyProblem}. The requirement that $S$ was orientable was necessary for the proofs in \cite{LackenbyYazdi}, which relied on an analysis of the AHT algorithm applied to train tracks, and certain operations in the AHT algorithm can be ruled out under the assumption that $S$ is orientable. Nevertheless, we expect that Theorem \ref{Thm:SimplifyNormal1Manifold} remains true even when $S$ is non-orientable, and hence we expect that the orientability hypothesis can be dropped from Theorems \ref{Thm:GeometricIntersectionNumber} and \ref{Thm:IsotopyProblem}.
\end{remark}

\begin{proof}[Proof of Theorem \ref{Thm:GeometricIntersectionNumber}] 
We are given a triangulation $\calT$ for a compact orientable surface $S$, and normal co-ordinates for two properly embedded 1-manifolds $C_1$ and $C_2$. We wish to compute the geometric intersection number $i(C_1, C_2)$.

We may assume that $C_2$ is connected, for the following reason. Using {\sc Components of standard 1-manifold} (Theorem \ref{Thm:ComponentsCurves}), we may compute a list of components of $C_2$. It is easy to see that $i(C_1, C_2)$ is equal to the sum of $i(C_1, C_2')$ over all components $C'_2$ of $C_2$. Hence, if we can compute $i(C_1, C_2')$ for each such component, then we can compute $i(C_1, C_2)$.

We may assume that $S$ is not a sphere, disc or annulus, since geometric intersection number is straightforward to compute in these cases. Hence, 
we may reduce $\calT$ to a 1-vertex triangulation or ideal triangulation using the algorithm in Proposition 10.3 in \cite{Lackenby:Efficient} for example. We sketch the procedure now.

First suppose that $S$ is closed. We will show how to reduce the number of vertices of $\calT$ to $1$. If there is an edge with the same triangle on both sides, then one endpoint of this edge is a degree one vertex $v$. The remaining edge has distinct triangles on each side. Hence, a flip may be performed along the latter edge, which increases the degree of $v$ to $2$. Then a further flip may be performed, increasing the degree of $v$ to $3$. A 3-1 Pachner move may then be performed, which removes the three triangles around $v$ and replaces them with a single triangle. This reduces the number of vertices. Hence, we may assume that each edge has distinct triangles on each side. Now in any triangulation of the closed surface $S$, the average vertex degree is at most $\max\{ 6, 6 - 6\chi(S)\}$, and so there is always a vertex $v$ with degree at most this quantity. If $v$ is not the unique vertex of the triangulation, there is an edge emanating from $v$ that ends on a distinct vertex. If every edge emanating from $v$ has this form, then we can perform a flip along any of these edges to reduce the degree of $v$ by $1$. On the other hand, if there is some edge emanating from $v$ that ends on $v$, then we can find a triangle with two vertices equal to $v$ and one vertex not equal to $v$. If we perform a flip along the $v$-$v$ edge, then we reduce the degree of $v$ by at least $1$. So we can reduce the degree of $v$ to at most $3$, and it can then be removed. Thus, we eventually obtain a 1-vertex triangulation. The number of moves we used was at most a polynomial function of $\chi(S)$ and $|\calT|$.

A similar procedure can be used when $S$ has non-empty boundary. We can attach triangulated discs to $\partial S$ to form a triangulation of a closed surface $\overline{S}$, thereby increasing the number of triangles by at most a factor of $2$. Pick a vertex in each of these discs. We can then remove all remaining vertices using flips and 3-1 Pachner moves, just as in the above procedure. The result is a triangulation of $\overline{S}$ with one vertex in each of the new discs and no others. Now remove these vertices to form an ideal triangulation of $S$. Again, the number of moves we used was at most a polynomial function of $\chi(S)$ and $|\calT|$.

During this process, we can keep track of the 1-manifolds $C_1$ and $C_2$. In the case of flips, this is straightforward. But in the case of a 3-1 Pachner move, non-normal arcs of $C_1$ or $C_2$ may be created in the new triangle. However, these can easily be normalised using {\sc Normalise standard 1-manifold in a triangulation}. 
Each flip increases $w(C_1)$ and $w(C_2)$ by at most a factor of $2$. Hence, $\log(w(C_1))$ and $\log(w(C_2))$ increase by at most a polynomial function of $\chi(S)$ and $|\calT|$.

Denote this new 1-vertex triangulation or ideal triangulation by $\calT'$.

We now apply {\sc Simplify normal 1-manifold} from Theorem \ref{Thm:SimplifyNormal1Manifold} to $\calT'$ and the 1-manifold $C_2$. This converts $\calT'$ to a new 1-vertex or ideal triangulation $\calT''$, and it converts $C_2$ to a properly embedded normal 1-manifold that intersects each edge of $\calT''$ at most twice. (Here, we have used that $C_2$ is connected.) This triangulation $\calT''$ is obtained from $\calT'$ by a sequence of flips and twists. At each step, we can keep track of the 1-manifold $C_1$, until it is a normal 1-manifold in $\calT''$. We can subdivide $\calT''$ to form a triangulation $\calT'''$ of $S$ that contains $C_2$ as a subcomplex, and where $|\calT'''| \leq 9 |\calT''|$. 
We can also compute $C_1$ as a normal curve in this refinement. We apply {\sc Minimal position for 1-Manifolds} from Theorem \ref{Thm:MinimalPosition}, giving a 1-manifold $C_1'$ that is isotopic to $C_1$ and in minimal position with respect to $C_2$. Their number of intersection points is the required output from our algorithm.
\end{proof}

\begin{proof}[Proof of Theorem \ref{Thm:IsotopyProblem}]
Again we are given a triangulation $\calT$ for a compact orientable surface $S$, and normal co-ordinates for two properly embedded 1-manifolds $C_1$ and $C_2$. We wish to compute whether they are isotopic.

Again we may assume that $C_1$ and $C_2$ are connected, for the following reason. Using {\sc Components of standard 1-manifold} (Theorem \ref{Thm:ComponentsCurves}), we may compute a list of components of $C_1$ and $C_2$. These fall into at most $O(|\chi(S)|)$ topological parallelism classes. We can determine these parallelism classes, since two components $C'_1$ and $C''_1$ of $C_1$, say, are topologically parallel if and only if they cobound an annulus or square. We can determine this by using {\sc Cut along standard 1-manifold} (Theorem \ref{Thm:CutAlong1Manifold}) to compute the reduced handle structure on $S \cut (C'_1 \cup C''_1)$ and then determining the topological type of each component of this surface, as well as the information about which parts of the boundary came from $C'_1$ and $C''_1$. So we now take one representative $C'_1$ from a topological parallelism class of the components of $C_1$. We also run through each topological parallelism class of components $C'_2$ of $C_2$ in turn. Suppose that we can determine whether $C'_1$ and $C'_2$ are isotopic. If $C'_1$ is not isotopic to any component of $C_2$, then the algorithm terminates with a negative answer. On the other hand, if $C'_1$ is isotopic to some component $C'_2$ of $C_2$, then we determine the number of parallel copies of these components in $C_1$ and $C_2$. If these numbers are different, we again terminate with a negative answer. On the other hand, if there are the same number of components, then we remove these components from $C_1$ and $C_2$ and repeat.

So assume that $C_1$ and $C_2$ are connected.
As above, we modify $\mathcal{T}$ to a 1-vertex triangulation or ideal triangulation $\calT'$. We then apply {\sc Simplify normal 1-manifold} from Theorem \ref{Thm:SimplifyNormal1Manifold} to $\calT'$ and the 1-manifold $C_2$, and then we subdivide to give a triangulation $\calT'''$ in which $C_2$ is simplicial. We also keep track of an isotopic copy of $C_1$ in $\calT'''$. We apply {\sc Minimal position for 1-Manifolds} from Theorem \ref{Thm:MinimalPosition}, giving a 1-manifold (also called $C_1$) that is isotopic to $C_1$ and in minimal position with respect to $C_2$. If $C_1$ and $C_2$ intersect in minimal position, they are not isotopic. On the other hand, if they do not intersect, then we can use {\sc Cut along standard 1-manifold} to compute the reduced handle structure on $S \cut (C_1 \cup C_2)$ and then determine whether any component is an annulus or square interpolating between $C_1$ and $C_2$. There is such a component if and only if $C_1$ and $C_2$ are isotopic.
\end{proof}

\section{Surfaces containing a pattern}
\label{Sec:Pattern}

In this section, we will consider a closed surface $S$ containing a pattern $P$, which is defined as follows.

\begin{definition}
A \emph{pattern} $P$ in a closed surface $S$ is a disjoint union of simple closed curves and embedded trivalent graphs. 
\end{definition}

The reason for considering such a situation is that the 3-manifolds appearing in a hierarchy naturally inherit a pattern in their boundary \cite{HakenHomeomorphism, Matveev}. The analysis that we present here will be useful in future work of the author on hierarchies.



We now generalise many of the definitions from earlier in the paper to the setting of patterns.

We will consider 1-manifolds $C$ properly embedded in a closed surface $S$ with a pattern $P$. After a small isotopy of $C$, we may assume that $C$ is disjoint from the vertices of $P$ and intersects its edges and simple closed curves transversely. Henceforth, we will always assume this to be the case.

\begin{definition}
Let $C$ be a properly embedded 1-manifold in the closed surface $S$, and let $P$ be a pattern in $S$. Then a \emph{bigon disc} is a disc $D$ embedded in $S$ such that $\partial D$ is the concatenation of arcs $\alpha$ and $\beta$ where $\alpha = D \cap C$ and $\beta = D \cap P$ is disjoint from the vertices of $P$. 
A \emph{trigon disc} is a disc $D$ embedded in $S$ such that $\partial D$ is the concatenation of arcs $\alpha$ and $\beta$, where $\alpha = D \cap C$ and $\beta = D \cap P$ intersects a single vertex of $P$ in its interior. (See Figure \ref{Fig:IsotopyRegionTrigon}.)
\end{definition}

\begin{definition}
We say that $C$ and $P$ are in \emph{locally minimal position} if they have no bigon or trigon disc.
\end{definition}



By analogy with the simplification number and bigon number, we present the following definition.

\begin{definition}
The \emph{bi-trigon number} $\text{bi-trigon}(C, P)$ of $C$ and $P$ is the number of trigon discs and plus twice the number of bigon discs.
\end{definition}

We now consider what the right analogue of an isotopy region should be. 

\begin{definition}
\label{Def:IsotopyRegionTrigon}
Let $D$ be a bigon or trigon for $C$ and $P$. Then the \emph{associated isotopy region} is a disc $R$ embedded in $S$ that is the union of $D$ and a subset of the form $I \times [-1,1]$ for a closed bounded interval $I$, with the following properties:
\begin{enumerate}
\item the intersection between $D$ and $I \times [-1,1]$ is the arc $\partial D \cut C$; it is also a component of $\partial I \times [-1,1]$;
\item the intersection between $C$ and $I \times [-1,1]$ is $I \times \{ -1,1 \}$;
\item the intersection between the vertices of $P$ and $I \times [-1,1]$ lies on $I \times \{ 0 \}$;
\item each component of intersection between $I \times [-1,1]$ and an edge of $P$ is of the form $I' \times \{ 0 \}$ for a closed bounded interval $I' \subseteq I$, or $\{ \ast \} \times [-1,0] $, or $\{ \ast \} \times [0,1] $ or $\{ \ast \}  \times [-1,1]$, for a point $\ast \in I$, or possibly a union of such arcs.
\end{enumerate}
Furthermore, we require that $I \times [-1,1]$ is maximal with these properties, up to isotopy preserving $P$ and $C$. (See Figure \ref{Fig:IsotopyRegionTrigon}.) Let $C'$ be the 1-manifold obtained from $C$ by isotoping $C$ across $R$.
We say that $D$ and $R$ are \emph{self-returning} if the component of $C' \cut P$ containing $\partial R \cut C$ is topologically parallel to $D \cap C$ and all the arcs of $C \cut P$ between them are also parallel to $D \cap C$.
\end{definition}

\begin{figure}[h]
\centering
\includegraphics[width=0.6\textwidth]{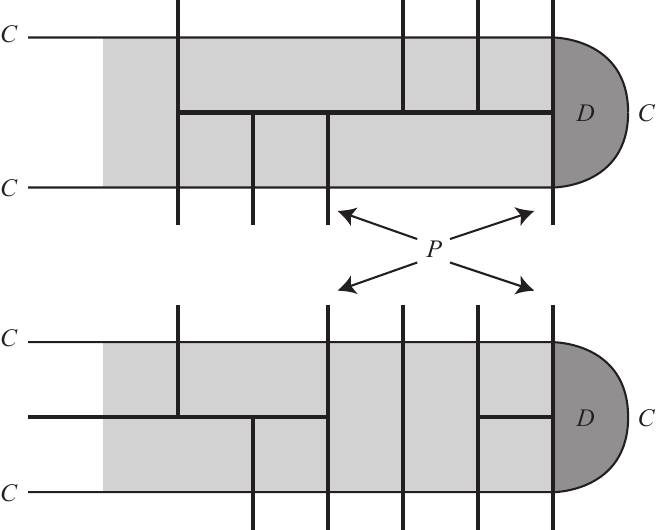}
\caption{Two possibilities for the isotopy region associated with the trigon $D$.}
\label{Fig:IsotopyRegionTrigon}
\end{figure}

The main property of this definition is that isotopy regions are extendable, in the following sense. Suppose that $R$ is a region satisfying the above properties, but without the condition of maximality. Suppose that we were to isotope $C$ across $R$ creating a 1-manifold $C'$. Suppose that $C'$ has a bigon or trigon disc $D'$ that intersects $C' \cut C$ and with interior disjoint from $R$ and $C$. Then a small regular neighbourhood of $R \cup D'$ in $S \cut C$ would also satisfy the definition above (possibly without maximality). (See Figure \ref{Fig:ExtendTrigonIsotopyExtend}.) Hence, we deduce that when $R$ is indeed maximal, then there is no such bigon or trigon disc $D'$. This is a key step in the proof of the following result, which is an analogue of Lemmas \ref{Lem:SimpDoesNotGoUp} and \ref{Lem:BigonDoesNotGoUp}.

\begin{figure}[h]
\centering
\includegraphics[width=0.65\textwidth]{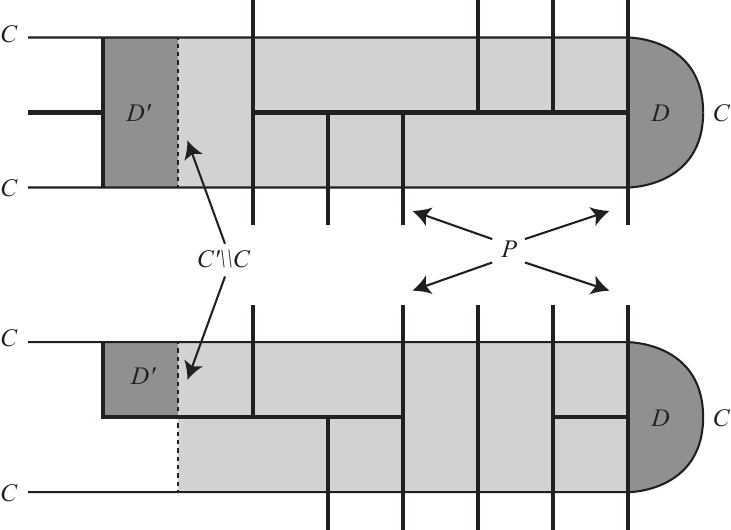}
\caption{Shown are two regions $R$ that satisfy the conditions of an isotopy region, but possibly without maximality. Shown also is $C' \cut C$, where $C'$ is obtained by isotoping $C$ across $R$. A bigon or trigon $D'$ for $C'$ is also shown. In each case, a small regular neighbourhood of $R \cup D'$ in $S \cut C$ also satisfies the definition of an isotopy region, possibly without maximality.}
\label{Fig:ExtendTrigonIsotopyExtend}
\end{figure}

\begin{lemma}
\label{Lem:BiTrigonDoesNotGoUp}
Let $C$ be a 1-manifold properly embedded in a closed surface $S$ containing a pattern $P$.
Let $D$ be a bigon or trigon for $C$ and $P$.
Let $C'$ be the 1-manifold obtained by isotoping $C$ across the isotopy region associated with $D$. Then $\mathrm{bi}\text{-}\mathrm{trigon}(C', P) \leq \mathrm{bi}\text{-}\mathrm{trigon} (C,P)$ and $|C' \cap P| < |C \cap P|$. Furthermore, if $\mathrm{bi}\text{-}\mathrm{trigon}(C',P) =  \mathrm{bi}\text{-}\mathrm{trigon}(C,P)$, then there is an arc of $C' \cut P$ that is topologically parallel to $D \cap C$, and all the components of $C \cut P$ between these two arcs are topologically parallel to $D \cap C$.
\end{lemma}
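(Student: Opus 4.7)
The plan is to mimic the proof of Lemma~\ref{Lem:SimpDoesNotGoUp} almost verbatim, with bigons and trigons playing the roles of interior- and boundary-simplifying discs respectively. I would define
$$f \colon \{\text{bigons and trigons for }C'\} \longrightarrow \mathbb{P}\{\text{bigons and trigons for }C\},$$
sending $D'$ to the set of bigons and trigons for $C$ that lie inside $D'$. I would then establish the four properties:
\begin{enumerate}
\item $f(D')\neq\emptyset$ for every bigon or trigon $D'$ for $C'$;
\item $f(D'_1)\cap f(D'_2)=\emptyset$ for distinct $D'_1,D'_2$;
\item if $D'$ is a bigon, then the total weight of $f(D')$ is at least $2$, where a bigon contributes weight $2$ and a trigon contributes weight $1$;
\item if $|f(D')|=1$ and the single element $D_1$ has the same type as $D'$, then $C\cap\mathrm{int}(D')$ is a (possibly empty) collection of arcs, all topologically parallel to $D'\cap C'$.
\end{enumerate}
Given these four properties, both the inequality $\mathrm{bi}\text{-}\mathrm{trigon}(C',P)\le \mathrm{bi}\text{-}\mathrm{trigon}(C,P)$ and the equality clause follow by the same summation argument as in Lemma~\ref{Lem:SimpDoesNotGoUp}: the sets $f(D')$ for distinct $D'$ are pairwise disjoint by (2), each $f(D')$ contributes at least as much weight to $C$ as $D'$ does to $C'$ by (1) and (3), and if equality holds throughout then the distinguished disc $D$ must lie inside some $D'$ with $|f(D')|=1$ of the same type, in which case (4) produces the required family of topologically parallel arcs.

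Properties (2) and (4) are routine. Property (2) is immediate because distinct sub-discs are disjoint. For (4), if $C\cap\mathrm{int}(D')$ contained an arc that is not topologically parallel to $D'\cap C'$, then the collection $(C\cup P)\cap D'$ would cut off at least two disc regions of $D'$ each disjoint from $\partial D'\cap C'$, and each such region would itself be a bigon or trigon for $C$; this would force $|f(D')|\ge 2$, contradicting the hypothesis. Property (3) requires a short case analysis on the outermost sub-disc of $D'$ whose boundary contains all of $\beta'=\partial D'\cap P$: if this sub-disc has no vertex of $P$ on its boundary, then it is a bigon for $C$ of weight $2$; if it contains one, it is a trigon, and then the third edge of $P$ at that vertex enters $\mathrm{int}(D')$ and, together with $C\cap D'$, produces a second disc of $f(D')$.

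The crux is property (1), exactly as in Lemma~\ref{Lem:SimpDoesNotGoUp}. If $\partial D'\cap C'$ lies entirely in $C$, then either $D'$ itself is an element of $f(D')$, or $C$ meets $\mathrm{int}(D')$ and an outermost arc of $C\cap D'$ cuts off a bigon or trigon for $C$. The difficult subcase is when $\partial D'\cap C'$ contains a portion of $C'-C$, so that $D'$ meets the isotopy region $R$. Let $R_0$ be the component of $R$ adjacent to that portion of $C'-C$. If $\mathrm{int}(R_0)\cap D'=\emptyset$, then a small regular neighbourhood of $R\cup D'$ would itself satisfy the conditions of Definition~\ref{Def:IsotopyRegionTrigon}, contradicting the maximality condition built into that definition (made explicit in the extendability discussion immediately preceding the lemma statement). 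Hence $R_0\subset D'$, and $R_0$ cuts $D'$ into sub-discs, each of which is a bigon or trigon for $C$, supplying the required members of $f(D')$. Finally, $|C'\cap P|<|C\cap P|$ follows directly from the construction of the isotopy across $R$, which removes the intersections in $\partial D\cap P$ and creates none.

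The main obstacle, as in the analogous lemma, is property (1) in the case where $\partial D'$ meets $C'-C$. One must verify, through a careful case analysis on the type (bigon or trigon) of $D'$, on whether the arc $C'-C$ touches $\partial D'$ on one or both sides, and on the location of any vertices of $P$ sitting in $\mathrm{int}(D')$, that the maximality clause of Definition~\ref{Def:IsotopyRegionTrigon} genuinely forbids $R_0$ from sitting outside $D'$. This is the analogue of the interpolating-handle phenomenon that complicated Lemma~\ref{Lem:SimpDoesNotGoUp}, and it is where the extendability remark following Definition~\ref{Def:IsotopyRegionTrigon} does the real work.
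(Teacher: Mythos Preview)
Your proposal is essentially correct and follows the same architecture as the paper's proof: define $f$ exactly as you do, establish the four properties, and deduce the inequality and the equality clause by the weighting argument from Lemma~\ref{Lem:SimpDoesNotGoUp}.

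Two small points where you deviate from the paper and make life harder for yourself. First, your property~(3) is stated as a weight bound and then argued by a case analysis that contemplates a vertex of $P$ appearing on $\beta'$; but if $D'$ is a bigon then by definition $D'\cap P=\beta'$ is disjoint from the vertices of $P$, so that case is vacuous. The paper's version of~(3) is simply ``if $D'$ is a bigon then $f(D')$ consists of bigons'', which is immediate for this reason and, combined with~(1), gives your weight bound for free. Second, your use of ``the component $R_0$ of $R$'' is a vestige of the handle-structure setting of Lemma~\ref{Lem:SimpDoesNotGoUp}; here $R$ is a single disc and there is no natural decomposition into components. The paper argues directly with $R$: either the interior of $R$ is disjoint from the interior of $D'$, in which case the extendability observation preceding the lemma contradicts maximality of $R$, or $R$ enters $\mathrm{int}(D')$ along $\partial R\setminus C$, in which case $D'$ contains a bigon or trigon for $C$. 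Your argument is the same in substance once you drop the $R_0$ notation.
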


\begin{proof}
Just as in the proof of Lemma \ref{Lem:SimpDoesNotGoUp}, we define a function 
$$f \colon \{ \text{bigons and trigons for } C' \} \rightarrow \mathbb{P} \{ \text{bigons and trigons for } C \},$$
by declaring that for each bigon or trigon $D'$ for $C'$, $f(D')$ is the set of bigon and trigon discs for $C$ that lie in $D'$. There are obvious analogues of Properties (1) - (4) in the proof of Lemma \ref{Lem:SimpDoesNotGoUp} which also hold here:
\begin{enumerate}
\item $f(D') \not= \emptyset$ for every bigon or trigon disc $D'$ for $C'$;
\item $f(D'_1) \cap f(D'_2) = \emptyset$ for distinct bigon or trigon discs $D'_1$ and $D'_2$ for $C'$;
\item if $D'$ is a bigon disc for $C'$, then $f(D')$ consists of bigon discs;
\item if $f(D') = \{ D_1 \}$ for a single disc $D_1$ that is a bigon exactly when $D'$ is a bigon, then $C\cap \mathrm{int}(D')$ consists of a (possibly empty) collection of parallel arcs, all parallel to $D' \cap C'$.
\end{enumerate}
They quickly imply the lemma. 

The only property that is not immediate is (1): $f(D') \not= \emptyset$ for every bigon or trigon disc $D'$ for $C'$. We now sketch how this is proved.
For a bigon or trigon disc $D'$ for $C'$ that is also a bigon or trigon disc for $C$, this is immediate. So suppose that $D'$ is a bigon or trigon for $C'$ that is not a bigon or trigon for $C$. Suppose also, for contradiction, that $D'$ does not contain a bigon or trigon of $C$. Let $\alpha'$ be the arc $\partial D' \cap C'$. If $\alpha'$ is a subset of $C$, then it must have been the case that the interior of $D'$ contained an arc of intersection with $C$, and an outermost such arc gives a bigon or trigon for $C$, which we are assuming is not the case. So suppose that $\alpha'$ is not a subset of $C$. It must then be incident to the arc $\partial R \cut C$. If the interior of $R$ is disjoint from the interior of $D'$, then, as observed above, a small regular neighbourhood of $R \cup D'$ in $S \cut C$ would also satisfy the definition of an isotopy region for $C$ (possibly apart from maximality), contradicting the fact that $R$ is maximal. Hence, $R$ must enter the interior of $D'$ at the arc $\partial R \cut C$. But this then implies that $D'$ contains a bigon or trigon of $C$. \end{proof}

\begin{definition}
Let $D$ be a bigon or trigon disc in the closed surface $S$, and let $R$ be the associated isotopy region. Then an \emph{enlargement} of $D$ is a disc $D_+$ in $S$ such that 
\begin{enumerate}
\item $D_+$ contains $R$;
\item $D_+ \cap C$ consists of arcs topologically parallel to $R \cap C$;
\item one of these arcs is $\partial D_+ \cap C$;
\item the remainder of $\partial D_+$ contains $\partial R \cut C$ and has no points of intersection with $P$ other than $(\partial R \cut C) \cap P$.
\end{enumerate}
The \emph{maximal enlargement} of $D$ is an enlargment that is not contained within a bigger enlargement, up to ambient isotopy preserving $P$ and $C$. The \emph{associated isotopy} of $C$ removes the arcs $D_+ \cap C$ and replaces them with parallel copies of $\partial R \cut C$.
\end{definition}

The following is an analogue of Lemma \ref{Lem:MaxEnlargementIsotopySimp}.

\begin{lemma}
\label{Lem:MaxEnlargementIsotopyTrigon}
Let $D$ be a bigon or trigon for $C$ and $P$. Let $C'$ be the 1-manifold that is obtained from $C$ by an isotopy across a maximal enlargement of the isotopy region associated with $D$. Then $\mathrm{bi}\text{-}\mathrm{trigon}(C', P) \leq \mathrm{bi}\text{-}\mathrm{trigon}(C, P)$.
\end{lemma}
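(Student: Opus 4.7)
The plan is to prove the lemma by induction on $k$, the number of arcs in $D_+ \cap C$, using Lemma \ref{Lem:BiTrigonDoesNotGoUp} at each step. This mirrors the one-line argument for Lemma \ref{Lem:MaxEnlargementIsotopySimp} in the simplifying-disc setting: realize the isotopy across the maximal enlargement $D_+$ as an iterated composition of isotopies across isotopy regions associated with bigons or trigons of intermediate 1-manifolds.

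Order the arcs of $D_+ \cap C$ as $\alpha_1, \ldots, \alpha_k$, with $\alpha_k = D \cap C$ adjacent to $D$ and $\alpha_1 = \partial D_+ \cap C$ on the outer boundary of $D_+$. All of these arcs are topologically parallel to $R \cap C$ in $D_+$, so $D_+$ decomposes, from $R$ outward, as a nested sequence of slices, each a union of parallelity, semi-parallelity, and (when $D$ is a trigon) interpolating regions. In the base case $k=1$, up to an isotopy preserving $C$ and $P$ one has $D_+ = R$, so the isotopy across $D_+$ is precisely the isotopy across $R$, and Lemma \ref{Lem:BiTrigonDoesNotGoUp} yields $\mathrm{bi}\text{-}\mathrm{trigon}(C', P) \leq \mathrm{bi}\text{-}\mathrm{trigon}(C, P)$.

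For the inductive step, first I would isotope $C$ across $R$, producing a 1-manifold $C_1$ with $\mathrm{bi}\text{-}\mathrm{trigon}(C_1, P) \leq \mathrm{bi}\text{-}\mathrm{trigon}(C, P)$ by Lemma \ref{Lem:BiTrigonDoesNotGoUp}. In $C_1$, the next parallel arc $\alpha_{k-1}$, together with the adjacent sub-arc of $P$ just beyond $R$ inside $D_+$, bounds a new disc $D^{(1)}$ which is a bigon if $D$ is a bigon and a trigon if $D$ is a trigon. Its associated isotopy region $R^{(1)}$ lies in the closure $D_+^{(1)}$ of $D_+ \setminus R$, and $D_+^{(1)}$ serves as an enlargement of $R^{(1)}$ containing the $k-1$ arcs $\alpha_1, \ldots, \alpha_{k-1}$ of $D_+ \cap C_1$. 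By the inductive hypothesis applied to $(C_1, D^{(1)}, D_+^{(1)})$, the isotopy of $C_1$ across $D_+^{(1)}$ does not increase the bi-trigon number. Composing this with the isotopy across $R$ gives exactly the isotopy of $C$ across $D_+$, and stringing the two inequalities together yields $\mathrm{bi}\text{-}\mathrm{trigon}(C', P) \leq \mathrm{bi}\text{-}\mathrm{trigon}(C, P)$.

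The main technical point is verifying that $D^{(1)}$ really is a bigon or trigon for $C_1$ and $P$, that $D_+^{(1)}$ really is an enlargement of $R^{(1)}$ in the sense preceding the lemma statement, and that $D_+^{(1)}$ retains the maximality required to apply the inductive hypothesis. Both properties follow from the parallel product structure of $D_+$: the arcs $\alpha_1, \ldots, \alpha_k$ are mutually topologically parallel in $D_+$, so the slice $D_+^{(1)}$ inherits the same local decomposition (parallelity, semi-parallelity, interpolating regions, and the appropriate intersection pattern with $P$) as $D_+$ itself, simply shifted by one parallel arc. The maximality of $D_+^{(1)}$ as an enlargement is inherited from that of $D_+$: any strict enlargement of $D_+^{(1)}$ in $C_1$ could be pulled back through the inverse of the isotopy across $R$ to yield a strict enlargement of $D_+$ in $C$, contradicting the hypothesis that $D_+$ is already maximal.
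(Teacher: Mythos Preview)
Your approach is correct and matches the paper's: express the isotopy across $D_+$ as a composition of isotopies across isotopy regions and apply Lemma~\ref{Lem:BiTrigonDoesNotGoUp} at each step, exactly as the paper does for Lemma~\ref{Lem:MaxEnlargementIsotopySimp}. One simplification: you do not need to verify that $D_+^{(1)}$ is \emph{maximal}; the inductive hypothesis works for any enlargement (the statement and proof never use maximality), so you can drop that final paragraph entirely, or simply iterate Lemma~\ref{Lem:BiTrigonDoesNotGoUp} $k$ times without an induction.
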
 

The following is analogue of Lemma \ref{Lem:NotSelfReturning}, with essentially the same proof.

\begin{lemma}
\label{Lem:TrigonNotSelfReturning}
Let $D$ be a bigon or trigon for $C$ and $P$ that is not self-returning. Let $e(P)$ be the number of edges of $P$. Then after performing $O(|\chi(S)| + e(P) + \mathrm{bi}\text{-}\mathrm{trigon}(C, P))$ isotopies across maximal enlargements of isotopy regions containing $D$, the result is a 1-manifold $C'$ with $\mathrm{bi}\text{-}\mathrm{trigon}(C', P) < \mathrm{bi}\text{-}\mathrm{trigon}(C, P)$.
\end{lemma}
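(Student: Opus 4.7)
The plan is to mirror the proof of Lemma \ref{Lem:NotSelfReturning} almost verbatim, with normal parallelism of arcs of $C \cap \calH^0$ replaced by topological parallelism of arcs of $C \cut P$, simplifying discs replaced by bigon and trigon discs for $(C, P)$, and Lemma \ref{Lem:SimpDoesNotGoUp} replaced by Lemma \ref{Lem:BiTrigonDoesNotGoUp}. Let $R$ be the isotopy region associated with $D$ and let $D_1$ be its maximal enlargement. If every arc of $C \cut P$ topologically parallel to $D \cap C$ is already contained in $D_1$, then the isotopy across $D_1$ removes all of them; because $D$ is not self-returning, the resulting 1-manifold $C'$ has no arcs of $C' \cut P$ parallel to $D \cap C$, and Lemma \ref{Lem:BiTrigonDoesNotGoUp} gives $\mathrm{bi}\text{-}\mathrm{trigon}(C',P) < \mathrm{bi}\text{-}\mathrm{trigon}(C,P)$.

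Otherwise, pick the outermost arc $\beta_1$ of $C \cut P$ parallel to $D \cap C$ that is not in $D_1$, extend it maximally running alongside $\partial D_1$ until it must diverge, and enlarge $D_1$ to a disc $D_2$ that contains $\beta_1$ together with its entire topological parallelism class. Iterate, producing a nested chain $D_1 \subset D_2 \subset \cdots \subset D_r$ of enlargements, each absorbing a new topological parallelism class, and perform the resulting isotopy. Exactly as in Lemma \ref{Lem:NotSelfReturning}, each enlargement step produces at least one new arc of $C \cut P$ on $\partial D_i$ that is outermost in a fresh topological parallelism class, so $r$ is at most twice the number of topological parallelism classes of $C \cut P$ that are not parallel into $\partial S = \emptyset$.

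The main obstacle, and the only new content beyond a direct translation, is the bound on the number of topological parallelism classes of $C \cut P$ by $O(|\chi(S)| + e(P) + \mathrm{bi}\text{-}\mathrm{trigon}(C,P))$; this is the pattern-theoretic analogue of Lemma \ref{Lem:NumberNonParallelRegions}. The plan here is to discard components of $C$ disjoint from $P$ and edges of $P$ disjoint from $C$, and then assign to each complementary region $R$ of $S \cut (C \cup P)$ the index
\[
I(R) = -\chi(R) + \tfrac{1}{4}\,|\partial R \cap (C \cap P)| + \tfrac{1}{3}\,|\partial R \cap V(P)|,
\]
where $V(P)$ is the set of vertices of $P$, and the coefficients are chosen, using trivalency $3|V(P)| = 2e(P)$, so that the corner contributions at each $0$-cell of $C \cup P$ sum to $1$. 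A standard Euler characteristic calculation then yields $\sum_R I(R) = -\chi(S) + \tfrac{1}{3}\,e(P)$. A short case analysis of discs with $-1 + \tfrac{1}{4}c_R + \tfrac{1}{3}v_R \leq 0$ shows that the only regions with strictly negative index that survive the essentiality hypotheses on $C$ and $P$ are bigons (index $-1/2$) and trigons (index $-1/6$); hence the total negative contribution is at most $\tfrac{1}{2}\,\mathrm{bi}\text{-}\mathrm{trigon}(C,P)$. Every non-parallelity region with positive index has index bounded below by $1/12$, so the total number of complementary regions that are not topological parallelity or semi-parallelity regions is $O(|\chi(S)| + e(P) + \mathrm{bi}\text{-}\mathrm{trigon}(C,P))$. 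Since each topological parallelism class of $C \cut P$ is adjacent to at most two such regions, the required bound on $r$ follows, and with it the lemma.
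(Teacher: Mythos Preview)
Your approach is exactly the one the paper intends: it explicitly says that Lemma~\ref{Lem:TrigonNotSelfReturning} is the analogue of Lemma~\ref{Lem:NotSelfReturning} ``with essentially the same proof'', and your index formula $I(R) = -\chi(R) + \tfrac14 c_R + \tfrac13 v_R$ is the right adaptation of Lemma~\ref{Lem:NumberNonParallelRegions} to trivalent patterns. The computation $\sum_R I(R) = -\chi(S) + \tfrac13 e(P)$ and the identification of bigons and trigons as the negative-index regions are correct.

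However, your final counting step is stated backwards. You write that ``each topological parallelism class of $C \cut P$ is adjacent to at most two such regions'', and conclude the bound on $r$. But that inequality goes the wrong way: a single non-parallelity region can be adjacent to many parallelism classes, so bounding the number of regions does not by itself bound the number of classes. What you need, as in the proof of Lemma~\ref{Lem:NumberNonParallelRegions}, is a bound on the total number of $C$-arcs in the boundaries of non-parallelity regions. For a disc region with index $I(R) > 0$ one has $c_R/2 \le O(I(R))$ (since $c_R$ is even and $I(R) \ge 1/12$ forces $c_R/2 \le 26\,I(R)$, say), and then summing over regions and using that each parallelism class contributes two outermost arcs to this count gives the bound. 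You should also note that regions with $c_R = 0$ (bounded entirely by $P$) can have negative index but are at most $O(|\chi(S)| + e(P))$ in number, so they do not disturb the big-$O$ estimate; your appeal to unstated ``essentiality hypotheses on $P$'' is not quite enough to exclude them.
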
  

There are also analogues of Lemmas \ref{Lem:SelfReturningHoriz} and \ref {Lem:SelfReturningNotReeb}. In the case where a trigon or bigon disc $D$ is self-returning and where its isotopy region $R$ has no arc of $(R \cap C) \cut P$ parallel to $D \cap C$, we have the following analogue of a Reeb region.

\begin{definition}
Let $S$ be a closed surface containing a pattern $P$. Let $C$ be a 1-manifold properly embedded in $S$. A \emph{generalised Reeb region} is a subsurface $A$ of $S$ homeomorphic to an annulus or M\"obius band  $S^1 \widetilde{\times} [-1,1]$ and with the following
properties:
\begin{enumerate}
\item the intersection between the vertices of $P$ and $S^1 \widetilde{\times} [-1,1]$ lies on $S^1 \widetilde{\times} \{ 0 \}$;
\item each component of intersection between $S^1 \widetilde{\times} [-1,1]$ and an edge of $P$ is of the form $I' \widetilde{\times} \{ 0 \}$ for a closed bounded interval $I' \subseteq S^1$, or $\{ \ast \} \widetilde{\times} [-1,0] $, or $\{ \ast \} \widetilde{\times} [0,1] $ or $\{ \ast \}  \widetilde{\times} [-1,1]$, for a point $\ast \in S^1$, or possibly a union of such arcs.
\item $A$ contains a bigon or trigon $D$ in its interior;
\item the intersection between $A$ and $C$ consists of a union of essential arcs in $A$, all of which are topologically parallel;
\item each such arcs is a concatenation of three arcs $\alpha_-, \alpha_0, \alpha_+$;
\item $\alpha_-$ starts at a point of $\partial A$ and then runs parallel to $\partial A$ in some direction, possibly winding several times around that component but winding at least once around that component;
\item $\alpha_0$ is topologically parallel to $D \cap C$;
\item $\alpha_+$ starts at a point of $\partial A$ opposite $\partial A \cap \alpha_-$, and winds around $\partial A$ the same number of times as $\alpha_-$ and in the same direction.
\end{enumerate}
A \emph{generalised Reeb isotopy} is supported in a generalised Reeb region as above. It replaces $A \cap C$ with essential arcs in $A$ of the form $\{ \ast \} \widetilde{\times} [-1,1]$ for a point $\ast \in S^1$. A Reeb region $A$ is \emph{maximal} if any other Reeb region containing it is isotopic to $A$ via an isotopy preserving $C$ and $P$ throughout. (See Figure \ref{Fig:GenerlisedIsotopyRegionTrigon}.)
\end{definition}

\begin{figure}[h]
\centering
\includegraphics[width=0.6\textwidth]{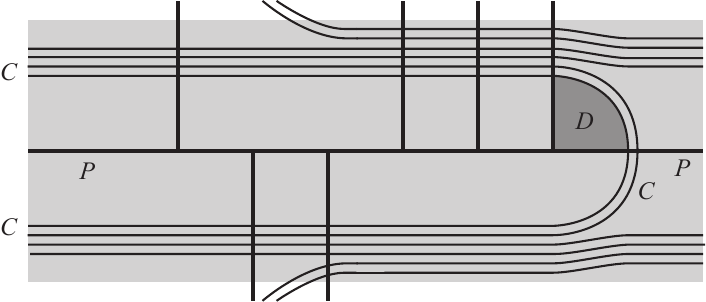}
\caption{A generalised Reeb region associated with a trigon $D$.}
\label{Fig:GenerlisedIsotopyRegionTrigon}
\end{figure}

\begin{theorem}[{\sc Locally minimal position for 1-manifold and pattern}]
\label{Thm:MinimalPositionPattern}
There is an algorithm that takes, as its input, the following:
\begin{enumerate}
\item a triangulation $\calT$ or handle structure $\calH$ of a closed surface $S$;
\item a pattern $P$ properly embedded in $S$ that either is a subset of the 1-skeleton of $\calT$ or that intersects $\calH$ in the following way: it is disjoint from 0-handles; it intersects each 1-handle in the empty set or in the co-core of the handle; it intersects each 2-handle in the empty set, or in a properly embedded arc, or in a graph that consists of a single vertex of $P$ with three arcs running to the boundary of the 2-handle;
\item a standard 1-manifold $C$;
\end{enumerate}
and provides as its output a normal 1-manifold $\overline{C}$ that is obtained from $C$ by an isotopy, plus possibly removing some components that bound discs. This 1-manifold $\overline{C}$ is in locally minimal position with respect to $P$ and has weight that is bounded above by $w(C)(1 + 2 |\calT|)$ or $w(C)(1 + 2 ||\calH||)$. The running time is bounded above by a polynomial function of $|\calT|$ or $||\calH||$, $\mathrm{simp}(C)$ and $\log w(C)$.
\end{theorem}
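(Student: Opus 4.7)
The plan is to run essentially the same loop as in the proof of Theorem~\ref{Thm:MinimalPosition} (\textsc{Minimal Position for 1-Manifolds}), but with bigon and trigon discs playing the role of bigons and half-bigons, the bi-trigon number replacing the bigon number, and the generalised Reeb regions of the current section replacing the topological Reeb/semi-Reeb regions. Most of the required technical infrastructure has already been set up: isotopy regions for bigons/trigons (Definition~\ref{Def:IsotopyRegionTrigon}), Lemma~\ref{Lem:BiTrigonDoesNotGoUp} controlling the effect of an isotopy across such a region, Lemma~\ref{Lem:MaxEnlargementIsotopyTrigon} for maximal enlargements, Lemma~\ref{Lem:TrigonNotSelfReturning} for the non-self-returning case, and generalised Reeb regions. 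The remaining ingredients are (i) obvious analogues of Lemmas~\ref{Lem:SelfReturningHorizBigon}, \ref{Lem:SelfReturningNotReeb}, \ref{Lem:ReebConclusion} and \ref{Lem:ReebIsotopy} for the generalised Reeb setting, proved exactly as before once the trivalent vertices of $P$ have been accounted for in the isotopy-region analysis; and (ii) the algorithmic machinery to detect and perform these moves, which will mirror the methods in the proofs of Theorems~\ref{Thm:Normalise1Manifold} and~\ref{Thm:MinimalPosition}.

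The main loop is: first, search for a maximal generalised Reeb region. To do this, I would enumerate the bigon and trigon discs $D$ for $C$ (their number is polynomial in $||\calH||$ and $\mathrm{simp}(C)$ by the trigon analogue of Lemma~\ref{Lem:BoundBigon}, using Lemma~\ref{Lem:NumberNonParallelRegions} and the fact that each vertex of $P$ contributes a disc-region of bounded index), and for each $D$ decide whether its isotopy region is self-returning and sits inside a generalised Reeb region using \textsc{Cut along standard 1-manifold} (Theorem~\ref{Thm:CutAlong1Manifold}) and \textsc{Location of parallelity bundle component} (Theorem~\ref{Thm:LocationParBundle}) applied to $S \cut C$, exactly as in the closed-surface case at the end of the proof of Theorem~\ref{Thm:Normalise1Manifold}. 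If a maximal generalised Reeb region is found, perform the generalised Reeb isotopy, producing $C'$ with a distinguished $D'$ inside it; otherwise set $C'=C$ and pick any bigon/trigon $D'$. Next, perform the isotopy across the maximal enlargement of the isotopy region of $D'$ (the analogue of Lemma~\ref{Lem:IsotopyAcrossMaximalEnlargement} for bigons/trigons, obtained by the same AHT-based binary search as in that proof). If the bi-trigon number drops, return to the top of the loop; otherwise Lemma~\ref{Lem:BiTrigonDoesNotGoUp} guarantees a fresh arc of the new 1-manifold parallel to $D' \cap C'$, an outermost such arc bounds a new bigon/trigon, and we recurse on it. The analogues of Lemmas~\ref{Lem:SelfReturningHorizBigon}--\ref{Lem:ReebIsotopy} and Lemma~\ref{Lem:TrigonNotSelfReturning} ensure that at most $O(|\chi(S)| + e(P) + \mathrm{bi}\text{-}\mathrm{trigon}(C,P))$ such sub-iterations are needed before the bi-trigon number strictly decreases, so the whole loop terminates in a polynomial number of outer steps.

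The hard part is keeping both $\mathrm{simp}$ and $w$ under control throughout, so that the inputs to \textsc{Cut along standard 1-manifold}, \textsc{Isotopy across maximal enlargement} and the final normalisation stay polynomial-sized. I would prove a direct analogue of Lemma~\ref{Lem:IncreaseInSimp}: each isotopy across a maximal enlargement (of a bigon/trigon or of a generalised Reeb region) replaces some sub-arcs of $C$ by arcs normally parallel to a sub-arc of $P \cut C$ pushed into the $0$-handles and $1$-handles, and such a pushed arc meets $\calH^0$ in at most $O(||\calH||)$ components; hence $\mathrm{simp}$ increases by at most $O(||\calH||)$ per step and $w$ increases by at most $2||\calH||(|C_i \cap P| - |C_{i+1}\cap P|)$. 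Telescoping as in the proof of Theorem~\ref{Thm:MinimalPosition} gives the promised bound $w(C'') \leq w(C)(1+2||\calH||)$ and a polynomial bound on $\mathrm{simp}(C_i)$ at every stage. Once the loop terminates with a standard 1-manifold $C''$ in locally minimal position with $P$, I apply \textsc{Normalise a standard 1-manifold in a handle structure} (Theorem~\ref{Thm:Normalise1Manifold}) to produce $\overline{C}$; the direct analogue of Corollary~\ref{Cor:RemainsMinimalPosition} (with the same proof, since the small isotopies across simplifying discs take place in $S \cut P$ by the absence of bigons and trigons) guarantees that $\overline{C}$ remains in locally minimal position with respect to $P$, and $w(\overline{C}) \leq w(C'')$, completing the algorithm.
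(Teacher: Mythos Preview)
Your proposal is correct and follows essentially the same approach as the paper's proof: the same outer loop (search for a maximal generalised Reeb region, else pick any bigon/trigon; perform the isotopy across the maximal enlargement; recurse on the outermost parallel arc until the bi-trigon number drops), the same telescoping weight bound via an analogue of Lemma~\ref{Lem:IncreaseInSimp}, and a final normalisation step relying on the analogue of Corollary~\ref{Cor:RemainsMinimalPosition}. You are somewhat more explicit than the paper about the algorithmic detection of generalised Reeb regions and about bounding the bi-trigon number initially, but these are elaborations rather than deviations.
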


\begin{proof}
This follows the same format as the proof of Theorems \ref{Thm:Normalise1Manifold} and \ref{Thm:MinimalPosition}. 

First we search for a generalised Reeb region, arising from a bigon or trigon, and we perform the associated isotopy, creating a 1-manifold $C'$. If $C'$ has a bigon or trigon in the generalised Reeb region, denote it by $D'$. If there is no such disc, then the bi-trigon number has gone down, and we return to the start. If there was no generalised Reeb region, then we pick any bigon or trigon $D'$ and set $C' = C$. We then perform the isotopy of $C'$ across the maximal enlargement of the isotopy region associated with $D'$. If this reduces the bi-trigon number, then we return to the start of the loop. On the other hand, if the bi-trigon number is unchanged, then the new curve $C''$ has an arc of $C'' \cut P$ that runs parallel to $D' \cap C'$. An outermost such arc bounds a bigon or trigon. We repeat the above procedure with this disc. We continue until the bi-trigon number decreases, at which point we return to the start of the loop. This continues until there are no more bigons or trigons, at which point the resulting 1-manifold $C''$ is in locally minimal position with respect to $P$.

We claim that $C''$ has weight at most $w(C) (1 + 2 |\calT|)$ or $w(C)(1 + 2 ||\calH||)$. This is proved in the same way as in the argument for Theorem \ref{Thm:MinimalPosition}. Each isotopy performed above replaces sub-arcs of $C$ with new arcs. Each of the new arcs has weight at most $2|\calT|$ or $2||\calH||$ and it intersects $P$ strictly fewer times than the original arc. The inequalities in the proof of Theorem \ref{Thm:MinimalPosition} then give that the resulting 1-manifold $C''$ has weight at most $w(C) + 2 |C \cap P| \, |\calT|$ or $w(C) + 2 |C \cap P| \, ||\calH||$, and this is at most the required bound.

Finally, we normalise $C''$ using {\sc Normalise a standard 1-manifold in a handle structure} (Theorem \ref{Thm:Normalise1Manifold}) or {\sc Normalise standard 1-manifold in a triangulation} (Theorem \ref{Thm:NormaliseStandardCurves}). This takes polynomial time, and the resulting normal 1-manifold $\overline{C}$ has weight at most that of $C''$ and is still in locally minimal position with respect to $P$.
\end{proof}

\bibliography{AlgSurfaces-biblio}
\bibliographystyle{plain}

\end{document}